\DeclareMathOperator{\tr}{tr}
\DeclareMathOperator{\Gr}{Gr}
\DeclareMathOperator{\Stab}{Stab}
\newcommand{\idx}{\iota}
\newcommand{\gitsymbol}{\mathbin{
  \mathchoice{/\mkern-6mu/}
    {/\mkern-6mu/}
    {/\mkern-5mu/}
    {/\mkern-5mu/}}}
\m@th\displaystyle{##}$\hfil}
\m@th\displaystyle{##}$\hfil}
\newcommand{\gitPVG}{\mathbb PV\!\!\gitsymbol\!G}
\newtheorem{theorem}{Theorem}[section]
\newtheorem{corollary}[theorem]{Corollary}
\newtheorem{lemma}[theorem]{Lemma}
\theoremstyle{definition}
\newtheorem{definition}[theorem]{Definition}
\newtheorem{remark}[theorem]{Remark}
\newcommand{\diag}{{\rm diag}}
\newcommand{\R}{{\mathbb R}}
\newcommand{\PD} {{\rm PD}}
\newcommand{\mlt}{{\rm mlt}}
\newcommand{\Ten}{{\rm Ten}}
\newcommand{\Pp}{{\mathbb P}}
\newcommand{\C}{{\mathbb C}}
\newcommand{\F}{{\mathbb F}}
\newcommand{\Q}{{\mathbb Q}}
\newcommand{\Z}{{\mathbb Z}}
\newcommand{\Mat}{\operatorname{Mat}}
\newcommand{\GL}{\operatorname{GL}}
\newcommand{\SL}{{\rm SL}}
\newcommand{\semi}{{\rm ss}}
\newcommand{\ps}{{\rm ps}}
\newcommand{\st}{{\rm st}}
\newcommand{\End}{{\rm End}}
\newcommand{\ad}{\rm ad}
 \newcommand{\gmax}{g_{\max}}
\title{Maximum likelihood estimation for tensor normal models via castling transforms}
\author{Harm Derksen, Visu Makam, and Michael Walter}
\thanks{HD was partially supported by NSF grants IIS-1837985 and DMS-2001460. VM was partially supported by the University of Melbourne and by NSF grants DMS-1638352 and CCF-1900460. MW acknowledges NWO Veni grant no.~680-47-459.}
\begin{document}
\begin{abstract}
In this paper, we study sample size thresholds for maximum likelihood estimation for tensor normal models.
Given the model parameters and the number of samples, we determine whether, almost surely,
(1) the likelihood function is bounded from above,
(2) maximum likelihood estimates (MLEs) exist, and
(3) MLEs exist uniquely.
We obtain a complete answer for both real and complex models.
One consequence of our results is that almost sure boundedness of the log-likelihood function guarantees almost sure existence of an MLE.
Our techniques are based on invariant theory and castling transforms.
\end{abstract}
\maketitle

\tableofcontents

\section{Introduction}
A family of probability distributions is called a statistical model.
Maximum likelihood estimation is a method of estimating the true probability distribution as the one that maximizes the likelihood of the observed data.
The probability distribution (or often the point in an associated parameter space) that maximizes the likelihood is called a \emph{maximum likelihood estimate (MLE)}.
One important problem is to understand the minimal number of samples required such that, almost surely,
(1) the likelihood function is bounded from above,
(2) MLEs exist, and
(3) there is a unique MLE.
Surprising connections between sample size thresholds for a class of models called Gaussian group models and stability notions in invariant theory were recently discovered in~\cite{AKRS}.
In this paper, we study sample size thresholds for \emph{tensor normal models}, which fall under the purview of Gaussian group models and are hence amenable to techniques from invariant theory.
The setting of invariant theory that relates to tensor normal models are the so-called tensor actions, i.e., the natural action of the group $\SL_{d_1} \times \SL_{d_2} \times \dots \times \SL_{d_k}$ on $\F^{d_1} \otimes \F^{d_2} \otimes \dots \otimes \F^{d_k}$, where $\F$ is the underlying field (either~$\R$ or $\C$) and $\SL_{d_i}$ denotes the group of $d_i \times d_i$ matrices with determinant one.

Tensor normal models are statistical models consisting of multivariate Gaussian distributions whose concentration matrix is a Kronecker (or tensor) product of several matrices.
These are particularly useful in studying data that naturally occurs as multi-dimensional arrays.
Examples include wood density in given growth rings and directions at several heights in a tree trunk~\cite{KZ}, monitoring of a vector of physiological variables in different organs over multiple days~\cite{RL}, and $3$-dimensional spatial glucose content data~\cite{Man-etal}. Moreover, tensors are ubiquitous in big data applications.

A special case of tensor normal models is the \emph{matrix normal model}, where the concentration matrix is a Kronecker product of exactly two matrices.
Sample size thresholds for matrix normal models have been investigated in~\cite{Dut99,Ros,Srivastava,Drton-etal,ST,AKRS,DM-mle}.
In particular, a complete answer for matrix normal models was obtained in~\cite{DM-mle} with techniques from quiver representations.
We do not use quiver representations in this paper, but instead we use  castling transforms and results on stabilizers in general position.
It is worth mentioning that the invariant theory for tensor actions with two tensor factors (which corresponds to the matrix normal models) is well understood and we have efficient algorithms, see~\cite{GGOW,DM,IQS,IQS2,DM-arbchar,DM-oc,AZGLOW}, whereas the invariant theory gets significantly more difficult for three and more tensor factors, see~\cite{BGOWW,BFGOWW,DM-exp} for more details.

To find the MLE, one can use the so called flip-flop algorithm~\cite{Dut99,LZ1,LZ2,Werner} for matrix normal models and its natural generalizations to tensor normal models, which is closely related to a recent alternating minimization algorithm in the invariant theory of tensor actions~\cite{AKRS,BGOWW,FORW}.
In general, MLEs for Gaussian group models can be found using the geodesic optimization algorithms in~\cite{BFGOWW}.

A separate motivation for studying the questions in this paper comes from quantum information.
Here tensors describe the states of a quantum mechanical system, and our invariant theoretic results characterize the existence of states with certain prescribed marginals, see~\cite{Klyachko,EntPoly,Walter,BRVR,BRVRquantum} for details.

\subsection{Tensor normal models}\label{subsec:models intro}
Let $\F = \R$ or $\C$.
Let $\PD_n$ denote the cone of positive definite $n \times n$ matrices with entries in $\F$. For an $n$-dimensional centered Gaussian distribution with concentration matrix $\Psi \in \PD_n$, the density function is defined as
\[
f_\Psi(y) = \det \left( \frac{\Psi}{2 \pi} \right)^{1/2} e^{-y^\dag \Psi y},
\]
where $y^\dag$ denotes the adjoint (conjugate transpose) of $y$.

Given a subset $\mathcal{M} \subseteq \PD_n$, we define the corresponding \emph{Gaussian model} as the statistical model consisting of the distributions with concentration matrix~$\Psi \in \mathcal M$.
Then the likelihood function $L_Y\colon \mathcal M \rightarrow \R$ is, for $m$ samples specified by an $m$-tuple $Y = (Y_1,\dots,Y_m) \in (\F^n)^m$, given by
\[
L_Y(\Psi) = \prod_{i=1}^m f_\Psi(Y_i) = \det \left(\tfrac{\Psi}{2\pi}\right)^{m/2} e^{-\frac{1}{2} \sum_{i=1}^m Y_i^\dag \Psi Y_i}.
\]
The log-likelihood function is then (up to an additive constant)
\begin{equation}\label{eq:l_Y intro}
l_Y(\Psi) = \frac{m}{2} \log \det (\Psi)  - \frac{1}{2} {\rm Tr} \left(\Psi \sum_{i=1}^m Y_i Y_i^\dag \right).
\end{equation}
A \emph{maximum likelihood estimate (MLE)} given~$Y$ is a concentration matrix~$\hat{\Psi} \in \mathcal{M}$ that maximizes the likelihood of observing the data~$Y$, i.e., $l_Y(\hat{\Psi}) \geq l_Y(\Psi)$ for all $\Psi \in \mathcal{M}$.
For an MLE to exist, it is therefore necessary (but not necessarily sufficient) that~$l_Y$ is bounded from above.
Even when they exist, MLEs need not be unique.

For $d_1,\dots,d_k \in \Z_{>0}$, the Gaussian model $\mathcal{M}(d_1,\dots,d_k) = \{\Psi_1 \otimes \Psi_2 \otimes \dots \otimes \Psi_k\ |\ \Psi_i \in \PD_{d_i} \} \subseteq \PD_n$ (where $n = d_1 d_2 \cdots d_k$) is called a \emph{tensor normal model}.
When we want to differentiate between the real and the complex model, we will write $\mathcal{M}_\R(d_1,\dots,d_k)$ and $\mathcal{M}_\C(d_1,\dots,d_k)$ respectively.
For the tensor normal model $\mathcal{M}(d_1,\dots,d_k)$, a sample can not only be thought of as a vector of size~$n$, but also as a $k$-tensor with local dimensions $d_1,d_2,\dots,d_k$.
The latter viewpoint will be particularly useful.
Accordingly, we define $\F^{d_1,\dots,d_k} \coloneqq \F^{d_1} \otimes \F^{d_2} \otimes \dots \otimes \F^{d_k}$.
Then a sample for the tensor normal model $\mathcal{M}(d_1,\dots,d_k)$ is simply a point in the tensor space $\F^{d_1,\dots,d_k}$. We also write $\F^{d_1,\dots,d_k;m}$ for $(\F^{d_1,\dots,d_k})^{\oplus m}$.

\subsection{Main results on sample size thresholds}
Generalizing the quantity $R(d_1,\dots,d_k)$ defined in~\cite{BRVR}, we consider
\begin{align*}
  R(d_1,\dots,d_k;m) \coloneqq m \prod_{i=1}^k d_i + \sum_{n=1}^k (-1)^n \sum_{1\leq i_1 < \ldots < i_n \leq k} \gcd(d_{i_1},\dots,d_{i_n})^2,
\end{align*}
as well as the following two quantities:
\begin{align*}
  \gmax(d_1,\dots,d_k) \coloneqq \max_{i<j} \gcd(d_i,d_j), \qquad
  \Delta(d_1,\dots,d_k; m) \coloneqq m \prod_{i=1}^k d_i - 1 - \sum_{i=1}^k (d_i^2 - 1).
\end{align*}
By convention, $\gmax(d) = 1$ for any $d\in\Z_{>0}$.
Then all three quantities are invariant under leaving out dimensions equal to one.
The following theorem shows that these quantities precisely predict the almost sure behavior of the MLE.
By almost surely we mean that the stated property holds for all~$Y$ away from a subset of $\Ten(d_1,\dots,d_k)^{\oplus m} \cong (\F^n)^m$ of Lebesgue measure zero.

\begin{theorem}\label{thm:main}
Let $\F = \R$ or $\C$.
Consider $m$ samples $Y = (Y_1,\dots,Y_m)$ of the tensor normal model $\mathcal{M}(d_1,\dots,d_k)$.
Let $R = R(d_1,\dots,d_k;m)$, $\Delta = \Delta(d_1,\dots,d_k;m)$, and $\gmax = \gmax(d_1,\dots,d_k)$.~Then:
\begin{enumerate}
\item If $R > 0$, then almost surely an MLE exists. Furthermore:
\begin{itemize}
\item If $m \geq 2$, 
the MLE is almost surely unique if and only if $R > \gmax^2$ or $\gmax\!=\!1$.
\item If $m = 1$, 
the MLE is almost surely unique if and only if $\Delta \geq -1$.
\end{itemize}
\item If $R = 0$, then almost surely an MLE exists. It is almost surely unique if and only if~$\gmax\!=\!1$.
\item If $R < 0$, then the likelihood function is always unbounded from above.
\end{enumerate}
\end{theorem}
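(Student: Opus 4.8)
The plan is to translate the three statistical questions into invariant theory for the tensor action of $G = \SL_{d_1} \times \cdots \times \SL_{d_k}$ on $V = \F^{d_1,\dots,d_k;m}$, following the dictionary between Gaussian group models and stability established in~\cite{AKRS}. Writing a candidate concentration matrix as $\Psi = g^\dagger g$ with $g = g_1 \otimes \cdots \otimes g_k$, and using both $\sum_i Y_i^\dagger \Psi Y_i = \|g\cdot Y\|^2$ and the fact that $\det(g_1\otimes\cdots\otimes g_k)$ factors through the individual $\det(g_i)$, the maximization of $l_Y$ reduces, after separating the overall positive scale, to the norm-minimization problem $\inf_{g\in G}\|g\cdot Y\|^2$. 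The Kempf--Ness theorem then yields the correspondences: $l_Y$ is bounded from above $\Leftrightarrow$ $Y$ is semistable (that is, $0\notin\overline{G\cdot Y}$, so $Y$ lies outside the null cone); an MLE exists $\Leftrightarrow$ $Y$ is polystable (the orbit $G\cdot Y$ is closed); and the MLE is unique $\Leftrightarrow$ the stabilizer $\Stab_G(Y)$ has no noncompact part. The last point uses that a minimal-norm representative is determined up to left multiplication by a unitary, which leaves $\Psi=g^\dagger g$ unchanged, so distinct MLEs arise exactly from the noncompact (positive Hermitian) directions of $\Stab_G(Y)$. First I would show that almost-sure statements about $Y$ are properties of a single generic orbit, so that each of the three questions becomes a property of the action itself.

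For the existence/boundedness dichotomy I would identify $R(d_1,\dots,d_k;m)$ as the expected dimension of the locus of data whose associated tensor has all marginals proportional to the identity, equivalently the generic fiber dimension of the moment map over its central value; this is the $m$-sample refinement of the quantity from~\cite{BRVR}. Establishing that this expected dimension is exact shows that $R\ge 0$ forces the existence of a generic polystable point with scalar marginals, giving existence of the MLE in parts~(1) and~(2), while $R<0$ forces every point into the null cone, so $l_Y$ is always unbounded, giving part~(3). The engine for this is the castling transform, which replaces some dimension $d_j$ by $m\prod_{i\neq j}d_i - d_j$ when the latter is smaller and relates the two actions by a Grassmann-type duality preserving semistability, polystability, and the generic stabilizer up to a controlled correction; I would check that $R$ is castling-invariant and reduce to a finite list of base tuples where polystability can be decided directly.

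The uniqueness analysis is where stabilizers in general position do the real work. After reducing by castling to base cases, I would compute the generic stabilizer of a polystable tensor and isolate its noncompact part. The expectation is that when $\gmax = \gcd(d_i,d_j) > 1$ the two legs $i,j$ carry a residual $\GL_{\gmax}$-type symmetry whose positive Hermitian directions persist precisely when $R \le \gmax^2$; this produces the criterion ``$R > \gmax^2$ or $\gmax = 1$'' for $m\ge 2$, with the boundary case $R=0$ reducing correctly to $\gmax = 1$. The separate single-sample criterion $\Delta \ge -1$ comes from the projective picture: with $m=1$ the data is one tensor, and the MLE is unique $\Leftrightarrow$ its image in $\Pp V$ has finite stabilizer modulo scalars, which is exactly the dimension count $\dim\Pp V \ge \dim G - 1$, i.e. $\Delta\ge -1$. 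I would again verify that $\Delta$ and $\gmax$ behave compatibly under castling so that the base-case computations propagate.

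The hardest part, I expect, is the precise determination of the noncompact part of the generic stabilizer and its control through the castling recursion, together with the demand that the answer be identical over $\R$ and $\C$. Stabilizers in general position can differ between the two fields, and the compact-versus-noncompact dichotomy that uniqueness actually detects is a statement about real forms; reconciling this with the field-independent thresholds $R,\Delta,\gmax$, and in particular checking that the boundary cases $R=0$ and $R=\gmax^2$ land on the correct side of the uniqueness criterion, is the delicate heart of the argument.
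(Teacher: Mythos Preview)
Your high-level architecture matches the paper: translate to stability via the AKRS dictionary, use castling transforms to reduce to minimal data, and verify that $R$, $\Delta$, $\gmax$ are castling-invariant so that a finite base-case analysis suffices. The paper does exactly this, proving the invariant-theoretic Theorem~\ref{thm:main-inv} first and then reading off Theorem~\ref{thm:main}.

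There are, however, two places where your plan is missing the key technical input and would not go through as written.

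\textbf{Polystability when $R\ge 0$.} You propose to interpret $R$ as the expected fibre dimension of the moment map at the central value and conclude that $R\ge 0$ forces a generic polystable point. An expected-dimension count of this kind gives only a \emph{necessary} condition; it does not by itself rule out that the moment-map fibre is empty or that generic orbits fail to be closed. The paper's mechanism is different: after castling to a minimal datum with $d_k\le \tfrac12 m d_1\cdots d_{k-1}$, one checks that the Dynkin index of $V$ with respect to every simple factor $\SL_{d_i}$ is at least $1$, and then invokes Elashvili's criterion (Theorem~\ref{thm:elashvili}) to conclude that the stabilizer in general position has toral Lie algebra, hence is reductive, hence $V$ is generically polystable by Popov's theorem. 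The square case $d_k = m d_1\cdots d_{k-1}$ is handled directly by identifying $V$ with $\Mat_{d_k,d_k}$. Your moment-map heuristic is compatible with this but does not replace it.

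\textbf{The uniqueness criterion.} Your derivation of the uniqueness threshold is not correct. For $m=1$ you write that finite stabilizer ``is exactly the dimension count $\dim\Pp V\ge\dim G-1$''; this is only the necessary direction. Sufficiency is genuinely nontrivial and in the paper comes from the Andreev--Vinberg--Elashvili index criterion (Theorem~\ref{thm:ave}) together with Elashvili's classification of the exceptional irreducible cases (Theorem~\ref{thm:elashvili classification}), which singles out $(2,d,d;1)$ as the unique minimal datum with $\Delta=-2$. For $m\ge 2$, the ``residual $\GL_{\gmax}$-type symmetry on legs $i,j$'' you posit does not exist in general; there is no such subgroup of $\SL_{d_i}\times\SL_{d_j}$ acting on a generic tensor. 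What actually happens is that the inequality $R\ge\gmax^2$ is proved directly (Lemma~\ref{lem:R minimal} and the proof of Theorem~\ref{thm:main-inv}), with equality forcing the minimal datum $(d,d;2)$, whose s.g.p.\ is computed by hand to be a $(d-1)$-dimensional torus. So both branches of the uniqueness criterion are obtained by reducing to an explicit short list of minimal data and computing their stabilizers, not by a structural $\gmax$-mechanism.

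Finally, for $\F=\R$ you correctly flag the difficulty. The paper's resolution is concrete: it shows that castling preserves the property ``a Euclidean-open set of points has noncompact stabilizer'' (Corollary~\ref{lem:castle compact}), and then exhibits such an open set in each of the three minimal non-stable polystable data by explicit construction. This is what pins down the real case and should replace the vaguer ``noncompact part of the generic stabilizer'' language in your plan.
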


\begin{remark}
It was conjectured in~\cite{Drton-etal} and proved in~\cite{DM-mle} that for matrix normal models (tensor normal models with $k=2$), almost sure boundedness of the log-likelihood function implies almost sure existence of an MLE.
Theorem~\ref{thm:main} implies that the same holds for all tensor normal models.
\end{remark}

From Theorem~\ref{thm:main}, we can extract the following result.
Let us denote by $\mlt_b$ (resp.\ $\mlt_e$, $\mlt_u$) the smallest integer~$m_0$ such that, for all $m \geq m_0$, the log-likelihood function for $Y \in \F^{d_1,\dots,d_k;m}$ is almost surely bounded from above (resp.\ MLEs exist, the MLE exists uniquely).

\begin{corollary}\label{cor:threshold}
Let $\F = \R$ or $\C$.
Consider the tensor normal model $\mathcal{M}(d_1,\dots,d_k)$.
Without loss of generality, assume $2 \leq d_1 \leq d_2 \leq \dots \leq d_k$, and assume $k \geq 3$.
Let $r = \frac{d_k}{d_1d_2\cdots d_{k-1}}$.
Then
\[
\lceil r \rceil \leq \mlt_b = \mlt_e \leq \mlt_u \leq \lceil r \rceil + 1.
\]
\end{corollary}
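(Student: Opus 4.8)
The plan is to read all four thresholds off Theorem~\ref{thm:main} by analysing the single function $m\mapsto R(d_1,\dots,d_k;m)$. Write $D:=\prod_{i=1}^k d_i$ and
\[
T:=\sum_{\emptyset\neq I\subseteq\{1,\dots,k\}}(-1)^{|I|+1}\gcd\big((d_i)_{i\in I}\big)^2 ,
\]
so that $R(d_1,\dots,d_k;m)=mD-T$ is affine and strictly increasing in $m$ with slope $D>0$. By parts (1)--(3) of Theorem~\ref{thm:main}, the log-likelihood is almost surely bounded and an MLE almost surely exists exactly when $R\geq0$, so by monotonicity
\[
\mlt_b=\mlt_e=\min\{m:mD\geq T\}=\lceil T/D\rceil ,
\]
while $\mlt_e\le\mlt_u$ is immediate since unique existence implies existence. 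As $r=d_k^2/D$, the corollary reduces to the two estimates $T\geq d_k^2$ (for $\lceil r\rceil\le\mlt_e$) and control of $R$ at $m_0:=\lceil r\rceil+1$ (for $\mlt_u\le\lceil r\rceil+1$).

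First I would obtain a closed form for $T$ via the second Jordan totient $J_2$, characterised by $\sum_{g\mid n}J_2(g)=n^2$ and satisfying $J_2(g)\ge1$. Substituting $\gcd\big((d_i)_{i\in I}\big)^2=\sum_{g\mid d_i\,\forall i\in I}J_2(g)$ and interchanging summations, the alternating inner sum over nonempty $I$ telescopes to $1-\prod_{i=1}^k(1-[g\mid d_i])$, which is $1$ when $g$ divides some $d_i$ and $0$ otherwise, giving
\[
T=\sum_{g}J_2(g),\qquad g\ \text{ranging over positive integers dividing at least one }d_i .
\]
Positivity of $J_2$ and $\sum_{g\mid d_k}J_2(g)=d_k^2$ yield $T\ge d_k^2$, hence $T/D\ge r$ and $\mlt_e=\lceil T/D\rceil\ge\lceil r\rceil$. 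The same formula gives $T=d_k^2+\sum_{g\nmid d_k}J_2(g)\le d_k^2+\sum_{i=1}^{k-1}\big(d_i^2-\gcd(d_i,d_k)^2\big)=:d_k^2+E$, since every $g$ in the correction divides some $d_i$ with $i<k$.

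For the existence threshold I would bound $E\le\sum_{i<k}d_i^2\le(k-1)d_{k-1}^2\le2^{k-2}d_{k-1}^2\le D$, using $d_k\ge d_{k-1}$, $\prod_{i\le k-2}d_i\ge2^{k-2}$ and $k-1\le2^{k-2}$; thus $T\le d_k^2+D$ and $\mlt_e\le\lceil r\rceil+1$, which with the previous paragraph pins $\mlt_b=\mlt_e\in\{\lceil r\rceil,\lceil r\rceil+1\}$. The uniqueness bound splits on $\gmax$, noting $\lceil r\rceil+1\ge2$. If $\gmax=1$, Theorem~\ref{thm:main} makes uniqueness equivalent to existence for every $m\ge2$; since $R(m)\ge0$ for all $m\ge\mlt_e$ and $\mlt_e\le\lceil r\rceil+1$, the MLE is unique for all $m\ge\lceil r\rceil+1$, so $\mlt_u\le\lceil r\rceil+1$. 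If $\gmax>1$, monotonicity reduces everything to the single inequality $R(m_0)>\gmax^2$; as $m_0D\ge(r+1)D=d_k^2+D$ we get $R(m_0)\ge D-E$, so it suffices to prove the elementary bound $E+\gmax^2<D$.

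The main obstacle is exactly this last inequality, the difficulty being that crude size bounds are sharp precisely at the corner $d=(2,\dots,2)$, where the gcd cancellations make $E$ collapse. For $k\ge4$ I would combine $\gmax\le d_{k-1}$ with $E\le\sum_{i<k}d_i^2-(k-1)$ (each $\gcd(d_i,d_k)\ge1$) to get $E+\gmax^2\le kd_{k-1}^2-(k-1)<kd_{k-1}^2\le2^{k-2}d_{k-1}^2\le D$, the first strict step coming from the subtracted $(k-1)$ and the last from $k\le2^{k-2}$. The case $k=3$ resists these estimates, so I would instead use the exact value of $T$: with $a,b,c$ the three pairwise gcds and $e=\gcd(d_1,d_2,d_3)$ one computes $d_3^2+D-T-\gmax^2=(d_1d_2d_3-d_1^2-d_2^2)+(a^2+b^2+c^2-e^2-\gmax^2)$, where the first bracket is $\ge0$ since $d_1d_2d_3\ge d_1d_2^2\ge d_1^2+d_2^2$, and the second is $\ge1$ since $e\mid a,b,c$ forces $e^2\le\min(a^2,b^2,c^2)$; hence $R(m_0)\ge d_3^2+D-T>\gmax^2$. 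Assembling these pieces delivers $\lceil r\rceil\le\mlt_b=\mlt_e\le\mlt_u\le\lceil r\rceil+1$.
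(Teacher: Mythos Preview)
Your proof is correct, but it follows a genuinely different route from the paper's. The paper argues entirely through Theorem~\ref{thm:recursive} (the recursive characterisation): the lower bound $\lceil r\rceil\le\mlt_b$ comes immediately from part~(1) (if $m<r$ then $d_k>md_1\cdots d_{k-1}$ and the action is unstable), and the upper bound $\mlt_u\le\lceil r\rceil+1$ is obtained by performing a single castling transform at $m=\lceil r\rceil+1$ to land in part~(4), then ruling out the two exceptional minimal data $(2,d,d;1)$ and $(d,d;2)$ using $c+1\ge2$ and $k\ge3$. No arithmetic with $R$ or gcds is needed beyond that.

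You instead work exclusively with Theorem~\ref{thm:main} and reduce the corollary to elementary estimates on $T=\sum_{\emptyset\neq I}(-1)^{|I|+1}\gcd(d_I)^2$. Your key device is the identity $T=\sum_{g\,:\,g\mid d_i\text{ for some }i}J_2(g)$ via the second Jordan totient, which cleanly gives both $T\ge d_k^2$ and the correction term $E\le\sum_{i<k}(d_i^2-\gcd(d_i,d_k)^2)$. The remaining work is the inequality $E+\gmax^2<D$, which you handle by size bounds for $k\ge4$ and an exact computation for $k=3$. This is longer than the paper's argument but has the virtue of being a direct, purely arithmetic deduction from the uniform statement (Theorem~\ref{thm:main}), without reopening the castling machinery; the Jordan totient formula for $T$ is also an attractive observation in its own right and connects with the interpretation of $\mathcal Z(d_1^2,\dots,d_k^2)$ noted after Lemma~\ref{lem:R minimal}.
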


We note that for the case $k = 2$, a complete answer is known~\cite{DM-mle}.
The case $k = 1$ is trivial.
Corollary~\ref{cor:threshold} gives nearly tight bounds on sample size thresholds.
However, we note that for any particular choice of $d_1,\dots,d_k$, we can always use Theorem~\ref{thm:main} to get exact sample size thresholds.

\subsection{Main results in invariant theory}
Recently, Am\'endola, Kohn, Reichenbach and Seigal~\cite{AKRS} established a connection between a class of Gaussian models called Gaussian group models and the invariant theory of a corresponding group action (see Theorem~\ref{theo:AKRS}).
We revisit this connection in Section~\ref{sec:gg-inv}.
As mentioned previously, the group action that corresponds to tensor normal models is the tensor action.
Given natural numbers $d_1,\dots,d_k,m\in \Z_{>0}$, we denote by $\rho_{d_1,\dots,d_k;m}$ the natural representation of $G = \SL_{d_1}(\F) \times \cdots \times \SL_{d_k}(\F)$ on $V = \F^{d_1,\dots,d_k;m}$.
Theorem~\ref{thm:main} is a consequence of the following invariant-theoretic result (see Section~\ref{sec:gg-inv} for the definitions of stability).

\begin{theorem}\label{thm:main-inv}
Let $\F = \R$ or $\C$.
Consider the tensor representation $\rho = \rho_{d_1,\cdots,d_k;m}$.
Let $R = R(d_1,\dots,d_k;m)$, $\Delta = \Delta(d_1,\dots,d_k;m)$, and $\gmax = \gmax(d_1,\dots,d_k)$.
Then:
\begin{enumerate}
\item If $R > 0$, then $\rho$ is generically polystable. Furthermore:
\begin{itemize}
\item If $m\geq2$, then $R \geq \gmax^2$, and $\rho$ is generically stable if and only if $R > \gmax^2$ or~$\gmax=1$.
\item If $m = 1$, then $\Delta \geq -2$, and $\rho$ is generically stable if and only if $\Delta \geq -1$.
\end{itemize}
\item If $R = 0$, then $\rho$ is generically polystable. It is generically stable if and only if~$\gmax = 1$.
\item If $R < 0$, then $\rho$ is unstable.
\end{enumerate}
\end{theorem}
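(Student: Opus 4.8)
The plan is to prove the trichotomy by reducing, through castling transforms, to a finite list of base cases, and to settle the exact thresholds there by computing stabilizers in general position.

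First I would set up castling for the tensor action, writing $V=\F^{d_1}\otimes\cdots\otimes\F^{d_k}\otimes\F^m$ with the multiplicity factor $\F^m$ carried along trivially. Castling on the $j$-th leg replaces $d_j$ by $d_j'=m\prod_{i\neq j}d_i-d_j$ (keeping the other $d_i$ and $m$ fixed, and assuming $0<d_j'$), and strictly decreases $\dim V=m\,d_j\prod_{i\neq j}d_i$ whenever $2d_j>m\prod_{i\neq j}d_i$. I would then check that $R$, $\Delta$, and $\gmax$ are invariant under this move. This is pure $\gcd$ bookkeeping: with $D=\prod_{i\neq j}d_i$ every $d_i$ ($i\neq j$) divides $D$, so $\gcd(\{d_s\}_{s\in S},d_j')=\gcd(\{d_s\}_{s\in S},d_j)$ for every nonempty $S\not\ni j$; hence all multi-index $\gcd$ terms except the singleton $\{j\}$ are unchanged, and the change $-(d_j'^2-d_j^2)$ in the singleton term cancels the change $m D\,(d_j'-d_j)$ in the leading term $m\prod_i d_i$, giving $R'=R$. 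The same cancellation shows $\dim V-\dim G$, hence $\Delta$, is preserved, and $d_i\mid D$ gives $\gmax'=\gmax$.

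Next I would show castling preserves each of generic stability, generic polystability, and instability. This is the representation-theoretic core and is where results on stabilizers in general position enter: the generic isotropy subgroups on the two sides of a castling transform are isomorphic as algebraic groups, and the correspondence matches closed orbits with closed orbits and the null cone with the null cone. The feature beyond the classical Sato--Kimura/Vinberg picture is the trivial multiplicity factor and the use of $\SL$ rather than $\GL$; I would phrase castling for $\bigl(\prod_{i\neq j}\SL_{d_i}\bigr)\times\SL_{d_j}$ acting on $W\otimes\F^{d_j}$ with $W=\bigl(\bigotimes_{i\neq j}\F^{d_i}\bigr)\otimes\F^m$ and verify the isotropy and orbit-closure statements in this form. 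For $\F=\R$ I would additionally invoke Kempf--Ness for real reductive groups, as in the cited references, so that real polystability and stability can be tested after complexification and the castling correspondence respects the real structure.

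With invariance in hand, I would castle down every leg with $2d_j>m\prod_{i\neq j}d_i$ until reaching a castling-reduced tuple (the process terminates since $\dim V$ strictly decreases), and prove the theorem directly there. In the reduced range the dimensions are small and the generic stabilizer is computable: I expect $R\ge 0$ to be equivalent to the generic orbit being closed, with the sign of $R$ detecting polystability and $R$ controlling the dimension of the generic fibre of the moment map, while $R<0$ forces the null cone to be all of $V$ (instability). For the finer stable-versus-polystable distinction I would analyze the stabilizer of a polystable representative. A pair of legs sharing $\gcd(d_i,d_j)=g$ supports a sub-configuration that both forces the floor $R\ge\gmax^2$ (for $m\ge2$) and, exactly when $R=\gmax^2$ and $g>1$, produces a positive-dimensional stabilizer; I would exhibit these extra stabilizer directions at the boundary $R=\gmax^2$ and show they disappear as soon as $R>\gmax^2$, yielding stability $\iff R>\gmax^2$ or $\gmax=1$. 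The single-tensor case $m=1$ is governed instead by $\Delta=\dim V-\dim G-1$ together with the known classification of generic stabilizers of individual tensors, giving stability $\iff\Delta\ge -1$.

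The routine part is the numerical invariance. The substantive obstacles are, first, proving the castling correspondence for all three stability notions cleanly in the $\SL$-plus-multiplicity setting and over $\R$; and, above all, the base-case computation of the generic stabilizers of the castling-reduced tensor representations, precise enough to pin down the exact thresholds $R=0$, $R=\gmax^2$, and $\Delta=-1$. I expect the latter to be the crux, since it is where the arithmetic of the $\gcd$'s must be matched to the group-theoretic structure of the stabilizer, and the boundary cases of equality will need the most care.
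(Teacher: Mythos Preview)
Your overall architecture matches the paper's proof: reduce by castling to a minimal datum, verify that $R$, $\Delta$, $\gmax$ are castling-invariant (your $\gcd$ bookkeeping is correct), and analyze the minimal cases. The castling correspondence for the three stability notions is exactly Corollary~\ref{cor:castling}, and the passage from $\C$ to $\R$ is done as you indicate.

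The gap is in the base-case analysis. Castling down does \emph{not} reach a finite list with ``small dimensions'': a minimal datum with $d_1\le\cdots\le d_k$ lands in one of three infinite regions, $d_k>N$, $d_k=N$, or $d_k\le N/2$ with $N=md_1\cdots d_{k-1}$ (Lemma~\ref{lem:min-castle}), and the last region contains arbitrarily large representations. Two ingredients you have not identified do the real work. First, a numerical lemma (Lemma~\ref{lem:R minimal}) shows that these three regions correspond exactly to $R<0$, $R=0$, $R>0$; this is where the alternating $\gcd$ sum is actually controlled, and it is a nontrivial estimate rather than a stabilizer computation. Second, in the region $d_k\le N/2$ the paper does not compute stabilizers case by case but applies the Andreev--Vinberg--Elashvili index criterion (Theorems~\ref{thm:ave} and~\ref{thm:elashvili}): the index of $V$ over each simple factor $\SL_{d_i}$ is $md_1\cdots d_k/(2d_i^2)\ge 1$, yielding generic polystability immediately and generic stability unless the minimal index equals~$1$, i.e., $d_k=N/2$. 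That boundary is then settled by Elashvili's classification (Theorem~\ref{thm:elashvili classification}), which, after promoting $m$ to an extra tensor leg, singles out $(2,d,d;1)$ and $(d,d;2)$ as the only non-stable minimal data. The inequality $R\ge\gmax^2$ for $m\ge2$ and the characterization of equality are proved by direct numerical estimates on the minimal datum, and the positive-dimensional stabilizer at the threshold is read off from the explicit s.g.p.\ of these two families, not from a $\gcd$-based ``sub-configuration'' as you suggest.
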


While the preceding theorem gives a nice and uniform characterization, it is essentially a reformulation of the following result which is \emph{recursive} in nature, but more enlightening.

\begin{theorem}\label{thm:recursive}
Let $\F = \R$ or $\C$.
Consider the tensor representation $\rho = \rho_{d_1,\cdots,d_k;m}$.
Without loss of generality, assume $d_1 \leq d_2 \leq \dots \leq d_k$.
Then:
\begin{enumerate}
\item If $d_k > d_1 \cdots d_{k-1} m$, then $\rho$ is not generically semistable.
\item If $d_k = d_1 \cdots d_{k-1} m$, then $\rho$ is generically polystable.
It is generically stable if and only if $d_1 = \cdots = d_{k-1} = 1$.
\item\label{it:castling} If $\frac{d_1 \cdots d_{k-1} m}2 < d_k < d_1 \cdots d_{k-1} m$, then $\rho$ is generically semistable (polystable, stable) if and only if the same is true if we replace~$d_k$ by $d'_k = d_1 \cdots d_{k-1} m - d_k$.
Note that $1 \leq d'_k < d_k$.
\item If $d_k \leq \frac{d_1 \cdots d_{k-1} m}2$, then $\rho$ is generically polystable.
Further, it is not generically stable if and only if $(d_1,\dots,d_k;m) =(1,\dots,1,2,d,d;1)$ or $(1,\dots,1,1,d,d;2)$ for some $d\geq2$.
\end{enumerate}
Moreover, if $\rho$ is not generically semistable then it is unstable.
\end{theorem}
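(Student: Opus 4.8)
The plan is to treat the four cases as a single recursion driven by the castling transform, with Cases (1), (2) and (4) as base cases and Case (3) as the recursive step. Throughout I group the first $k-1$ tensor factors together with the $m$ copies, writing $V = \F^{d_k}\otimes W$ where $W = (\F^{d_1}\otimes\cdots\otimes\F^{d_{k-1}})^{\oplus m}$ has dimension $p \coloneqq d_1\cdots d_{k-1}m$, and $G = \SL_{d_k}\times H'$ with $H' = \SL_{d_1}\times\cdots\times\SL_{d_{k-1}}$ acting on $W$. The four cases are precisely the four regimes of $d_k$ relative to $p$: namely $d_k>p$, $d_k=p$, $p/2<d_k<p$, and $d_k\le p/2$. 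Since each application of Case (3) strictly decreases the positive integer $\prod_i d_i$ (because $d'_k<d_k$) before one re-sorts, the recursion terminates in one of the base regimes; hence it suffices to (i) prove the castling equivalence and (ii) settle the three base regimes.

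First I would dispose of Case (1) together with the final clause. When $d_k>p$, the flattening $W^*\to\F^{d_k}$ of any tensor $T$ has rank at most $p<d_k$, so $T$ lies in $U_T\otimes W$ for a proper subspace $U_T\subsetneq\F^{d_k}$. Choosing coordinates adapted to $U_T$, a one-parameter subgroup of $\SL_{d_k}$ with positive weights on $U_T$ and compensating negative weights off $U_T$ sends $T$ to $0$ in the limit; by Hilbert--Mumford every $T$ is then unstable, so $\rho$ is unstable, in particular not generically semistable. For the final assertion I would use that the semistable locus $V^{\semi}$ is Zariski-open and $G$-stable in the irreducible space $V$, hence either empty or dense; therefore failure of generic semistability forces $V^{\semi}=\emptyset$, i.e.\ $\rho$ is unstable. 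Over $\R$ I would deduce this by comparison with the complexification, as set up in \S\ref{sec:gg-inv}.

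The core of the argument, and the step I expect to be hardest, is the castling equivalence of Case (3). Identifying $\F^{p-d_k}\otimes W^*\cong(\F^{d_1}\otimes\cdots\otimes\F^{d_{k-1}}\otimes\F^{d'_k})^{\oplus m}$ with $d'_k=p-d_k$, and noting that dualizing each $\SL_{d_i}$-factor is an outer automorphism that preserves all stability notions, the claim reduces to matching the generic stability behavior of the $\SL_{d_k}\times H'$-action on $\F^{d_k}\otimes W$ with that of the $\SL_{d'_k}\times H'$-action on $\F^{d'_k}\otimes W^*$. I would set up the classical castling correspondence: a generic $T$ determines a $d_k$-dimensional subspace $U\subseteq W$, its annihilator $U^\circ\subseteq W^*$ has dimension $p-d_k$ and is the subspace attached to the corresponding generic tensor on the other side, and this assignment is $H'$-equivariant since $h\,U=U\iff h\,U^\circ=U^\circ$. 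From this I would show the generic stabilizers are isomorphic—the $H'$-parts coincide and the two $\SL$-factors are pinned down identically by the frame, using that $H'$ acts on $W$ with determinant one so the determinant-one constraints match—giving preservation of generic stability via finiteness of the stabilizer. Preservation of generic polystability (closedness of the orbit) and of semistability I would obtain from the Kempf--Ness/moment-map picture, matching critical points and null cones across the correspondence. Carefully transporting all three notions across a transform that changes $\dim\SL_{d_k}=d_k^2-1$ to $\dim\SL_{d'_k}=(d'_k)^2-1$, and handling the real field, is the main technical obstacle, and it is where the cited results on stabilizers in general position enter.

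Finally I would settle the base regimes. In Case (2), $d_k=p$ and a generic flattening is an isomorphism, so $\det$ of the flattening is a nonvanishing $G$-invariant (using that each factor acts with determinant one), giving generic semistability; moreover $\SL_{d_k}$ alone acts transitively on the generic fiber $\{\det=c\}$, which is closed, so the generic orbit is closed and $\rho$ is generically polystable. A dimension count gives a generic stabilizer of dimension $\sum_{i<k}(d_i^2-1)=\dim H'$, finite exactly when $d_1=\cdots=d_{k-1}=1$, yielding the stated criterion. In Case (4), $d_k\le p/2$, I would prove generic polystability by exhibiting a zero of the moment map in the generic orbit—this ``subcritical'' balance is precisely what makes the marginals simultaneously normalizable, in the quantum-marginal spirit of the introduction—and then characterize generic stability by computing the stabilizer in general position, invoking the classification of tensor actions with positive-dimensional generic stabilizer to isolate exactly the exceptional families $(1,\dots,1,2,d,d;1)$ and $(1,\dots,1,d,d;2)$. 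Pinning down this short exceptional list is the delicate point of Case (4).
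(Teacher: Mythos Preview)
Your overall architecture matches the paper's: Cases (1), (2), (4) as base cases, Case (3) as the castling recursion. Cases (1) and (2) and the final clause are essentially what the paper does. Two points, however, deserve attention.

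\textbf{Case (3), polystability and semistability.} You have the stabilizer isomorphism across the castling correspondence, which gives preservation of generic stability. But your plan to transport polystability and semistability via ``matching critical points and null cones'' in the Kempf--Ness picture is underdeveloped: the castling map lives on the Grassmannian, not on the linear spaces $\F^{d_k}\otimes W$ and $\F^{d'_k}\otimes W^*$, so there is no evident map along which to compare norms or moment maps. The paper sidesteps this entirely. For semistability it uses the (Sato--Kimura) fact that the two invariant rings are canonically isomorphic, so one is nontrivial iff the other is. For polystability it invokes Popov's criterion: a representation of a reductive group is generically polystable iff the stabilizer in general position is reductive. Since you already have that the s.g.p.'s are isomorphic, this gives polystability preservation for free, with no analysis of orbit closures. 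This is the cleaner route and I recommend it.

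\textbf{Case (4), generic polystability.} Here there is a genuine gap. ``Exhibiting a zero of the moment map in the generic orbit'' is a restatement of the claim, not a method: finding such a zero is precisely the content of generic polystability, and for a general $(d_1,\dots,d_k;m)$ with $d_k\le p/2$ no explicit construction is apparent. The paper instead uses the Andreev--Vinberg--Elashvili/Elashvili \emph{index} criterion. For $G=\prod_i\SL_{d_i}$ acting on $V=\C^{d_1,\dots,d_k;m}$, the index with respect to the simple factor $\SL_{d_i}$ is $\iota_{\SL_{d_i}}(V)=\tfrac{m\,d_1\cdots d_k}{2d_i^2}$; the smallest is the one for $\SL_{d_k}$, equal to $\tfrac{m\,d_1\cdots d_{k-1}}{2d_k}$. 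The hypothesis $d_k\le p/2$ is exactly $\iota_{\SL_{d_k}}(V)\ge 1$, hence all indices are $\ge 1$. Elashvili's theorem then says the s.g.p.\ has the Lie algebra of a torus, in particular is reductive, so Popov's criterion yields generic polystability. When all indices are $>1$ (i.e.\ $d_k<p/2$) the AVE theorem gives a finite s.g.p.\ and hence generic stability outright; the only subtle case is the boundary $d_k=p/2$, where Elashvili's classification of irreducible representations with nontrivial s.g.p.\ (applied after absorbing $m$ as an extra $\SL_m$ factor) pins down the exceptional families you list. Your proposal correctly anticipates that a classification is needed for the exceptional list, but it is the index criterion, not a direct moment-map construction, that supplies the polystability and dispatches the non-boundary cases.
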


\noindent
Part~(\ref{it:castling}) of Theorem~\ref{thm:recursive} above is a reflection of the fact that the property of being generically semistable (polystable, stable) is unchanged under an operation known as a \emph{castling transform}.
Castling transforms played a crucial role in Sato and Kimura's classification of prehomogeneous vector spaces~\cite{SK} (see also~\cite{Venturelli}).
Its origins can be traced back to at least Elashvili's paper~\cite{Elashvili}.

As a corollary of Theorems~\ref{thm:main-inv} and~\ref{thm:recursive}, we can derive a formula for the dimension of the GIT quotient (see Section~\ref{sec:git quotient} for definition) of $V = \F^{d_1,\dots,d_k;m}$ for the action of $G = \SL_{d_1} \times \dots \times \SL_{d_k}$.
This generalizes the result of~\cite{BRVR}, where the dimension was computed in the case that $m=1$.

\begin{theorem}\label{thm:git}
Let $\F=\C$.
Consider the natural action of $G = \SL_{d_1} \times \cdots \times \SL_{d_k}$ on $V = \F^{d_1,\dots,d_k;m}$.
Let $\delta$ denote the dimension of the GIT quotient $\gitPVG$.
\begin{enumerate}
\item If $R < 0$, then 
the GIT quotient is empty.
\item If $R = 0$, then $\delta = 0$. In fact, the GIT quotient is a single point.
\item If $R > 0$, then
\begin{align*}
\delta = \begin{cases}
\max(\gmax-3,0) & \text{ if $m = 1$ and $\Delta = -2$}, \\
\gmax & \text{ if $m = 2$ and $R = \gmax^2 > 1$}, \\
\Delta & \text{ otherwise}.
\end{cases}
\end{align*}
\end{enumerate}
\end{theorem}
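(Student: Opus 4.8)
The plan is to read off $\delta$ from the dimension of a generic orbit, using the polystability supplied by Theorem~\ref{thm:main-inv} to control the GIT quotient. Write $S$ for the stabilizer in general position of the $G$-action on $V$. When $R\ge 0$, Theorem~\ref{thm:main-inv} guarantees that $\rho$ is generically polystable, so a generic $v\in V$ has closed orbit of (maximal) dimension $\dim G-\dim S$; since the quotient map separates closed orbits and its generic fibre is exactly this orbit, $\dim(V\gitsymbol G)=\dim V-\dim G+\dim S$. As $G=\prod_i\SL_{d_i}$ has no nontrivial characters, every semi-invariant is invariant and $\C[V]^G$ is the full graded invariant ring; the $\mathbb{G}_m$-scaling on the affine cone then gives $\delta=\dim\gitPVG=\dim(V\gitsymbol G)-1$ whenever nonconstant invariants exist. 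Since $\dim V-\dim G=\Delta+1$, this yields the master formula
\[
\delta=\Delta+\dim S .
\]
When $R<0$, Theorem~\ref{thm:main-inv} says $\rho$ is unstable, so $\C[V]^G=\C$ and $\gitPVG=\emptyset$, which is case~(1).

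Next I would determine $\dim S$ from the stability classification. By Theorem~\ref{thm:main-inv}, for $R>0$ the action is generically stable---whence $\dim S=0$ and $\delta=\Delta$---except in the two families (i) $m=1$, $\Delta=-2$ and (ii) $m\ge 2$, $R=\gmax^2>1$, and the recursive description in Theorem~\ref{thm:recursive}\,(4) shows that the latter forces $m=2$. For the case $R=0$ I would instead argue directly: by Theorem~\ref{thm:recursive}\,(2), after a castling transform we may assume the balanced situation $d_k=d_1\cdots d_{k-1}m$, where $\C[V]^G$ is generated by a single determinant-type invariant (the determinant of the flattening into a square matrix); hence $V\gitsymbol G$ is a line, and since $V$ and therefore the quotient is irreducible, $\gitPVG$ is a single reduced point, giving $\delta=0$. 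This is case~(2). It remains to compute $\delta$ in the two exceptional families of case~(3).

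Here I would exploit castling invariance. A direct computation shows that $\Delta$ and $\gmax$ are unchanged by the substitution $d_k\mapsto d_1\cdots d_{k-1}m-d_k$, and it is classical that the stabilizer in general position is itself a castling invariant, so $\dim S$---and therefore $\delta$---is preserved by castling. By Theorem~\ref{thm:recursive} every member of family (i) is castling-equivalent to the seed $(2,d,d;1)$ and every member of family (ii) to the seed $(d,d;2)$, with $\gmax=d$ in both. For $(d,d;2)$ the classical fact that the invariants of a pair $(A,B)$ of $d\times d$ matrices under $\SL_d\times\SL_d$ are freely generated by the coefficients of the binary form $\det(sA+tB)$ identifies $V\gitsymbol G$ with the space of binary $d$-forms, so $\delta=d=\gmax$. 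For $(2,d,d;1)$ a generic tensor is a pencil of $d\times d$ matrices with distinct spectrum, and the quotient is identified with $d$ unordered points on $\Pp^1$ modulo $\operatorname{PGL}_2$, of dimension $\max(d-3,0)=\max(\gmax-3,0)$. Transporting these values back along castling yields the two nontrivial lines of case~(3).

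The main obstacle is the interface between the general GIT bookkeeping and the two concrete base computations. Establishing $\delta=\Delta+\dim S$ requires care: one must confirm that the generic orbit is simultaneously closed and of maximal dimension, so that it is the generic fibre of the quotient map, and justify the passage from the affine cone to its projectivization. The more delicate point is verifying the invariant-theoretic descriptions of the two seeds $(d,d;2)$ and $(2,d,d;1)$ and, above all, pinning down the precise statement that the stabilizer in general position is a castling invariant; this last fact is what propagates the two explicit base-case dimensions to the entire families and makes $\gmax$, rather than $\Delta$, the governing quantity there.
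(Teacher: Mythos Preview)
Your proposal is correct and follows the same overall architecture as the paper: both arguments rest on the master formula $\delta=\Delta+\dim S$ (the paper invokes it as Rosenlicht's theorem rather than deriving it from polystability and the fibre dimension of the quotient map), reduce via castling to a minimal datum, and then split according to the sign of $R$. The treatment of the cases $R<0$ and $R=0$ is essentially identical to the paper's.

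The one genuine difference is in how the two exceptional seeds in case~(3) are handled. The paper simply reads off $\dim S$ from the explicit stabilizers in general position already computed in Section~\ref{sec:stability} (namely $\dim S=2$ for $(2,2,2;1)$, $\dim S=d-1$ for $(2,d,d;1)$ with $d>2$, and $\dim S=d-1$ for $(d,d;2)$) and feeds these into $\delta=\Delta+\dim S$. You instead identify the quotients directly: $\Mat_{d,d}^2\gitsymbol(\SL_d\times\SL_d)$ with the space of binary $d$-forms via the coefficients of $\det(sA+tB)$, and then $\C^{2,d,d}\gitsymbol(\SL_2\times\SL_d\times\SL_d)$ with binary $d$-forms modulo $\SL_2$, i.e.\ $d$ unordered points on $\Pp^1$ modulo $\operatorname{PGL}_2$. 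Your route is more self-contained in that it bypasses Elashvili's s.g.p.\ tables, but it leans on the classical fact that the coefficients of $\det(sA+tB)$ freely generate the invariant ring for the left--right $\SL_d\times\SL_d$ action on pairs of $d\times d$ matrices; you should give a reference or a short argument for this (it follows, for instance, from the first fundamental theorem for $\SL_d$ together with a dimension count, since both sides have Krull dimension $d+1$ and the coefficient map has generically irreducible fibres).
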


\subsection*{Organization of the paper}
In Section~\ref{sec:gg-inv}, we revisit the general connection between Gaussian group models and invariant theory, and discuss the relevant notions of stability.
In Section~\ref{sec:castling}, we introduce castling transforms and discuss how they preserve stability.
In Section~\ref{sec:stability}, this is used as the key ingredient to derive our recursive characterization (Theorem~\ref{thm:recursive}).
In Section~\ref{sec:uniform}, we deduce our uniform characterization (Theorem~\ref{thm:main-inv}) from the former.
In Section~\ref{sec:mle}, we prove our main results on sample size thresholds for tensor normal models (Theorem~\ref{thm:main} and Corollary~\ref{cor:threshold}).
Finally, in Section~\ref{sec:git quotient} we compute the dimension of the GIT quotient (Theorem~\ref{thm:git}).

\subsection*{Acknowledgements} We would like to thank Carlos Am\'endola, Suguman Bansal, Christian Ikenmeyer, Kathl\'en Kohn, Siddharth Krishna, Mark Van Raamsdonk, Philipp Reichenbach, and Anna Seigal for interesting discussions.


\section{Gaussian group models and invariant theory} \label{sec:gg-inv}
In this section we first discuss the general setup of invariant theory.
Then we define Gaussian group models and their connection to notions of generic stability in invariant theory.
Finally, we discuss some general criteria from the literature useful for characterizing generic stability.

Let $\F = \R$ or $\C$.
Let $G$ be a group.
A \emph{representation} of $G$ is an action of $G$ on a (finite-dimensional) vector space $V$ (over $\F$) by linear transformations.
This is captured succinctly as a group homomorphism $\rho\colon G \rightarrow \GL(V)$.
In particular, an element $g \in G$ acts on $V$ by the linear transformation $\rho(g)$.
We write $g \cdot v$ or $gv$ to mean $\rho(g)v$.
The $G$-orbit of $v \in V$ is the set of all vectors that you can get from $v$ by applying elements of the group, i.e.,
\[
O_v \coloneqq \{gv\ |\ g \in G\} \subseteq V.
\]

Throughout this paper, we will only consider the setting where $G$ is a linear algebraic group (over~$\F$) and where the action is rational, i.e., $\rho\colon G \rightarrow \GL(V)$ is a morphism of algebraic groups.

We denote by $\F[V]$ the ring of polynomial functions on $V$ (also known as the coordinate ring of~$V$).
A polynomial function $f \in \F[V]$ is called {\em invariant} if $f(gv) = f(v)$ for all $g \in G$ and $v \in V$.
In other words, a polynomial is called invariant if it is constant on orbits.
The invariant ring is
\[
\F[V]^G \coloneqq \{f \in \F[V]\ |\ f(gv) = f(v) \ \forall\ g \in G, v \in V\}.
\]
The invariant ring has a natural grading by degree, i.e., $\F[V]^G = \oplus_{d=0}^\infty \F[V]^G_d$ where $\F[V]^G_d$ consists of all invariant polynomials that are homogeneous of degree $d$.
For $v \in V$, we define the stabilizer subgroup $G_v \coloneqq \{g \in G \ |\ gv = v\}$ and we denote by $\overline{O}_v$, the closure of the orbit $O_v$.

\begin{remark}
To define the closure, we need to specify a topology on~$V$.
In this paper, we only use the fields $\F = \R$ or $\C$.
Hence, we will use the standard Euclidean topology on $V$ for orbit closures, unless otherwise specified.
At times we will also need to use the Zariski topology, but we will be careful in specifying it each time.
For $\F = \C$, the orbit closure w.r.t.\ the Euclidean topology agrees with the orbit closure w.r.t.\ the Zariski topology (in the setting of rational actions of reductive groups).
\end{remark}

We make a few definitions, the significance of which will become clear in the following subsections.

\begin{definition}\label{def:stability}
Let $\F = \R$ or $\C$, and let $G$ be an algebraic group (over $\F$) with a rational action on a vector space $V$ (over $\F$), given by $\rho\colon G \rightarrow \GL(V)$.
Let $K$ denote the kernel of the homomorphism~$\rho$. Give $V$ the standard Euclidean topology. Then, for $v \in V$, we say $v$ is
\begin{itemize}
\item {\em unstable} if $0 \in \overline{O}_v$;
\item  {\em semistable} if $0 \notin \overline{O}_v$;
\item  {\em polystable} if $v \neq 0$ and $O_v$ is closed;
\item  {\em stable} if $v$ is polystable and the quotient $G_v/K$ is finite.
\end{itemize}
\end{definition}

\subsection{Gaussian group models}
For a subgroup $G \subseteq \GL_n$, we define an associated \emph{Gaussian group model} by the following family of concentration matrices:
\[
\mathcal{M}_G \coloneqq \{g^\dag g\ |\ g \in \GL_n\}.
\]
where $g^\dagger = \bar g^T$ denotes the adjoint.
So for a concentration matrix $\Psi = g^\dag g \in \mathcal{M}_G$ and an $m$-tuple of samples $Y = (Y_1,\dots,Y_m) \in (\F^n)^m$, the log-likelihood function~\eqref{eq:l_Y intro} simplifies to
\[
l_Y(\Psi) = \frac{m}{2} \log(\det(g^\dag g))  - \frac{1}{2} \lVert g \cdot Y\rVert^2,
\]
where $\lVert\cdot\rVert$ denotes the $\ell_2$-norm on $(\F^n)^m \cong \F^{nm}$ and we note that $G$ acts on $(\F^n)^m$ by the diagonal action $g \cdot Y = (g Y_1,\dots,g Y_m)$.

The following result was proved in \cite[Theorems~6.10 and 6.24]{AKRS}.
It connects maximum likelihood estimation in Gaussian group models to the stability notions introduced in Definition~\ref{def:stability}.

\begin{theorem}[\cite{AKRS}]\label{theo:AKRS}
Let $\F = \R$ or $\C$.
Let $G \subseteq \GL_n$ be a Zariski-closed subgroup that is closed under adjoints and non-zero scalar multiples.
Let $G_{\SL} = \{g \in G \ | \det(g) = 1\} \subseteq G$ and let $Y \in (\F^n)^m$ be an $m$-tuple of samples.
Then, for the diagonal action of $G_{\SL}$, we have
\begin{itemize}
\item  $Y$ is semistable $\Longleftrightarrow$ $l_Y$ is bounded from above;
\item $Y$ is polystable $\Longleftrightarrow$ an MLE exists (i.e., $l_Y$ has a maximum);
\item $Y$ is stable $\implies$ there exists a unique MLE (i.e., $l_Y$ has a unique maximum). \\
If $\F = \C$, the converse also holds, i.e., there exists a unique MLE $\implies Y$ is stable.
\end{itemize}
Moreover, if $\Psi$ is an MLE given $Y$, then the set of all MLEs given $Y$ is $\{g^\dag \Psi g \ |\ g \in (G_{\SL})_Y\}$.
\end{theorem}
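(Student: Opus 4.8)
The plan is to convert the likelihood maximization over $\mathcal M_G$ into a norm-minimization problem over the $G_{\SL}$-orbit of~$Y$, and then read off the three equivalences from Kempf--Ness theory. The crucial use of the hypothesis that $G$ is Zariski-closed and closed under adjoints is the polar decomposition: the isometry subgroup $K \coloneqq \{u \in G : u^\dag u = I\}$ (equal to $G\cap\U(n)$ for $\F=\C$ and $G\cap\operatorname{O}(n)$ for $\F=\R$) is a maximal compact subgroup, and every positive-definite element of~$G$ has all of its real powers in~$G$. Combined with closure under scalars, this lets me factor any $g \in G$ as $g = s\,u\,p$ with $s>0$, $u \in K$, and $p \in G_{\SL}$ positive definite (the normalization $\det p = 1$ being automatic), so that $\Psi = g^\dag g = s^2\, p^\dag p$. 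Substituting into the simplified log-likelihood $l_Y(\Psi) = \tfrac m2\log\det(g^\dag g) - \tfrac12\lVert gY\rVert^2$ gives $l_Y = \tfrac{mn}{2}\log t - \tfrac{t}{2}\lVert pY\rVert^2$ with $t=s^2$, and maximizing this elementary function of $t>0$ (at $t^\ast = mn/\lVert pY\rVert^2$) yields
\[
\sup_{\Psi\in\mathcal M_G} l_Y(\Psi) = C - \tfrac{mn}{2}\,\log\Big(\inf_{h\in G_{\SL}}\lVert hY\rVert^2\Big),\qquad C\coloneqq\tfrac{mn}{2}\big(\log(mn)-1\big),
\]
where $u\in K$ drops out because it acts by isometries. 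Thus maximizing the likelihood is equivalent to minimizing $\lVert hY\rVert$ over the $G_{\SL}$-orbit, for the $K$-invariant norm $\lVert\cdot\rVert$ on $(\F^n)^m$.

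The first two equivalences are then immediate. The right-hand side is finite exactly when $\inf_h\lVert hY\rVert>0$, i.e.\ when $0\notin\overline{O}_Y$, which is the definition of semistability; this gives the first bullet. For the second bullet, the supremum is attained if and only if the infimum $\inf_h\lVert hY\rVert^2$ is attained by some $h_0$, i.e.\ $O_Y$ contains a vector $w\coloneqq h_0Y$ of minimal norm; by the Kempf--Ness theorem this happens precisely when $O_Y$ is closed, i.e.\ when $Y$ is polystable. (The case $Y=0$ is degenerate and unstable, with $l_Y$ unbounded, consistent with both statements.) To obtain the \emph{moreover} description, assume $Y$ polystable with minimal-norm vector $w=h_0Y$ and corresponding MLE $\Psi_0=t^\ast h_0^\dag h_0$. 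By the second part of Kempf--Ness the minimal-norm vectors form a single $K$-orbit, so any minimizer $h$ satisfies $hY\in Kw$; writing $h=k\,h_0\,s$ with $k\in K$ and $s\in(G_{\SL})_Y$ and using $k^\dag k=I$ and $sY=Y$, one computes $\lVert hY\rVert=\lVert w\rVert$ and $t^\ast h^\dag h = s^\dag\Psi_0 s$. Hence the set of MLEs is exactly $\{\,s^\dag\Psi_0 s : s\in(G_{\SL})_Y\,\}$, as claimed.

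It remains to analyze uniqueness, and this is where the argument is most delicate. From the previous paragraph the MLE is unique iff $s^\dag\Psi_0 s=\Psi_0$ for all $s\in(G_{\SL})_Y$; since $\Psi_0=t^\ast h_0^\dag h_0$ this is equivalent to $h_0 s h_0^{-1}$ being an isometry for every such $s$, i.e.\ to the stabilizer $(G_{\SL})_w = h_0(G_{\SL})_Y h_0^{-1}$ being contained in~$K$ — equivalently, to $(G_{\SL})_w$ being \emph{compact}. If $Y$ is stable then $(G_{\SL})_Y$, and hence its conjugate $(G_{\SL})_w$, is finite, a fortiori compact, so the MLE is unique; this holds over both $\R$ and $\C$. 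For the converse over $\C$, I use that $w$ is a minimal-norm vector, so by Kempf--Ness its stabilizer $(G_{\SL})_w$ is self-adjoint (closed under $\dag$); a self-adjoint Zariski-closed subgroup $H\subseteq\GL_n(\C)$ satisfies $H\cong (H\cap\U(n))\times\mathfrak p$ via the Cartan decomposition, so $H$ is compact iff $\mathfrak p=0$ iff $H$ is finite. Thus over~$\C$, uniqueness forces $(G_{\SL})_w$ finite, i.e.\ $Y$ stable (the kernel of the diagonal action being trivial here). This last step is exactly where the field matters: over~$\R$ a compact stabilizer (say a positive-dimensional torus such as $\operatorname{SO}(2)$) need not be finite, so only the forward implication survives.

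The main obstacle is precisely this uniqueness analysis: one must identify the exact MLE set via the single-$K$-orbit structure of minimal vectors, translate uniqueness into compactness of the minimal-vector stabilizer, and then combine the self-adjointness of that stabilizer with the complex-analytic fact ``compact self-adjoint $\Rightarrow$ finite'' to get the converse over~$\C$ (and to see why it fails over~$\R$). By contrast, the reduction to norm minimization and the semistable/polystable equivalences are formal consequences of the scalar optimization in the first paragraph together with the Kempf--Ness theorem.
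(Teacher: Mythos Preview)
The paper does not prove this theorem; it is quoted verbatim from \cite{AKRS} (Theorems~6.10 and~6.24 there), so there is no argument in the present paper to compare yours against.

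That said, your proof is the standard Kempf--Ness reduction and is essentially correct, and it is along the same lines as what \cite{AKRS} does. The scalar optimization in $t$ cleanly converts likelihood maximization into orbit-norm minimization, after which the semistable/polystable equivalences are immediate, and your uniqueness analysis---translating uniqueness into $(G_{\SL})_w\subseteq K$, then using that this stabilizer is self-adjoint and that a compact self-adjoint complex algebraic subgroup must be finite---is exactly the right mechanism for the $\R$/$\C$ asymmetry. A few small points worth tightening: the maximal compact relevant to the $G_{\SL}$-action is $K\cap G_{\SL}$, not $K$ itself, so the ``single $K$-orbit'' statement and the decomposition $h=k\,h_0\,s$ should be phrased with $k\in K\cap G_{\SL}$; the self-adjointness of the stabilizer at a minimal-norm vector is itself a nontrivial part of Kempf--Ness/Richardson--Slodowy theory and deserves a reference rather than a parenthetical; and the claim that real powers of positive-definite elements of $G$ remain in $G$ genuinely uses both Zariski-closedness and closure under adjoints. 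None of these are gaps in the mathematics, only in the exposition.
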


\begin{remark}
In the setting of the above theorem, for $h \in G_{\SL}$, we also have
\[
\bigl\{\text{MLEs given } h \cdot Y\bigr\} = (h^{-1})^\dag \bigl\{\text{MLEs given } Y\bigr\} h^{-1}.
\]
Thus, for any $h \in G_{\SL}$, the MLE given $Y$ is unique if and only if the MLE given $h \cdot Y$ is unique.
\end{remark}

Now let $\F = \R$ and suppose $Y$ is already a point with minimal norm in its orbit.
Then for an appropriate $\lambda \in \R_{>0}$, we have that $\lambda I$ is an MLE and the set of all MLEs is $\{\lambda g^\dag g \ |\ g \in (G_{\SL})_Y\}$.
In particular, we have a unique MLE if and only if $(G_{\SL})_Y \subseteq O_n$, the orthogonal group.
Further, since $(G_{\SL})_Y$~is closed, it must be compact.
The stabilizer of any other point in its $G_\SL$-orbit is obtained by conjugation and remains compact.
In particular, if $Y$ is any tuple of samples such that the MLE exists uniquely, then $(G_{\SL})_Y$ is compact.
This will be important to us, so we record the statement for later use:

\begin{corollary} \label{cor:uniq->compact}
Suppose we are in the setting of Theorem~\ref{theo:AKRS}, with $\F = \R$.
If the MLE given $Y$ exists uniquely, then $(G_{\SL})_Y$ is compact.
\end{corollary}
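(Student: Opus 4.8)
The statement has essentially been derived in the discussion preceding it, so the plan is to assemble that reasoning into a self-contained argument in three moves: reduce to a minimal-norm representative of the orbit, identify the stabilizer of such a representative with a closed subgroup of $O_n$, and finally transport compactness back to the original $Y$ by conjugation.

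First I would exploit that a unique MLE in particular forces an MLE to exist, so by Theorem~\ref{theo:AKRS} the sample $Y$ is polystable and hence $O_Y$ is closed. Since $v \mapsto \lVert v\rVert$ is continuous and proper (its sublevel sets are compact), it attains its minimum on the nonempty closed set $O_Y$; choose $h \in G_{\SL}$ so that $Y' \coloneqq h\cdot Y$ is a point of minimal norm in $O_Y = O_{Y'}$. By the remark following Theorem~\ref{theo:AKRS}, the MLE given $Y'$ is again unique. Thus it suffices to treat the case where $Y$ itself has minimal norm in its orbit.

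Next, for such a minimal-norm $Y'$, I would argue as in the paragraph above that $\lambda I$ is an MLE for a suitable $\lambda \in \R_{>0}$: restricting to the scalar directions $g = \sqrt{t}\,I$, the value $l_{Y'}(g^\dag g) = \tfrac{mn}{2}\log t - \tfrac{t}{2}\lVert Y'\rVert^2$ is strictly concave in $t>0$ and maximized at $t=\lambda = mn/\lVert Y'\rVert^2$, while the minimal-norm condition is exactly the statement that no other first-order variation of $g$ increases $l_{Y'}$, making $\lambda I$ a global maximizer. Invoking the final clause of Theorem~\ref{theo:AKRS}, the set of all MLEs given $Y'$ is then $\{\lambda\, g^\dag g : g \in (G_{\SL})_{Y'}\}$. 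Uniqueness of the MLE forces $g^\dag g = I$, i.e.\ $g \in O_n$, for every $g \in (G_{\SL})_{Y'}$; that is, $(G_{\SL})_{Y'} \subseteq O_n$.

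Finally I would conclude compactness: the stabilizer $(G_{\SL})_{Y'}$ is cut out by polynomial equations and is therefore closed, and being contained in the bounded set $O_n$ it is closed and bounded, hence compact. Since $(G_{\SL})_Y = h^{-1}(G_{\SL})_{Y'}\,h$ is a conjugate of a compact group, it too is compact, which proves the corollary. The step I expect to demand the most care is the justification that $\lambda I$ is a genuine MLE at a minimal-norm point, i.e.\ the precise link between the minimal-norm (moment-map) condition and global maximality of the log-likelihood, together with the small but essential use of polystability to guarantee that the minimal-norm representative lies in the orbit $O_Y$ itself rather than merely in its closure.
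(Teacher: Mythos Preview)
Your proposal is correct and follows essentially the same route as the paper: reduce to a minimal-norm point in the (closed) orbit, observe that there $\lambda I$ is an MLE so uniqueness forces $(G_{\SL})_{Y'}\subseteq O_n$, conclude compactness from closedness, and conjugate back. The paper presents this same argument more tersely in the paragraph immediately preceding the corollary, and like you it leaves the ``$\lambda I$ is an MLE at a minimal-norm point'' step to the Kempf--Ness/moment-map theory underlying Theorem~\ref{theo:AKRS} rather than spelling it out; your first-order-variation remark alone would not give global maximality, but you correctly flag this as the point needing care.
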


\noindent
When $\F=\C$, the same hypothesis and argument shows that $(G_{\SL})_Y$ is finite.
However, we will only need Corollary~\ref{cor:uniq->compact} in the case that $\F=\R$.

\subsection{Notions of generic stability}
Let $G$ be an algebraic group (over $\F)$ and let $V$ be a rational representation (over $\F$). Then we define:
\begin{align*}
V^{\semi} & = \{v \in V \ |\ v \text{ is $G$-semistable}\}, \\
V^{\ps} & = \{v \in V \ |\ v \text{ is $G$-polystable}\}, \\
V^{\st} & = \{v \in V \ |\ v \text{ is $G$-stable}\}.
\end{align*}
We call $V^{\semi}$ (resp.\ $V^{\ps}, V^{\st}$) the \emph{semistable} (resp.\ \emph{polystable}, \emph{stable}) \emph{locus}. If the group is not clear from context then we write $V^{G\text{-}\semi}, V^{G\text{-}\ps}, V^{G\text{-}\st}$.

\begin{definition} \label{defn.gen.stable}
Let $\F = \R$ or $\C$, and let $G$ be an algebraic group (over $\F$) with a rational action on a vector space $V$ (over $\F$).
Then, we say $V$ is \emph{generically $G$-semistable} (resp.\ \emph{polystable}, \emph{stable}) if $V^{\semi}$ (resp.\ $V^{\ps}, V^{\st}$) contain a non-empty Zariski-open subset of $V$.
Further, we say that $V$ is \emph{unstable} if $V^{\semi} = \emptyset$.
\end{definition}

These notions are particularly well-behaved in the case that $\F=\C$, as we will see in the following.
We refer to \cite[Corollary~2.15, Lemma~2.16]{DM-mle} for a succinct proof of the following standard result:

\begin{lemma}\label{lem.loc.const}
Suppose $\F = \C$. Let $V$ be a rational representation of a complex reductive group $G$.
Then, the subsets $V^{\semi}$ and $V^{\st}$ are Zariski-open and the subset $V^{\ps}$ is Zariski-constructible, i.e., it is a union of Zariski-locally closed subsets.
Moreover, $V$ is generically semistable if and only if it is not unstable.
\end{lemma}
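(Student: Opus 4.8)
The plan is to treat the four assertions in increasing order of difficulty, using throughout that over $\F=\C$ the Euclidean and Zariski orbit closures agree, so that all three stability notions may be tested with Zariski orbit closures. I would begin with the openness of $V^{\semi}$, which is elementary: by Hilbert's finiteness theorem $\C[V]^G$ is finitely generated, so the positive-degree invariants cut out a Zariski-closed null cone $\nullcone$, and the standard fact that over $\C$ the affine quotient map $\pi\colon V\to\operatorname{Spec}\C[V]^G$ separates disjoint closed $G$-invariant sets gives $0\in\overline{O}_v$ if and only if every positive-degree invariant vanishes at $v$, i.e.\ $V^{\semi}=V\setminus\nullcone$. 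Hence $V^{\semi}$ is Zariski-open. The final assertion then follows at once: since $V$ is irreducible, a non-empty Zariski-open set is automatically dense, so $V$ is generically semistable if and only if $V^{\semi}\neq\emptyset$, if and only if $V$ is not unstable.

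The openness of $V^{\st}$ is the main obstacle, and it is genuinely more than a semicontinuity statement. The map $v\mapsto\dim G_v$ is upper semicontinuous, so the finite-stabilizer locus $\{v:\dim G_v=\dim K\}$ is Zariski-open; but this does not suffice, because a semistable point with finite stabilizer modulo $K$ can still fail to have a closed orbit. For $\SL_2$ acting on binary quartics, a form with one double root and two simple roots has finite stabilizer modulo $K=\{\pm I\}$ and is semistable, yet it is not stable and its orbit is not closed. Closedness of the orbit is therefore an essential extra condition, and to see that it is open I would invoke Luna's \'etale slice theorem: at a stable point $v$ the stabilizer $H=G_v/K$ is finite, so there is an $H$-invariant slice $S\ni v$ and an \'etale $G$-map $G\times_H S\to V$ onto a saturated open neighborhood $U$ of $O_v$, compatible with an \'etale map on quotients. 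Because $H$ is finite, every point of $S$ has finite stabilizer and (trivially) closed $H$-orbit, and transporting through the associated-bundle description these properties become finite stabilizer and closed $G$-orbit for the corresponding points of $U$. Thus $U\subseteq V^{\st}$ is an open neighborhood of $v$, which proves openness.

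For the constructibility of $V^{\ps}$ I would use that each fiber of $\pi$ contains a unique closed orbit, which is the orbit of minimal dimension in that fiber; indeed every orbit in a fiber contains the closed orbit in its closure, so a non-closed orbit has strictly larger dimension than the closed one. Hence, writing $e(y)$ for the dimension of the closed orbit over $y$, a point $v\in V^{\semi}$ is polystable exactly when $\dim O_v=e(\pi(v))$. The function $v\mapsto\dim O_v$ is lower semicontinuous, hence constructible, so it remains to check that $e\circ\pi$ is constructible. This I would obtain from the Luna stratification of $V^{\semi}$ by conjugacy class of the stabilizer of the closed orbit (using Matsushima's criterion that this stabilizer is reductive): the strata are finitely many locally closed $G$-invariant sets, and on each one $e$ is constant, equal to $\dim G$ minus the dimension of the relevant stabilizer. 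Consequently $V^{\ps}=\{v\in V^{\semi}:\dim O_v=e(\pi(v))\}$ is the locus where two constructible functions agree, and is therefore Zariski-constructible.

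In summary, the semistable part and the generic-versus-unstable equivalence are formal consequences of Hilbert finiteness and irreducibility, whereas the openness of $V^{\st}$ and the constructibility of $V^{\ps}$ both rest on Luna's slice theorem. I expect the openness of the stable locus to be the delicate step: the finite-stabilizer locus is easily seen to be open, but the binary-quartic example shows that the closed-orbit condition must be handled separately, which is precisely what the slice theorem accomplishes.
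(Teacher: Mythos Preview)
Your argument is correct. The paper itself does not give a proof of this lemma at all; it simply states that the result is standard and refers to \cite[Corollary~2.15, Lemma~2.16]{DM-mle} for details. So there is nothing to compare against in the present paper beyond noting that you have supplied a self-contained proof where the authors chose to cite one.

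Your route is the standard one: the openness of $V^{\semi}$ via the null cone and Hilbert finiteness, the openness of $V^{\st}$ via Luna's \'etale slice theorem (correctly flagging, with the binary quartic example, that upper semicontinuity of stabilizer dimension alone is insufficient), and the constructibility of $V^{\ps}$ via the Luna stratification together with semicontinuity of orbit dimension. The final equivalence follows immediately from irreducibility of $V$, as you observe. Each of these steps is valid, and the level of detail you give is more than what the cited reference would require of a reader.
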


Zariski-open subsets of a vector space, whenever non-empty, are complements of lower dimensional subvarieties, which have Lebesgue measure zero.
Zariski-constructible subsets of a vector space, on the other hand, have Lebesgue measure zero unless they contain a Zariski-open subset, in which case their complement has Lebesgue measure zero.
Hence, we can conclude the following:

\begin{corollary}\label{cor:mle-gen.stable}
Suppose we are in the setting of Theorem~\ref{theo:AKRS}, with $\F = \C$.
Fix a number of samples~$m$ and let $V=(\C^n)^m$.
Then, for the diagonal action of $G_\SL$ we have
\begin{itemize}
\item $V$ is generically semistable $\Longleftrightarrow$ $l_Y$ is almost surely bounded from above
\item $V$ is generically polystable $\Longleftrightarrow$ an MLE exists almost surely;
\item $V$ is generically stable $\Longleftrightarrow$ there exists a unique MLE almost surely;
\item $V$ is unstable $\Longleftrightarrow$ $l_Y$ is always unbounded from above.
\end{itemize}
Moreover, the first and last condition are complementary.
Here we say a property holds almost surely if it holds for all $Y$ in $V$ up to a set of Lebesgue measure zero.
\end{corollary}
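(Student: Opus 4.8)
The plan is to combine the pointwise dictionary of Theorem~\ref{theo:AKRS} with the topological description of the stability loci in Lemma~\ref{lem.loc.const}, and then to translate ``generic'' into ``almost sure'' using the two measure-theoretic facts recorded just above the statement: a non-empty Zariski-open subset of the vector space $V$ has full Lebesgue measure (its complement is a proper subvariety), while a Zariski-constructible subset has measure zero unless it contains a non-empty Zariski-open subset. By Theorem~\ref{theo:AKRS}, for a fixed $Y \in V=(\C^n)^m$ the property ``$l_Y$ is bounded from above'' holds exactly when $Y \in V^\semi$, ``an MLE given $Y$ exists'' holds exactly when $Y \in V^\ps$, and (since $\F = \C$, so that the converse in Theorem~\ref{theo:AKRS} applies) ``the MLE given $Y$ is unique'' holds exactly when $Y \in V^\st$. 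Thus each of the four claimed equivalences reduces to comparing the generic behaviour of one of the loci $V^\semi, V^\ps, V^\st$ with its almost-sure behaviour.

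First I would dispose of the two \emph{open} cases. By Lemma~\ref{lem.loc.const}, both $V^\semi$ and $V^\st$ are Zariski-open, so each is either empty or the complement of a proper subvariety. For such a set, ``contains a non-empty Zariski-open subset'' is equivalent to being non-empty, and for a Zariski-open set non-emptiness is in turn equivalent to having full Lebesgue measure. Hence $V$ is generically semistable $\iff V^\semi \neq \emptyset \iff V^\semi$ has full measure $\iff l_Y$ is bounded above for almost every $Y$; and likewise $V$ is generically stable $\iff V^\st$ has full measure $\iff$ the MLE is almost surely unique. For the unstable case, $V$ unstable means $V^\semi = \emptyset$, which by the dictionary says that no $Y$ has $l_Y$ bounded, i.e.\ $l_Y$ is always unbounded. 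The final ``complementary'' assertion is then immediate from the last sentence of Lemma~\ref{lem.loc.const}: $V$ is generically semistable if and only if it is not unstable, so exactly one of ``$l_Y$ almost surely bounded'' and ``$l_Y$ always unbounded'' holds.

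The one case requiring care is polystability, since Lemma~\ref{lem.loc.const} only guarantees that $V^\ps$ is Zariski-constructible, not open, and this is where I expect the main (admittedly modest) obstacle to lie: one cannot simply equate ``non-empty'' with ``generic'', and must genuinely use constructibility to exclude the intermediate possibility of a locus of positive but not full measure. Here I would invoke the dichotomy above: if $V$ is generically polystable, then $V^\ps$ contains a non-empty Zariski-open set $U$; since $V \setminus U$ has measure zero and $U \subseteq V^\ps$, an MLE exists for almost every $Y$. Conversely, if $V$ is not generically polystable, then $V^\ps$ contains no non-empty Zariski-open subset, so by the dichotomy it has measure zero; since the MLE exists exactly on $V^\ps$, it fails to exist for almost every $Y$, and so it is \emph{not} true that an MLE exists almost surely. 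This settles the polystable equivalence, and together with the open and unstable cases completes all four statements. All remaining steps are direct translations through Theorem~\ref{theo:AKRS}.
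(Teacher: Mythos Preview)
Your proposal is correct and follows exactly the approach the paper intends: combine the pointwise dictionary of Theorem~\ref{theo:AKRS} with Lemma~\ref{lem.loc.const} and the measure-theoretic dichotomy for Zariski-open and Zariski-constructible sets stated just before the corollary. The paper leaves this derivation implicit (``Hence, we can conclude the following''), and you have supplied precisely the details it omits, including the only subtle point, namely that constructibility of $V^{\ps}$ forces the measure-zero/full-measure dichotomy needed for the polystable equivalence.
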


Let us also mention one lemma that will be useful for us later

\begin{lemma}\label{lem.increase}
Suppose $G$ is a complex algebraic group and let $V$ be a rational representation over~$\C$.
If $V^{\oplus m}$ is generically $G$-stable (resp.\ $G$-semistable), then $V^{\oplus n}$ is generically $G$-stable (resp.\ $G$-semistable) for all $n \geq m$ with respect to the diagonal actions of $G$.
\end{lemma}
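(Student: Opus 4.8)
The plan is to exploit the $G$-equivariant linear projection $\pi\colon V^{\oplus n} \to V^{\oplus m}$ onto the first $m$ summands, which intertwines the two diagonal actions and is surjective, so that the preimage under $\pi$ of a nonempty Zariski-open set is again nonempty and Zariski-open. I would show that $\pi$ pulls back semistable (resp.\ stable) points of $V^{\oplus m}$ to semistable (resp.\ stable) points of $V^{\oplus n}$. Applying this to the nonempty Zariski-open locus guaranteed by the hypothesis and Definition~\ref{defn.gen.stable} then yields generic semistability (resp.\ stability) of $V^{\oplus n}$, and since such a projection exists for every $n \geq m$, this directly covers all $n \geq m$ without any need to iterate.

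For semistability the argument is a one-line contrapositive. If $v=(v',w)\in V^{\oplus n}$ were unstable, then $0\in\overline{O_v}$, and since $\pi$ is continuous and $G$-equivariant with $\pi(O_v)=O_{v'}$, we would obtain $0=\pi(0)\in\pi(\overline{O_v})\subseteq\overline{O_{v'}}$, contradicting semistability of $v'=\pi(v)$. Hence every $v$ whose projection $\pi(v)$ is semistable is itself semistable, and pulling back the semistable locus of $V^{\oplus m}$ finishes this case.

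For stability I would prove the sharper statement that \emph{every} extension $v=(v',w)$ of a stable point $v'\in V^{\oplus m}$ is stable in $V^{\oplus n}$. Finiteness of the stabilizer is immediate: the kernel $K$ of the diagonal action on $V^{\oplus n}$ coincides with that on $V^{\oplus m}$, and $G_v\subseteq G_{v'}$, so $G_v/K$ embeds in the finite group $G_{v'}/K$; also $v\neq 0$ since $v'\neq 0$. The crux, and the step I expect to be the main obstacle, is showing that the $G$-orbit of $v$ is closed. Here I would use that stability of $v'$ makes the induced $(G/K)$-action proper at $v'$: if $g_j v\to u$, then projecting gives $g_j v'\to \pi(u)\in\overline{O_{v'}}=O_{v'}$, and after translating by a single group element we may assume $g_j v'\to v'$; properness then yields a subsequence with $\overline{g_j}\to \bar h$ in $G/K$ and a representative $h\in G_{v'}$, whence $g_j v\to hv\in O_v$ by continuity, so $u=(g_0 h)v\in O_v$ and $O_v$ is closed. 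Establishing this properness, namely that a closed orbit together with a finite stabilizer modulo $K$ forces the orbit map $G/K\to O_{v'}$ to be proper, is the one genuinely nontrivial ingredient; it follows from the facts that the algebraic orbit map $G/G_{v'}\isom O_{v'}$ is an isomorphism onto the closed, hence locally compact, orbit, and that $G/K\to G/G_{v'}$ is a finite covering. Combining the stabilizer statement, the closed-orbit statement, and $v\neq 0$ gives stability of $v$, and pulling back the stable locus of $V^{\oplus m}$ completes the argument.
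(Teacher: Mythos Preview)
Your proof is correct, but it takes a rather different and more laborious route than the paper's. The paper argues via the \emph{inclusion} $V^{\oplus m}\hookrightarrow V^{\oplus n}$ (zero-padding), observing that a point $(v',0,\dots,0)$ has the same $G$-orbit and stabilizer as~$v'$, so $(V^{\oplus m})^{\st}\subseteq (V^{\oplus n})^{\st}$ tautologically; the stable locus of $V^{\oplus n}$ is therefore nonempty, and Lemma~\ref{lem.loc.const} (Zariski-openness of the stable and semistable loci) finishes the proof in one line. You instead go via the \emph{projection} $\pi\colon V^{\oplus n}\to V^{\oplus m}$ and establish the stronger pointwise claim that every $(v',w)$ with $v'$ stable is itself stable, which forces you into a properness argument for closedness of the orbit. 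What you gain is independence from Lemma~\ref{lem.loc.const}: your argument never uses that the stable locus is Zariski-open, only that the preimage of a Zariski-open set under a surjective linear map is Zariski-open. What you lose is brevity---the paper's proof is three sentences, whereas your orbit-closure step, while correct, invokes the nontrivial fact that $G/G_{v'}\to O_{v'}$ is a homeomorphism and that finite covers lift convergent sequences subsequentially.
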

\begin{proof}
Suppose $V^{\oplus m}$ is generically $G$-stable.
We have an inclusion $(V^{\oplus m})^{\st} \subseteq (V^{\oplus n})^{\st}$ with respect to the diagonal actions of $G$.
So $(V^{\oplus n})^{\st}$ is non-empty and further it is Zariski open by Lemma~\ref{lem.loc.const}.
Thus, $V^{\oplus n}$ is generically $G$-stable.
The argument for semistability is similar.
\end{proof}

\subsection{Stabilizers in general position}\label{subsec:sgp}
Let $\F = \C$ for this section.
Let $V$ be a rational representation of a reductive group $G$.
We say that $H$ is a \emph{stabilizer in general position (s.g.p.)} if there is a non-empty Zariski-open subset $U \subseteq V$ such that for all $v \in U$, the stabilizer $G_v$ is isomorphic to $H$.
The s.g.p.\ is unique up to conjugation.
Its existence is far from obvious and follows from Luna's slice theorem, see e.g., \cite[Theorem~7.2]{Popov-Vinberg}.
Indeed, when $\F = \R$, stabilizers in general position often do not exist.

Matsushima's criterion tells us that if an orbit of a point is closed, then the stabilizer is reductive.
Hence, if $V$ is generically polystable, then the s.g.p.\ must be reductive. The converse was proved by Popov:

\begin{theorem}[\cite{Popov}]\label{theo:Popov}
Let $\rho\colon G \rightarrow \GL(V)$ be a rational representation of a reductive group. Then,~$V$ is generically polystable if and only if the stabilizer in general position is reductive.
\end{theorem}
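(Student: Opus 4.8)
The forward implication is immediate and is exactly the observation made just before the statement: if $V$ is generically polystable, then a generic $v$ has closed orbit $O_v$, so by Matsushima's criterion $G_v$ is reductive, and since a generic such $G_v$ is isomorphic to the stabilizer in general position $H$, the group $H$ is reductive. Thus the entire content lies in the converse, and my plan is to prove it by induction on $\dim V$ using Luna's \'etale slice theorem.

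So assume $H$ is reductive and let $\pi\colon V \to V\gitsymbol G$ be the categorical (GIT) quotient, each of whose fibers contains a unique closed orbit. First I would choose a point $w$ with closed orbit $O_w$ representing a \emph{generic} fiber, taken so that $O_w$ is the closed orbit inside $\overline{O_v}$ for a generic $v$ and so that the slice below actually sees the s.g.p. Since $O_w$ is closed, $G_w$ is reductive, and Luna's slice theorem supplies a $G_w$-stable locally closed slice $S$ through $w$, identified $G_w$-equivariantly near $w$ with the slice representation $N_w = T_w V / T_w O_w$, together with a map $G \times_{G_w} S \to V$ that is \'etale onto a saturated neighborhood. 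The two structural inputs I rely on are: (i) a $G$-orbit meeting $S$ at a point $s$ is closed near $w$ if and only if the $G_w$-orbit $G_w \cdot s$ is closed in $N_w$; and (ii) stabilizers match, $G_v \cong (G_w)_s$, so that the stabilizer in general position of the $G_w$-action on $N_w$ is again $H$, hence reductive.

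Granting these, the induction is as follows. If the generic closed orbit $O_w$ has positive dimension, then $\dim N_w = \dim V - \dim O_w < \dim V$, while $(G_w, N_w)$ is a representation of a reductive group whose s.g.p. is the reductive group $H$; by the inductive hypothesis $N_w$ is generically $G_w$-polystable, so a generic $s \in N_w$ has closed $G_w$-orbit, and transporting through the \'etale slice shows a generic $v$ has closed $G$-orbit, i.e.\ $V$ is generically polystable. The simplest base case is when the generic orbit $O_v$ itself is $0$-dimensional: then $O_v$ is finite, hence closed, and a generic (nonzero) $v$ is already polystable.

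The step I expect to be the main obstacle is the correct bookkeeping around the slice together with the genuinely degenerate base case. First, one must ensure that the chosen closed orbit $O_w$ lies in the stratum where the generic stabilizer $H$ is realized, so that the s.g.p.\ of $(G_w, N_w)$ truly coincides with $H$; this is the delicate comparison, in the spirit of Luna--Richardson, between the stabilizer of a generic \emph{point} and that of a generic \emph{closed orbit}, and it is what allows the inductive hypothesis to apply. Second, one must control the case in which the generic orbit is positive-dimensional yet degenerates to a lower-dimensional closed orbit, so that no dimension drop occurs in the naive induction; this is precisely where the group-theoretic structure (and reductivity of $G_w$, which is what makes a slice exist at all) must be brought to bear to force the conclusion. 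Once the slice is set up correctly, closedness of orbits is a local-\'etale property and the recursion is routine, so the heart of the argument is the reductivity of $G_w$ combined with the faithful transfer of the s.g.p.\ to the slice representation.
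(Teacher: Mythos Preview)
The paper does not prove this theorem; it simply quotes it as a result of Popov (1970), noting only the forward direction via Matsushima's criterion, exactly as you do. So there is no ``paper's proof'' to compare against, and what follows is an assessment of your sketch on its own merits.

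Your inductive plan via Luna's slice theorem is a reasonable strategy, and the bookkeeping you outline (matching the s.g.p.\ of $(G_w,N_w)$ with $H$, transferring closedness of orbits through the \'etale slice) can indeed be made precise. But you have correctly located, and then not resolved, the real obstruction. The induction gives $\dim N_w = \dim V - \dim O_w$, so you make progress only when $\dim O_w > 0$. After reducing to $V^G = 0$, the unique closed zero-dimensional orbit is $\{0\}$; thus the bad case is exactly when the generic fibre of $V \to V\gitsymbol G$ has $\{0\}$ as its closed orbit, i.e.\ when $V$ is unstable. Then $w=0$, $G_w=G$, $N_w=V$, and your induction is vacuous. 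Your final paragraph names this difficulty but supplies no mechanism to exclude it.

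This is not a side issue: ruling out ``$H$ reductive yet $V$ unstable'' is essentially the content of the theorem. Matsushima tells you $G/G_v$ is affine when $G_v$ is reductive, but an affine orbit in $V$ need not be closed, so no pointwise conclusion is available. Popov's 1970 argument does not use slices (it predates Luna's theorem); it proceeds instead through semi-invariants and the divisor structure of factorial varieties. If you want to complete the slice-theoretic route, you need an independent proof that a representation of a semisimple group with reductive s.g.p.\ is never unstable---for instance via Kempf's optimal destabilising one-parameter subgroups, which force a unipotent element into a generic stabiliser when the null cone is everything.

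One minor caution: as literally stated (for \emph{reductive} $G$) the theorem is false---take $G=\C^*$ on $V=\C$ by scaling, where the s.g.p.\ is trivial but no nonzero point is polystable. Popov's result is for semisimple groups, which is the only case the paper actually uses.
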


\begin{corollary}\label{cor:crit gen stab}
Let $\rho\colon G \rightarrow \GL(V)$ be a rational representation of a reductive group and let $K$ denote the kernel of $\rho$. Let $H$ be the stabilizer in general position. The following are equivalent.
\begin{enumerate}
\item $V$ is generically stable;
\item $\dim(H) = \dim(K)$;
\item $\dim(G_v) = \dim(K)$ for some $v \in V$;
\end{enumerate}
\end{corollary}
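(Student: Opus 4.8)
The plan is to establish the three equivalences by reducing everything to the dimension of the generic stabilizer and invoking Popov's theorem (Theorem~\ref{theo:Popov}) to handle the polystability half of stability. Throughout I would lean on two elementary facts. First, since $K = \ker\rho$ acts trivially, $K \subseteq G_v$ for every $v \in V$, so $\dim K \leq \dim G_v$ for all $v$. Second, the function $v \mapsto \dim G_v$ is upper semicontinuous on $V$ in the Zariski topology (the incidence set $\{(g,v) : gv = v\} \subseteq G \times V$ is Zariski-closed and projects onto $V$ with fiber $G_v$ over $v$), so its minimal value is attained on a non-empty Zariski-open subset. As $V$ is a vector space, hence irreducible, this minimal value equals $\dim H$, the dimension of the s.g.p. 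Combining the two gives $\dim K \leq \dim H \leq \dim G_v$ for all $v$, inequalities I would use repeatedly.

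I would dispatch the easy implications first. For $(1)\Rightarrow(2)$: if $V$ is generically stable, then $V^{\st}$ contains a non-empty Zariski-open set $W$, which meets the non-empty Zariski-open s.g.p. locus $U$ since $V$ is irreducible; for $v \in W \cap U$, stability forces $G_v/K$ finite, so $\dim G_v = \dim K$, while $G_v \cong H$ gives $\dim H = \dim G_v = \dim K$. The implication $(2)\Rightarrow(3)$ is immediate, taking any $v \in U$. For $(3)\Rightarrow(2)$: if $\dim G_v = \dim K$ for some $v$, the inequalities above yield $\dim K \leq \dim H \leq \dim G_v = \dim K$, forcing $\dim H = \dim K$.

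The substantive step is $(2)\Rightarrow(1)$. Assuming $\dim H = \dim K$, I would first show $H$ is reductive so as to apply Theorem~\ref{theo:Popov}. For $v$ in the s.g.p. locus we have $K \subseteq G_v$ with $\dim K = \dim G_v$, so the identity components coincide, $(G_v)^\circ = K^\circ$, and since $G_v \cong H$ we get $H^\circ \cong K^\circ$. Now $K$ is a closed normal subgroup of the reductive group $G$, hence reductive: its unipotent radical $R_u(K)$ is characteristic in $K$, therefore normal in $G$, so it is a connected normal unipotent subgroup of $G$ and thus lies in the trivial unipotent radical $R_u(G)$, giving $R_u(K) = \{e\}$. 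Consequently $H^\circ \cong K^\circ$ is reductive, so $H$ is reductive, and Theorem~\ref{theo:Popov} shows $V$ is generically polystable, i.e.\ $V^{\ps}$ contains a non-empty Zariski-open set. Intersecting it with $U$, every point $v$ in the intersection is polystable with $G_v \cong H$ of dimension $\dim K$, hence $G_v/K$ is finite and $v$ is stable; therefore $V^{\st}$ contains a non-empty Zariski-open set and $V$ is generically stable.

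The main obstacle is precisely $(2)\Rightarrow(1)$, and specifically the need to produce \emph{polystability} rather than just finiteness of $G_v/K$: the dimension condition controls only the discrete part of the generic stabilizer, and one genuinely needs Popov's criterion together with the reductivity of $K$ to upgrade ``generic stabilizer is finite modulo $K$'' into ``generic orbit is closed.'' The reductivity of the kernel $K$, and the passage from $\dim H = \dim K$ with $K \subseteq G_v$ to $H^\circ \cong K^\circ$ and hence to reductivity of $H$, is the one point where structure theory of algebraic groups, rather than a bare dimension count, is essential.
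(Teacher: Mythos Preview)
Your proof is correct and follows essentially the same route as the paper's: the easy implications $(1)\Rightarrow(2)\Rightarrow(3)$ and the semicontinuity argument for $(3)\Rightarrow(2)$ match, and for $(2)\Rightarrow(1)$ both you and the paper argue that $K$ is reductive, that $G_v$ and $K$ share identity component so $H$ is reductive, and then invoke Theorem~\ref{theo:Popov}. The only difference is cosmetic---you spell out the reductivity of $K$ via the unipotent radical being characteristic, whereas the paper simply cites that kernels of morphisms between reductive groups are reductive.
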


\begin{proof}
Clearly $(1) \implies (2) \implies (3)$.
For $(2) \implies (1)$, Observe that $\dim(H) = \dim(K)$ implies that $G_v/K$ is finite for generic $v \in V$. The kernel of a morphism of (affine) algebraic groups between reductive groups is reductive, so $K$ is reductive. Since $G_v/K$ is finite  (for generic $v \in V$), this means that $G_v$ and $K$ have the same identity component and hence $G_v$ is also reductive. In particular, it means that $H$ is reductive. Hence $V$ is generically polystable by Theorem~\ref{theo:Popov}, and of further generically stable because $G_v/K$ is finite for generic $v \in V$.

For $(3) \implies (2)$, we observe that the set of points $U = \{v \in V\ |\ \dim(G_v) \leq \dim(K)\}$ is Zariski open.
Note that $U = \{v \in V\ |\ \dim(G_v) = \dim(K)\}$ since $K \subseteq G_v$ for all $v \in V$.
Since $U$ is non-empty Zariski open, it follows that $\dim(H) = \dim(K)$ as well.
\end{proof}

\subsection{A criterion for generic (poly)stability}\label{subsec:index}
Let still be $\F = \C$ for this section.
Starting from the late 1960s, there has been an interest in classifying actions that are generically polystable or stable, see for example, \cite{ave,Elashvili,SK,AMPopov}.
From this line of research, we will recall a few results that will be important for us.

If $S$ is a simple algebraic group, then the Killing form defined by $(X,Y)\mapsto \tr(\ad(X)\ad(Y))$ is a nondegenerate symmetric $S$-invariant bilinear form on the Lie algebra ${\mathfrak s}$ of $S$.
Up to a scalar,~${\mathfrak s}$~has only one $S$-invariant symmetric bilinear form. If $\rho\colon S\to \GL(V)$
and $d\rho\colon {\mathfrak s}\to \End(V)$ is the corresponding representation of the Lie algebra, then $(X,Y)\mapsto\tr(d\rho(X)d\rho(Y))$ is a nonzero symmetric $S$-invariant bilinear form on~${\mathfrak s}$.
So there is a constant $\iota_S(V)$, called the \emph{index} of the representation, such that
\begin{align*}
  \tr(d\rho(X)d\rho(Y))=\iota_S(V)\tr(\ad(X)\ad(Y))  
\end{align*}
for all $X,Y\in {\mathfrak s}$.
The index is additive.
Furthermore, we have $\idx_{\SL_n}(\C^n) = \frac1{2n}$ for the defining representation of $\SL_n$.

Andreev, Vinberg, and Elashvili proved the following criterion for generic stability in~\cite[Theorem]{ave}.

\begin{theorem}[\cite{ave}]\label{thm:ave}
Let $\rho\colon G \rightarrow \GL(V)$ be a rational representation of a connected semisimple\footnote{Semisimple groups are reductive.}
group.
Let $H$ be the stabilizer in general position.
If $\idx_S(V) > 1$ for all simple normal subgroups $S \subseteq G$, then $\dim(H) = 0$.
In particular, $V$ is generically $G$-stable.
\end{theorem}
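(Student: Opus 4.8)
The plan is to reduce the statement to a single pointwise assertion. Write $a_v\colon \g \to V$, $X \mapsto d\rho(X)v$, for the infinitesimal orbit map; its kernel is $\Lie(G_v)$, so $\dim G_v = \dim\ker a_v$ is upper semicontinuous in $v$ and attains its minimal value — which is the generic value $\dim H$ — on a nonempty Zariski-open set. Hence $\dim H = 0$ as soon as $a_{v_0}$ is injective for even one $v_0 \in V$. Since the hypothesis $\idx_S(V) > 1 > 0$ makes $d\rho$ nonzero on each simple factor, the Lie algebra of $\ker\rho$ is an ideal of $\g$ meeting each simple factor trivially, so the kernel $K$ of $\rho$ is finite; thus $\dim K = 0$ and Corollary~\ref{cor:crit gen stab} turns $\dim H = 0$ into generic $G$-stability. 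It therefore suffices to show that the \emph{degeneracy locus} $B \coloneqq \{v \in V : \g_v \neq 0\}$ is a proper subset of $V$. Note $B$ is Zariski-closed: it is the image of the closed set $\widetilde B \coloneqq \{(v,[X]) \in V \times \Pp(\g) : d\rho(X)v = 0\}$ under the projection to $V$, which is proper because $\Pp(\g)$ is complete.

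To bound $\dim B \le \dim\widetilde B$, I would instead project $\widetilde B$ to $\Pp(\g)$. The fiber over $[X]$ is the linear space $\ker d\rho(X)$, so stratifying by corank gives the upper bound $\dim\widetilde B \le \max_{j \ge 1}\bigl(\dim\{[X] : \dim\ker d\rho(X) \ge j\} + j\bigr)$, and the goal becomes $\dim\widetilde B < \dim V$. The index enters through the spectrum of $d\rho(X)$: fixing a Cartan subalgebra $\tn \subseteq \g$, the eigenvalues of $d\rho(X)$ for $X \in \tn$ are the weights $\lambda(X)$ of $V$, so the defining relation of the index yields $\sum_\lambda \lambda(X)^2 = \tr(d\rho(X)^2) = \idx_S(V)\,\tr(\ad(X)^2) = \idx_S(V)\sum_{\alpha}\alpha(X)^2$, the final sum running over the roots. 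By additivity of the index over direct summands and over the simple factors of $G$, it suffices to carry this out one simple factor at a time.

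The most transparent stratum is the generic one. For regular $X$ the kernel of $d\rho(X)$ is exactly the zero-weight space $V_0$, so the corank equals $\dim V_0$ on a dense open subset of $\Pp(\g)$; this stratum contributes $\dim\g - 1 + \dim V_0$, and controlling it already requires $\dim S = \dim\g \le \dim V - \dim V_0$, i.e. the number of nonzero weights of $V$ (with multiplicity) is at least $\dim S$. For the adjoint representation $V = \mathfrak{s}$ the nonzero weights are the roots, numbering $\dim S - \rk S$, so this inequality \emph{fails by exactly $\rk S$}; this is the mechanism behind the positive-dimensional generic stabilizer of the adjoint action, and it shows the threshold at $\idx_S(V) = 1$ is genuine. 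The role of the strict inequality $\idx_S(V) > 1$ is precisely to push $V$ past the adjoint representation so that this weight count turns in our favor.

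The step I expect to be the real obstacle is to convert the pointwise index inequality $\sum_\lambda \lambda(X)^2 > \sum_\alpha \alpha(X)^2$ into the combinatorial bounds $\dim\{[X] : \dim\ker d\rho(X) \ge j\} + j < \dim V$ for \emph{every} $j$, not merely the generic one. A single large weight need not produce many nonzero weights, so the naive trace estimate is too weak; the saving grace should be that weights occur in full Weyl-group orbits, so that the excess ``energy'' forced by $\idx_S(V) > 1$ is necessarily spread over enough distinct weights to both keep $\dim V - \dim V_0 \ge \dim S$ and ensure that demanding extra weights to vanish on a subspace $\tn' \subseteq \tn$ cuts the dimension of the admissible $X$ by at least as much as the resulting gain in kernel dimension. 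Making this Weyl-orbit bookkeeping precise and uniform in $j$ — together with the reduction to simple $G$ and the accounting for the zero-weight multiplicity — is the technical heart of the argument; the reductions in the first three paragraphs are formal.
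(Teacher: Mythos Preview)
The paper does not prove Theorem~\ref{thm:ave} at all: it is quoted verbatim from Andreev--Vinberg--Elashvili~\cite{ave} and used as a black box (alongside Theorem~\ref{thm:elashvili}) in the proof of Theorem~\ref{thm:recursive}, part~(4). So there is no ``paper's own proof'' to compare your proposal against.

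As for the proposal itself, your reductions are sound (upper semicontinuity of $\dim G_v$, the finiteness of $K$ from $\idx_S(V)>0$, and the incidence-variety setup over $\Pp(\g)$), and they are indeed in the spirit of the original argument in~\cite{ave}. But you explicitly stop short of the only nontrivial step: establishing, for every corank stratum, the inequality $\dim\{[X]:\dim\ker d\rho(X)\ge j\}+j<\dim V$. Your heuristic that Weyl-orbit spreading converts the pointwise trace inequality $\sum_\lambda \lambda(X)^2>\sum_\alpha\alpha(X)^2$ into the needed dimension bounds is suggestive but not an argument; in particular, it does not yet address the strata coming from non-semisimple $X$ (where $\ker d\rho(X)$ is governed by Jordan structure rather than weight vanishing), and even on the semisimple locus the passage from a quadratic inequality to a combinatorial count of vanishing weights is the actual content of~\cite{ave}. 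What you have written is a correct plan and an honest identification of where the work lies, not a proof.
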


Elashvili proved a very similar criterion for generic polystability in \cite[Theorem 2]{Elashvili}.

\begin{theorem}[\cite{Elashvili}]\label{thm:elashvili}
Let $\rho\colon G \rightarrow \GL(V)$ be a rational representation of a connected semisimple
group.
Let $H$ be the stabilizer in general position.
If $\idx_S(V) \geq 1$ for all simple normal subgroups $S\subseteq G$, then the Lie algebra of $H$ is the Lie algebra of a torus.
In particular, $H$ is reductive, so $V$ is generically $G$-polystable.
\end{theorem}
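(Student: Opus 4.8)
The plan is to combine Popov's criterion (Theorem~\ref{theo:Popov}) with a dimension estimate for the locus of points whose stabilizer contains a nilpotent element. First I would reduce the assertion to a purely Lie-algebraic claim. Let $H$ be the s.g.p., so that $\Lie(H) = \g_v := \Lie(G_v)$ for all $v$ in a dense open set; as $G_v$ is algebraic, $\g_v$ is an algebraic Lie algebra. Note the hypothesis $\idx_S(V)\geq 1$ forces every simple factor to act nontrivially, so $d\rho$ is injective on $\g$ and the nilpotent elements of $\g$ are exactly those $X$ with $d\rho(X)$ nilpotent. Writing the identity component of $\g_v$ via a Levi decomposition $\mathfrak l \ltimes \mathfrak u$, the nilradical $\mathfrak u$ consists of nilpotent elements, and if $[\mathfrak l,\mathfrak l]\neq 0$ it contains nilpotent root vectors. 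Hence $\g_v$ is the Lie algebra of a torus if and only if $\g_v \cap \nullcone = \{0\}$, where $\nullcone\subseteq\g$ is the nilpotent cone. Thus it suffices to show that for generic $v$ the stabilizer $\g_v$ contains no nonzero nilpotent element; granting this, $\Lie(H)$ is toral, $H$ is reductive, and Theorem~\ref{theo:Popov} yields generic polystability.

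To control the ``bad'' locus I would introduce the incidence variety
\[
W = \{(e,v)\in \nullcone\times V \ :\ e\neq 0,\ d\rho(e)v = 0\},
\]
and set $B = \pi_V(W)$, its projection to $V$. By construction $B$ is exactly the set of $v$ with $\g_v\cap(\nullcone\setminus\{0\})\neq\emptyset$, so the goal becomes $\dim\overline B < \dim V$. As $G$ has finitely many nilpotent orbits, it is enough to bound $\dim\overline{G\cdot V^e}$ for each nonzero nilpotent $e$, where $V^e = \ker d\rho(e)$. Here the Jacobson--Morozov theorem enters: complete $e$ to an $\mathfrak{sl}_2$-triple $(e,h,f)$ and grade $\g=\bigoplus_j\g_j$ and $V=\bigoplus_j V_j$ by the eigenvalues of $\ad(h)$ and $d\rho(h)$. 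Standard $\mathfrak{sl}_2$-theory expresses the relevant dimensions through $g_j=\dim\g_j$ and $v_j=\dim V_j$: one has $\dim(G\cdot e) = \dim\g - (g_0+g_1)$ and $\dim V^e = v_0+v_1$.

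The index hypothesis enters through the defining trace-form identity evaluated at the neutral element $h$. If $e$ lies in the simple factor $S$, then
\[
\sum_j j^2 v_j = \tr\bigl(d\rho(h)^2\bigr) = \idx_S(V)\,\tr\bigl(\ad(h)^2\bigr) = \idx_S(V)\sum_j j^2 g_j ,
\]
so $\idx_S(V)\geq 1$ yields the weighted inequality $\sum_j j^2 v_j \geq \sum_j j^2 g_j$. The heart of the argument, and the step I expect to be the main obstacle, is to convert this weighted inequality into the bound $\dim\overline{G\cdot V^e} < \dim V$ for every nonzero nilpotent $e$. The subtlety is that the naive bundle estimate $\dim\overline{G\cdot V^e}\leq \dim(G\cdot e)+\dim V^e$ does not suffice: for the adjoint representation (where $\idx_S(\g)=1$) it only gives equality with $\dim V$, and one must instead account for the positive-dimensional generic fibers of $\pi_V$ over $B$, i.e.\ for $\dim\bigl((G\cdot e)\cap\g_v\bigr)$. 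It is precisely the borderline case $\idx_S(V)=1$ that blocks the stronger conclusion $\dim H=0$ of Theorem~\ref{thm:ave}, which is why one expects only polystability here. Organizing this refined count over the nilpotent orbits, and extracting the needed strictness from the weighted inequality together with the fiber correction, is where the genuine work lies.

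Finally, once $\dim\overline B < \dim V$ is established, its complement is a dense open set on which $\g_v\cap\nullcone=\{0\}$, so $\g_v$ is toral by the first paragraph. For a product $G=\prod_i S_i$ one runs the same argument, writing a nilpotent as $e=\sum_i e_i$ across the commuting factors (which indeed assembles an $\mathfrak{sl}_2$-triple) and applying $\idx_{S_i}(V)\geq 1$ factor by factor via additivity of the index, the cross terms $\tr\bigl(d\rho(h_i)d\rho(h_{i'})\bigr)$ vanishing for $i\neq i'$. This shows $\Lie(H)$ is the Lie algebra of a torus; in particular $H$ is reductive and $V$ is generically $G$-polystable by Theorem~\ref{theo:Popov}.
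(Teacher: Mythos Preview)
The paper does not prove this theorem at all: it is quoted verbatim from \'Elashvili's 1972 paper \cite{Elashvili} and used as a black box (as is the companion Theorem~\ref{thm:ave} from \cite{ave}). So there is no ``paper's own proof'' to compare against; the authors only invoke the statement.

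Your sketch is, in outline, faithful to how \'Elashvili and Andreev--Vinberg--\'Elashvili actually argue: reduce to showing that a generic $\g_v$ contains no nonzero nilpotent, stratify the bad locus by nilpotent orbits, complete each $e$ to an $\mathfrak{sl}_2$-triple, and compare the $h$-weight multiplicities on $V$ and on $\g$ via the defining identity for the index. Your observation that the naive estimate $\dim(G\cdot e)+\dim V^e$ only gives $\leq \dim V$ in the borderline case, and that one must account for the positive-dimensional fibers of $\pi_V$, is exactly the point where \'Elashvili's argument does real work. You are right to flag this as the genuine obstacle; what you have written is a roadmap rather than a proof, and the missing ingredient is the careful counting (in \'Elashvili's paper, an explicit inequality relating $\sum_{j\geq 1}(v_j-g_j)$ to the fiber dimension, obtained from the $\mathfrak{sl}_2$ string structure and the hypothesis $\idx_S(V)\geq 1$) that closes this gap. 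One small correction: in the middle of your argument you assume $e$ lies in a single simple factor~$S$, but a general nilpotent in $\g=\bigoplus_i\mathfrak s_i$ decomposes as $e=\sum_i e_i$; the trace identity then reads $\sum_j j^2 v_j=\sum_i \idx_{S_i}(V)\tr(\ad(h_i)^2)$, and one needs each summand to dominate its counterpart in $\sum_j j^2 g_j=\sum_i\tr(\ad(h_i)^2)$, which is what you say at the end but not where the computation first appears.
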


Just to put these results in context, let us consider the tensor action, i.e., the action of $G = \smash{\prod_{i=1}^k \SL_{d_k}}$ on $\C^{d_1,\dots,d_k;m}$. In this case, $G$ is a connected semisimple group and its simple normal subgroups are just $\SL_{d_1},\SL_{d_2},\dots,\SL_{d_k}$ and the index for each $\SL_{d_i}$ is $\frac{m \prod_{j \neq i} d_j}{2 d_i}$.

Finally, Elashvili has classified all irreducible representations that satisfy the hypotheses of Theorem~\ref{thm:elashvili} but are not generically stable, see \cite[Theorem 9]{Elashvili} and Theorem~\ref{thm:elashvili classification} below.

\section{Castling transforms}\label{sec:castling}
In this section, we take $\F = \R$ or $\C$.
Let $\rho\colon G \rightarrow \GL(V)$ be an $n$-dimensional representation of an algebraic group $G$.
We will assume $\rho(G) \subseteq \SL(V)$.
For $0 < k < n$, we have a natural action of $G \times \SL_k$ on $V \otimes \F^k$, where $G$ acts on $V$ and $\SL_k$ acts on $\C^k$.
Similarly, we have an action of $G$ on $V^*$ and $\SL_{n-k}$ on $\F^{n-k}$, which together gives an action of $G \times \SL_{n-k}$ on $V^* \otimes \F^{n-k}$.
We refer to the action of $G \times \SL_{n-k}$ on $V^* \otimes \F^{n-k}$ as a {\em castling transform} of the action of $G \times \SL_k$ on $V \otimes \F^k$.

The main feature of castling transforms is that we get a bijection between the $G \times \SL_k$-orbits in a non-empty Zariski-open subset of $V \otimes \F^k$ and the $G \times \SL_{n-k}$-orbits in a non-empty Zariski-open subset of $V^* \otimes \F^{n-k}$.
Moreover, this bijection of orbits preserves stabilizers up to isomorphism.
Hence, when $\F = \C$, the stabilizer in general position is preserved under castling transforms.
Moreover, generic semistability/polystability/stability will also be preserved under castling transforms.
We will now explain all this in more detail, but first we need to recall Grassmannians.

\subsection{Grassmannians}
Let $\F = \R$ or $\C$.
Suppose $V$ is an $n$-dimensional vector space over $\F$.
Let~$\Gr(k,V)$ denote the Grassmannian of $k$-planes in $V$.
It is naturally embedded in $\Pp(\smash{\bigwedge^k(V)})$ as a closed subvariety cut out by the Pl\"ucker relations, where $\smash{\bigwedge^k(V)}$ denotes the $k^{\text{th}}$ exterior power of~$V$.

This embedding is constructed as follows.
Identify $V$ with $\F^n$ by choosing a basis $e_1,\dots,e_n$.
Then, a basis for $\smash{\bigwedge^k(V)}$ is $\{e_{i_1} \wedge e_{i_2} \wedge \dots \wedge e_{i_k} \ |\ 1 \leq i_1 < i_2 < \dots < i_k \leq n\}$.
For any subset~$I \subseteq [n]$ of size $k$, we write $e_I$ to denote $e_{i_1} \wedge e_{i_2} \wedge \dots \wedge e_{i_k}$ where $I = \{i_1,\dots,i_k\}$ with the $i_j$'s in increasing order.
We write $\Delta_I$ to denote the coordinate corresponding to $e_I$.
Now, for any subspace $L$ of $V$ of dimension $k$, take independent vectors $l_1,\dots,l_k$ in $L$ and consider the point $[l_1 \wedge l_2 \wedge \dots \wedge l_k] \in \Pp(\smash{\bigwedge^k(V)})$.
This point is independent of the choice of $l_i$ and only depends on the subspace $L$.
Thus, we obtain an injective map $\Gr(k,V) \rightarrow \Pp(\smash{\bigwedge^k(V)})$ whose image is a closed subvariety.
This map is called the Pl\"ucker embedding and endows the Grassmannian with the structure of a projective variety.
We refer to \cite{Fulton,Weyman,Procesi-book} for more details on Grassmannians.

The affine cone over the Grassmannian $\widehat\Gr(k,V)$ is a closed subvariety of $\smash{\bigwedge^k(V)}$.
Note that $\smash{\widehat\Gr(k,V)} = \{v_1 \wedge v_2 \wedge \dots \wedge v_k \ |\ v_i \in V\}$.
If the $v_i$'s are linearly dependent, then $v_1 \wedge v_2 \wedge \dots \wedge v_k = 0$, otherwise it is nonzero.
Let $\{e_1,\dots,e_k\}$ denote the standard basis for $\F^k$, and define
\begin{align}\label{eq:U}
  U = \left\{{\textstyle\sum_{i=1}^k} v_i \otimes e_i \in V \otimes \F^k \ | \ v_1,\dots,v_k \text{ are linearly independent}\right\}.
\end{align}
Then, we have a map
\begin{align*}
\pi = \pi_{k,V} \colon U \longrightarrow \widehat\Gr(k,V) \setminus \{0\}, \qquad
{\textstyle\sum_{i=1}^k v_i \otimes e_i} \longmapsto v_1 \wedge v_2 \wedge \dots \wedge v_k.
\end{align*}

We claim that $U$ is a Zariski-locally trivial principal $\SL_k$-bundle over $\widehat\Gr(k,V) \setminus \{0\}$.
It is straightforward to see that it is a principal $\SL_k$-bundle, because $v_1 \wedge v_2 \wedge \dots \wedge v_k = w_1 \wedge w_2 \wedge \dots \wedge w_k$ if and only if there is a matrix $A = (a_{ij}) \in \SL_k$ such that $\sum_i a_{ij} v_j = w_i$ for all $i$.
To see that is Zariski-locally trivial needs an explanation.
A similar result, namely that $U$ is a Zariski-locally trivial principal $\GL_k$-bundle over $\Gr(k,V)$ is well known, see e.g., \cite[pg.~511]{Procesi-book}.
We modify their argument appropriately.

First, we note that $\widehat\Gr(k,V) \setminus \{0\}$ is covered by affine open subsets $\{X_I : I \subseteq [n], |I| = k\}$, where $X_I \coloneqq \{p\ |\ \Delta_I(p) \neq 0\}$.
If we identify $V$ with $\F^n$ as mentioned above, $U$ can be viewed as the $k \times n$ matrices of full rank.
For a matrix $M \in \Mat_{k,n}$, and a subset $I \subseteq [n]$ of size $k$, let $M_I$ denote the $k \times k$ submatrix of $M$ obtained by considering the columns labeled by elements in $I$, and let $p_I(M) = \det(M_I)$.
Then $\pi^{-1}(X_I) = \{M \in \Mat_{k,n} \ |\ p_I(M) \neq 0\}$.
Without loss of generality, we can take $I = \{1,2,\dots,k\}$, so we have an isomorphism $\pi^{-1}(X_I) \rightarrow \Mat_{k,n-k} \times \F^* \times \SL_k$ given by $M = [A \ |\ B] \mapsto (DA^{-1}B ,\det(A), AD^{-1})$ where $D$ is the diagonal matrix with diagonal entries $(\det(A), 1,1,\dots,1)$. The map in the reverse direction is $(P,\lambda,Q) \mapsto [Q D \ |\ QP]$ where $D = {\rm diag}(\lambda,1,\dots,1)$. Next, observing that $X_I \cong \Mat_{k,n-k} \times \F^*$\footnote{It is well known in the projective setting that the locus where $\Delta_I(p) \neq 0$ is isomorphic to $\Mat_{k,n-k}$, and we are just pulling back to the affine cone.} gives us an isomorphism $\pi^{-1}(X_I) \longrightarrow X_I \times \SL_k$.

Everything we said above also works if you consider the Euclidean topology because Zariski-open subsets are open in the Euclidean topology and polynomial maps are continuous in the Euclidean topology as well.
Hence, $U$ is a locally trivial principal $\SL_k$-bundle over $\widehat\Gr(k,V) \setminus \{0\}$ in the Euclidean topology as well.

The projection of a locally-trivial bundle onto its base is an open map. One can check this condition on a trivializing cover of the base. In other words, it suffices to check that projection of a trivial bundle onto its base is open. For the Euclidean topology, it is well known that projection maps are open. For the Zariski topology, projection maps are also open. When the underlying field is algebraically closed, this follows from flatness, but remains true even when the underlying field is not algebraically closed, see Appendix~\ref{app:projection} for a proof.

To summarize, we get the following result:

\begin{lemma} \label{lem.quot.map}
Let $V$, $U$, and $\pi_{k,V}$ be defined as above.
Then $U$ is a Zariski-locally trivial principal $\SL_k$-bundle over $\smash{\widehat\Gr(k,V)} \setminus \{0\}$ via the map $\pi_{k,V}$.
In particular, $\pi_{k,V}$ is an open map (and also a quotient map) when considering either the Zariski or Euclidean topology.
\end{lemma}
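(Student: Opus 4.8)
The plan is to establish the principal-bundle structure fiberwise and then trivialize over an explicit affine cover, formalizing the discussion preceding the statement. First I would verify that the fibers of $\pi_{k,V}$ are precisely the $\SL_k$-orbits. Writing a point of $U$ as the full-rank matrix $M \in \Mat_{k,n}$ whose rows are $v_1,\dots,v_k$, two ordered bases $\{v_i\}$ and $\{w_i\}$ of the same $k$-plane $L = \spa(v_1,\dots,v_k)$ are related by a unique $A \in \GL_k$ via $w_i = \sum_j a_{ij} v_j$, and then $w_1 \wedge \dots \wedge w_k = \det(A)\, v_1 \wedge \dots \wedge v_k$. Hence $\pi_{k,V}(M) = \pi_{k,V}(M')$ forces the change of basis to lie in $\SL_k$, so each fiber is a single $\SL_k$-orbit and the action is free. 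This gives the abstract principal-$\SL_k$-bundle structure.

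The heart of the argument is Zariski-local triviality over $\widehat\Gr(k,V) \setminus \{0\}$. I would use the standard affine cover by the charts $X_I = \{p : \Delta_I(p) \neq 0\}$ for $|I| = k$, noting $\pi_{k,V}^{-1}(X_I) = \{M \in \Mat_{k,n} : p_I(M) \neq 0\}$ where $p_I(M) = \det(M_I)$. Taking $I = \{1,\dots,k\}$ and writing $M = [A \mid B]$ with $A \in \GL_k$, I would exhibit the explicit isomorphism $\pi_{k,V}^{-1}(X_I) \to \Mat_{k,n-k} \times \F^* \times \SL_k$, $M \mapsto (D A^{-1} B, \det A, A D^{-1})$ with $D = \diag(\det A, 1,\dots,1)$, whose inverse sends $(P,\lambda,Q) \mapsto [Q D \mid Q P]$ with $D = \diag(\lambda,1,\dots,1)$. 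Composing with the known identification $X_I \cong \Mat_{k,n-k} \times \F^*$ pulled back from the projective chart yields a trivialization $\pi_{k,V}^{-1}(X_I) \cong X_I \times \SL_k$ compatible with the projection and the right $\SL_k$-action. The one genuinely fiddly point is that the normalization $D^{-1}$ is chosen precisely so that $\det(A D^{-1}) = \det A / \det D = 1$, placing the factor in $\SL_k$, and that the two displayed maps are mutually inverse morphisms; this is a direct matrix computation. Since every map is given by polynomials and rational inverses on loci where determinants are nonzero, the same formulas are continuous in the Euclidean topology, giving local triviality there as well.

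Finally I would deduce the ``in particular'' clause. Openness of $\pi_{k,V}$ is local on the base, so it suffices to check it on the trivializing cover, where $\pi_{k,V}$ becomes the projection $X_I \times \SL_k \to X_I$; projection off a factor is open in both topologies, hence $\pi_{k,V}$ is open, and being a continuous open surjection it is automatically a quotient map. The main obstacle — and the only genuinely nontrivial input — is the Zariski-openness of this projection when $\F = \R$: over an algebraically closed field it follows from flatness of the projection, but for a non-closed field it requires a separate argument, which I would defer to Appendix~\ref{app:projection}.
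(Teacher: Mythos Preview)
Your proposal is correct and follows the paper's approach essentially verbatim: the same fiberwise $\SL_k$-orbit identification, the same explicit trivialization $M=[A\mid B]\mapsto(DA^{-1}B,\det A,AD^{-1})$ over the Pl\"ucker charts $X_I$, and the same reduction of openness to the projection $X_I\times\SL_k\to X_I$ with the Zariski case over $\R$ deferred to Appendix~\ref{app:projection}. There is nothing to add.
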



\subsection{Castling transforms}
Let $\rho\colon G \rightarrow \GL(V)$ be a representation of an algebraic group $G$ and we will assume $\rho(G) \subseteq \SL(V)$.
Let $\dim(V) = n$.
We have an action of $G \times \SL_k$ on $V \otimes \F^k$ and an action of $G \times \SL_{n-k}$ on $V^* \otimes \F^{n-k}$. Let
\[
U = \left\{{\textstyle\sum_{i =1}^k} v_i \otimes e_i \in V \otimes \F^k\ | \ v_1,\dots,v_k \text{ are linearly independent} \right\} \subseteq V \otimes \F^k,
\]
as in~\eqref{eq:U} and let
\[
U' = \left\{{\textstyle\sum_{i=1}^{n-k}} w_i \otimes e_i \in V^* \otimes \F^{n-k}\ | \ w_1,\dots,w_{n-k} \text{ are linearly independent} \right\} \subseteq V^* \otimes \F^{n-k}.
\]
Since $U$ is a principal $\SL_k$-bundle over $\widehat\Gr(k,V) \setminus \{0\}$, we have a bijection between the $\SL_k$-orbits in~$U$ and the points of $\widehat\Gr(k,V) \setminus \{0\}$.
This bijection is $G$-equivariant since $\pi_{k,V}$ is $G$-equivariant 
and the actions of $G$ and of $\SL_k$ on $V \otimes \C^k$ commute.
So, we have $G$-equivariant bijections:
\begin{align}\label{eq:bijections}
  \text{$\SL_k$-orbits in $U$}
\ \longleftrightarrow\ %
\widehat{\Gr}(k,V) \setminus \{0\}
\ \longleftrightarrow\ %
\widehat{\Gr}(n-k,V^*) \setminus \{0\}
\ \longleftrightarrow\ %
\text{$\SL_{n-k}$-orbits in $U'$}
\end{align}
The first bijection was explained above and the last bijection follows by the same argument.
The middle bijection comes from the well understood $\SL(V)$-equivariant isomorphism $\smash{\bigwedge^k(V)} \cong \smash{\bigwedge^{n-k}(V^*)}$.
The following result is implicit in \cite{Elashvili}, but we furnish a proof for completeness.

\begin{lemma}\label{stab.equal}
Let $T \in U$.
Then, we have an isomorphism of algebraic groups
\[
\Stab_G(\pi_{k,V}(T)) \cong \Stab_{G \times \SL_k} (T).
\]
\end{lemma}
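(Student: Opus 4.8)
The plan is to show that the projection $p\colon G \times \SL_k \to G$ onto the first factor restricts to the asserted isomorphism, with $\Stab_{G\times\SL_k}(T)$ mapping isomorphically onto $\Stab_G(\pi_{k,V}(T))$. The entire argument rests on the structure of $\pi_{k,V}$ recorded in Lemma~\ref{lem.quot.map}: it is a Zariski-locally trivial principal $\SL_k$-bundle (in particular $\SL_k$ acts freely on $U$, and each fiber is an $\SL_k$-torsor), it is $\SL_k$-invariant, and it is $G$-equivariant, where moreover the $G$- and $\SL_k$-actions on $V \otimes \F^k$ commute. Write $b = \pi_{k,V}(T)$.

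First I would check that $p$ sends $\Stab_{G\times\SL_k}(T)$ into $\Stab_G(b)$. If $(g,A)\cdot T = T$, then applying $\pi_{k,V}$ and using its $\SL_k$-invariance together with its $G$-equivariance gives $b = \pi_{k,V}(T) = \pi_{k,V}((g,A)\cdot T) = g\cdot \pi_{k,V}(T) = g\cdot b$, so $g \in \Stab_G(b)$. This restriction of $p$ is manifestly a homomorphism of algebraic groups. Next, for injectivity, suppose $(g,A)\in\Stab_{G\times\SL_k}(T)$ with $g=1$; then $A\cdot T = T$, and since $\SL_k$ acts freely on $U$ and $T\in U$, we get $A=1$. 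For surjectivity, take $g\in\Stab_G(b)$. Then $\pi_{k,V}(g\cdot T) = g\cdot b = b = \pi_{k,V}(T)$, so $g\cdot T$ and $T$ lie in the same fiber of $\pi_{k,V}$; as this fiber is an $\SL_k$-torsor there is a \emph{unique} $A\in\SL_k$ with $A\cdot(g\cdot T)=T$, and because the two actions commute this reads $(g,A)\cdot T = T$. Hence $(g,A)$ is a preimage of $g$, and $p$ is a bijective homomorphism onto $\Stab_G(b)$.

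Finally I must argue that the inverse $g\mapsto (g,A(g))$ is itself a morphism of algebraic groups, which upgrades the bijective homomorphism to an isomorphism. This is the one point that genuinely uses Zariski-local triviality from Lemma~\ref{lem.quot.map}: over a trivializing open neighborhood of $b$ the unique solution $A(g)$ of the torsor equation is expressed through the local section/transition data and is therefore regular in $g$. Equivalently, and more concretely, writing $T=\sum_i v_i\otimes e_i$ with $v_1,\dots,v_k$ linearly independent, the identity $(g,A)\cdot T = T$ forces $g$ to preserve $L=\operatorname{span}(v_1,\dots,v_k)$ and reads $g v_i = \sum_j (A^{-1})_{ij}\, v_j$; thus $A^{-1}$ encodes how $g$ acts on the frame $(v_1,\dots,v_k)$, its entries are linear functions of the matrix entries of $\rho(g)$, and inversion on $\SL_k$ is a morphism, so $g\mapsto A(g)$ is regular on $\Stab_G(b)$.

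This concrete description also makes the determinant bookkeeping transparent: $g\in\Stab_G(b)$ precisely when $gL=L$ and $\det(g|_L)=1$, which matches the constraint $A\in\SL_k$, confirming that the bijection is correctly cut out on both sides. I expect the verification that the inverse is regular to be the only real obstacle; the bijection itself is essentially formal once the principal-$\SL_k$-bundle structure of $\pi_{k,V}$ (commuting $G$-action, freeness, torsor fibers) is in hand.
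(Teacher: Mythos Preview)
Your proof is correct and follows essentially the same approach as the paper: both use the first-factor projection $p\colon G\times\SL_k\to G$, establish bijectivity from the principal $\SL_k$-bundle structure of $\pi_{k,V}$, and verify that the inverse is a morphism via the explicit description $g\cdot v_i=\sum_j c_{ij}(g)\,v_j$ with $A=C^{-1}$. The only cosmetic difference is that you separate injectivity and surjectivity while the paper argues directly that each $g$ has a unique preimage; your additional remark on $\det(g|_L)=1$ is a pleasant consistency check not present in the paper.
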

\begin{proof}
This holds since $\pi_{k,V}$ is a $G$-equivariant principal $\SL_k$-bundle.
Indeed, let $p\colon G \times \SL_k \rightarrow G$ denote the projection onto the first factor.
It is easy to see that $p(\Stab_{G \times \SL_k} (T)) \subseteq \Stab_G(\pi_{k,V}(T))$.
Now suppose $g \in \Stab_G (\pi_{k,V}(T))$.
Then, $\pi_{k,V}(T) = g \cdot \pi_{k,V}(T)$ implies that $\pi_{k,V}(T) = \pi_{k,V}(g \cdot T)$ by $G$-equivariance.
Since $\pi_{k,V}$ is a principal $\SL_k$-bundle, it follows that there exists a unique $A\in\SL_k$ such that $A \cdot (g \cdot T) = T$, i.e., $(g,A) \cdot T = T$.
Thus we have proved that every $g \in \Stab_G(\pi_{k,V}(T))$ has a unique preimage under $p$ in $\Stab_{G \times \SL_k} (T)$.
We conclude that $p$ restricted to $\Stab_{G \times \SL_k} (T)$ is a (group) isomorphism onto its image, which is $\Stab_G(\pi_{k,V}(T))$.

To establish that this is an isomorphism of algebraic groups (over $\F$), we need to establish that it is an isomorphism of varieties.
To do so, we give a map in the reverse direction as follows.
Write~$T = \sum_{i=1}^k v_i \otimes e_i$.
Let $g \in \Stab_G(\pi_{k,V}(T))$.
Since $g$ stabilizes the span of $v_1,\dots,v_k$, we get that $g \cdot v_i = \sum_j c_{i,j}(g) \, v_j$ where the $c_{i,j}(g)$ are regular functions on $\Stab_G(\pi_{k,V}(T))$.
Moreover, the matrix $C = (c_{i,j}(g))_{1\leq i,j \leq k}$ is invertible.
Then $(g,\smash{C^{-1}})$ is the unique preimage of $g$ in $\Stab_{G \times \SL_k} (T)$ under $p$.
Thus the map $g \mapsto (g,\smash{C^{-1}})$ is the inverse of $p$ restricted to $\Stab_{G \times \SL_k}(T)$, and it is clearly a morphism of algebraic varieties.
\end{proof}

As a consequence of the bijections~\eqref{eq:bijections} and Lemma~\ref{stab.equal}, we thus obtain the following corollaries.

\begin{corollary}\label{cor.cas.stab.pres}
We have a natural bijection between the $G\times \SL_k$-orbits in $U$ and the $G \times \SL_{n-k}$ orbits in $U'$ that preserves stabilizers (up to isomorphism).
\end{corollary}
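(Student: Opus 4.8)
The plan is to assemble the corollary from the two ingredients already in place: the chain of $G$-equivariant bijections in~\eqref{eq:bijections} and the stabilizer identification of Lemma~\ref{stab.equal}. The only genuinely new observation required is that each bijection between $\SL_k$- (resp.\ $\SL_{n-k}$-) orbits and points of the Grassmannian cone descends, after passing to $G$-orbits, to a bijection between $G \times \SL_k$- (resp.\ $G \times \SL_{n-k}$-) orbits and $G$-orbits on the cone.

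For the bijection itself, I would first record that $U$ is stable under both the $G$-action and the $\SL_k$-action (linear independence of $v_1,\dots,v_k$ is preserved by the invertible maps $\rho(g)$ and by the elements of $\SL_k$), and that these two actions commute, since $G$ acts only on the $V$-factor and $\SL_k$ only on the $\F^k$-factor of $V \otimes \F^k$. Consequently a $G \times \SL_k$-orbit in $U$ is exactly a union of $\SL_k$-orbits forming a single $G$-orbit on the set of $\SL_k$-orbits; under the first bijection of~\eqref{eq:bijections} this identifies the $G \times \SL_k$-orbits in $U$ with the $G$-orbits in $\widehat\Gr(k,V)\setminus\{0\}$. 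The same argument on the dual side identifies the $G \times \SL_{n-k}$-orbits in $U'$ with the $G$-orbits in $\widehat\Gr(n-k,V^*)\setminus\{0\}$. The middle map of~\eqref{eq:bijections}, induced by the $\SL(V)$-equivariant (hence, since $\rho(G) \subseteq \SL(V)$, $G$-equivariant) isomorphism $\bigwedge^k V \cong \bigwedge^{n-k} V^*$, carries $G$-orbits to $G$-orbits bijectively. Composing the three identifications produces the claimed bijection of $G \times \SL_k$-orbits with $G \times \SL_{n-k}$-orbits.

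For the preservation of stabilizers, I would trace a single representative through the chain. Pick $T \in U$ and set $\bar T = \pi_{k,V}(T)$. Lemma~\ref{stab.equal} gives $\Stab_{G \times \SL_k}(T) \cong \Stab_G(\bar T)$. Writing $\phi$ for the middle $G$-equivariant bijection, $G$-equivariance forces $\Stab_G(\bar T) = \Stab_G(\phi(\bar T))$ as subgroups of $G$, because $g \cdot \bar T = \bar T$ if and only if $g \cdot \phi(\bar T) = \phi(\bar T)$. Now choose any $T' \in U'$ with $\pi_{n-k,V^*}(T') = \phi(\bar T)$; applying Lemma~\ref{stab.equal} on the dual side gives $\Stab_{G \times \SL_{n-k}}(T') \cong \Stab_G(\phi(\bar T))$. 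Chaining these three isomorphisms yields $\Stab_{G \times \SL_k}(T) \cong \Stab_{G \times \SL_{n-k}}(T')$, which is precisely the asserted preservation of stabilizers along corresponding orbits. Since the stabilizers of points in a common orbit are conjugate, the isomorphism class is well defined on orbits and independent of the chosen representatives.

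The construction is essentially a bookkeeping exercise, so I do not expect a serious obstacle; the one place demanding care is the reduction from $\SL_k$-orbits to $G \times \SL_k$-orbits, which hinges entirely on the commuting of the two group actions and on $U$ being invariant under both. A secondary point worth verifying is that $\Stab_G(\bar T) = \Stab_G(\phi(\bar T))$ is an honest equality of subgroups of $G$, not merely an abstract isomorphism, so that the two applications of Lemma~\ref{stab.equal} compose cleanly.
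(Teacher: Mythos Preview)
Your proposal is correct and follows exactly the route the paper intends: the corollary is stated as an immediate consequence of the bijections~\eqref{eq:bijections} together with Lemma~\ref{stab.equal}, and you have simply spelled out the details of that deduction. Your careful check that $\Stab_G(\bar T) = \Stab_G(\phi(\bar T))$ is a genuine equality of subgroups (so the two applications of Lemma~\ref{stab.equal} compose) is exactly the bookkeeping the paper leaves implicit.
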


\begin{corollary}\label{cor.cas.stab}
Let $\F = \C$.
Then the stabilizer in general position for the action of $G \times \SL_k$ on $V \otimes \C^k$ is isomorphic to the stabilizer in general position for the action of $G \times \SL_{n-k}$  on $V^* \otimes \C^{n-k}$.
\end{corollary}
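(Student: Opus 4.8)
The plan is to combine the stabilizer-preserving orbit bijection of Corollary~\ref{cor.cas.stab.pres} with the openness of the quotient maps $\pi_{k,V}$ and $\pi_{n-k,V^*}$ in order to exhibit a \emph{single} pair of matched tensors at which both stabilizers in general position are realized simultaneously. First I would fix notation: let $H$ denote the s.g.p.\ for the action of $G \times \SL_k$ on $V \otimes \C^k$, whose existence is guaranteed by Luna's slice theorem; by definition there is a non-empty Zariski-open subset $W \subseteq V \otimes \C^k$ on which the stabilizer is isomorphic to $H$. Likewise let $H'$ be the s.g.p.\ for $G \times \SL_{n-k}$ on $V^* \otimes \C^{n-k}$, realized on a non-empty Zariski-open $W' \subseteq V^* \otimes \C^{n-k}$. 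Since $V \otimes \C^k$ is an affine space, hence irreducible, and both $W$ and $U$ are non-empty Zariski-open (therefore dense), $W \cap U$ is non-empty open; similarly $W' \cap U'$ is non-empty open.

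Next I would transport these generic loci into the affine cones over the Grassmannians. Because $\pi_{k,V}$ is an open map (Lemma~\ref{lem.quot.map}), the image $A \coloneqq \pi_{k,V}(W \cap U)$ is a non-empty Zariski-open subset of $\widehat\Gr(k,V) \setminus \{0\}$, and, by the same token, $B' \coloneqq \pi_{n-k,V^*}(W' \cap U')$ is non-empty open in $\widehat\Gr(n-k,V^*) \setminus \{0\}$. The middle isomorphism in~\eqref{eq:bijections}, arising from the $\SL(V)$-equivariant (hence $G$-equivariant) identification $\bigwedge^k(V) \cong \bigwedge^{n-k}(V^*)$, carries $A$ to a non-empty Zariski-open subset $\widetilde A \subseteq \widehat\Gr(n-k,V^*) \setminus \{0\}$. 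Since the Grassmannian is irreducible, so is the cone $\widehat\Gr(n-k,V^*) \setminus \{0\}$, and therefore the two non-empty open subsets $\widetilde A$ and $B'$ must meet; I would choose a point $q$ in their intersection.

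Finally I would read off the conclusion from this single point $q$. As $q \in B'$, there is $T' \in W' \cap U'$ with $\pi_{n-k,V^*}(T') = q$, and $T' \in W'$ forces $\Stab_{G \times \SL_{n-k}}(T') \cong H'$. As $q \in \widetilde A$, its preimage $p$ under the middle isomorphism lies in $A$, so $p = \pi_{k,V}(T)$ for some $T \in W \cap U$, and $T \in W$ forces $\Stab_{G \times \SL_k}(T) \cong H$. Applying Lemma~\ref{stab.equal} on both sides, together with the $G$-equivariance of the identification of the two cones (so that $\Stab_G(p) = \Stab_G(q)$ after this identification), I obtain
\[
H \cong \Stab_{G \times \SL_k}(T) \cong \Stab_G(p) = \Stab_G(q) \cong \Stab_{G \times \SL_{n-k}}(T') \cong H',
\]
which is exactly the assertion; equivalently, this is the content of the bijection of Corollary~\ref{cor.cas.stab.pres} applied to the matched orbits of $T$ and $T'$.

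The main obstacle—and really the only nontrivial input—will be ensuring that the two generic loci actually \emph{correspond} under the castling bijection, rather than merely existing independently on the two sides. This is what forces the two ingredients above: the openness of $\pi_{k,V}$ and $\pi_{n-k,V^*}$ (so that the generic loci remain open after projecting to the Grassmannian cones) and the irreducibility of the Grassmannian cone (so that two non-empty open subsets are guaranteed to intersect). Without these, one could only conclude that each side separately possesses an s.g.p., not that a common matched orbit realizes both at once.
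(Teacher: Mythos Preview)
Your argument is correct and is precisely the fleshed-out version of what the paper leaves implicit: the paper simply states Corollary~\ref{cor.cas.stab} as an immediate consequence of the bijections~\eqref{eq:bijections} and Lemma~\ref{stab.equal}, without spelling out why the two generic loci can be made to match. Your use of the openness of $\pi_{k,V}$, $\pi_{n-k,V^*}$ (Lemma~\ref{lem.quot.map}) together with irreducibility of the Grassmannian cone is exactly the missing ingredient, and the chain $H \cong \Stab_{G \times \SL_k}(T) \cong \Stab_G(p) = \Stab_G(q) \cong \Stab_{G \times \SL_{n-k}}(T') \cong H'$ is the intended application of Lemma~\ref{stab.equal} on both sides of the $G$-equivariant identification $\bigwedge^k V \cong \bigwedge^{n-k} V^*$.
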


In fact, the invariant ring is also preserved by castling transforms~\cite{SK} (see also~\cite[Prop.~2.1]{Kac}).

\begin{lemma}[\cite{SK}]\label{lem.cas.inv.ring}
Let $\F = \C$.
Then the invariant ring for the action of $G \times \SL_k$ on $V \otimes \C^k$ is (canonically) isomorphic to the invariant ring for the action of $G \times \SL_{n-k}$  on $V^* \otimes \C^{n-k}$.
\end{lemma}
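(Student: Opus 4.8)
The plan is to route both invariant rings through the common geometric object underlying the orbit bijections~\eqref{eq:bijections}, namely the punctured affine cone $\widehat\Gr(k,V)\setminus\{0\}\cong\widehat\Gr(n-k,V^*)\setminus\{0\}$, all the while keeping track of the residual $G$-action, which is what carries the actual content.

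First I would identify $\C[V\otimes\C^k]^{\SL_k}$ with $\C[\widehat\Gr(k,V)\setminus\{0\}]$ as $G$-algebras. Writing elements of $V\otimes\C^k$ as $k$-tuples of vectors in $V$ (equivalently, as $k\times n$ matrices), the complement $V\otimes\C^k\setminus U$ is the locus where these vectors are linearly dependent, a determinantal variety of codimension $n-k+1\geq2$ since $k<n$. As $V\otimes\C^k$ is smooth, hence normal, regular functions extend across this locus, so restriction is a $(G\times\SL_k)$-equivariant isomorphism $\C[V\otimes\C^k]\isom\C[U]$; taking $\SL_k$-invariants gives $\C[V\otimes\C^k]^{\SL_k}\cong\C[U]^{\SL_k}$. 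By Lemma~\ref{lem.quot.map}, $\pi_{k,V}\colon U\to\widehat\Gr(k,V)\setminus\{0\}$ is a Zariski-locally trivial principal $\SL_k$-bundle, and pullback of functions gives an isomorphism $\C[\widehat\Gr(k,V)\setminus\{0\}]\isom\C[U]^{\SL_k}$ (on a trivializing patch $W\times\SL_k$ the $\SL_k$-invariants are $\C[W]\otimes\C[\SL_k]^{\SL_k}=\C[W]$, and these glue). Since $\pi_{k,V}$ is $G$-equivariant, this is an isomorphism of $G$-algebras. The identical argument for $U'\subseteq V^*\otimes\C^{n-k}$ yields $\C[V^*\otimes\C^{n-k}]^{\SL_{n-k}}\cong\C[\widehat\Gr(n-k,V^*)\setminus\{0\}]$ as $G$-algebras.

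Next I would link the two cones. The $\SL(V)$-equivariant isomorphism $\bigwedge^k V\cong\bigwedge^{n-k}V^*$ used for the middle bijection of~\eqref{eq:bijections} carries $\widehat\Gr(k,V)$ onto $\widehat\Gr(n-k,V^*)$ and fixes the origin, so it restricts to an isomorphism of the punctured cones. Because $\rho(G)\subseteq\SL(V)$, the $G$-action factors through $\SL(V)$ and this isomorphism is $G$-equivariant; it therefore induces a $G$-algebra isomorphism $\C[\widehat\Gr(k,V)\setminus\{0\}]\cong\C[\widehat\Gr(n-k,V^*)\setminus\{0\}]$. Composing the three isomorphisms gives a $G$-equivariant algebra isomorphism $\Phi\colon\C[V\otimes\C^k]^{\SL_k}\isom\C[V^*\otimes\C^{n-k}]^{\SL_{n-k}}$.

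Finally, since the $G$- and $\SL_k$-actions on $V\otimes\C^k$ commute, one has $\C[V\otimes\C^k]^{G\times\SL_k}=\bigl(\C[V\otimes\C^k]^{\SL_k}\bigr)^G$, and likewise on the dual side; as $\Phi$ is a $G$-equivariant algebra isomorphism it restricts to an isomorphism of the $G$-fixed subalgebras, which is exactly the claim. The main obstacle is Step~1: one must combine the codimension bound (to pass from $V\otimes\C^k$ to $U$ without losing functions) with the principal-bundle descent of Lemma~\ref{lem.quot.map} (to pass from $U$ down to the base), and verify that $G$-equivariance survives both passages. This is routine but is the crux; note that it is essentially the first fundamental theorem identification of $\SL_k$-invariants with the coordinate ring of the Grassmannian cone, now tracked $G$-equivariantly.
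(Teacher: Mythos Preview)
Your argument is correct. The paper does not supply its own proof of this lemma; it is simply quoted from \cite{SK} (with a pointer to \cite[Prop.~2.1]{Kac}), so there is nothing in the paper to compare against. That said, your route---identify $\C[V\otimes\C^k]^{\SL_k}$ with $\C[\widehat\Gr(k,V)\setminus\{0\}]$ via normality plus the principal-bundle descent of Lemma~\ref{lem.quot.map}, transport through the $\SL(V)$-equivariant isomorphism $\bigwedge^k V\cong\bigwedge^{n-k}V^*$, and then take $G$-invariants---is exactly the standard geometric proof (this is essentially Kac's argument). The only spots worth a second glance are both handled: the codimension of the non-full-rank locus in $\Mat_{k,n}$ is $n-k+1\geq2$, so Hartogs applies; and the locally trivial bundle structure gives $\C[U]^{\SL_k}\cong\C[\text{base}]$ by the local computation you sketch, with $G$-equivariance inherited from that of~$\pi_{k,V}$.
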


The discussion above culminates in the following result that will be very important for us:

\begin{corollary}\label{cor:castling}
Let $\F = \R$ or $\C$.
Then $V \otimes \F^k$ is generically $G \times \SL_k$-semistable (polystable, stable) if and only if $V^* \otimes \F^{n-k}$ is generically $G \times \SL_{n-k}$-semistable (polystable, stable).
\end{corollary}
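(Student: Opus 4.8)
The plan is to reduce the entire statement to a single comparison between the behavior of a point $T\in U$ under $G\times\SL_k$ and the behavior of its image $p=\pi_{k,V}(T)=v_1\wedge\cdots\wedge v_k\in\widehat\Gr(k,V)\setminus\{0\}$ under $G$ acting on $\bigwedge^k V$, and then to transport this through the $G$-equivariant isomorphism $\bigwedge^k V\cong\bigwedge^{n-k}V^*$ together with the symmetric statement on the dual side. Concretely, I would isolate the following crux. \textbf{Claim:} for every $T\in U$ with $p=\pi_{k,V}(T)$, the point $T$ is $(G\times\SL_k)$-semistable (resp.\ polystable, stable) if and only if $p$ is $G$-semistable (resp.\ polystable, stable) as a point of $\bigwedge^k V$. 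The same statement applied to the dual data compares $T'\in U'$ with $p'\in\widehat\Gr(n-k,V^*)\setminus\{0\}$, and the $G$-equivariant isomorphism $\bigwedge^k V\cong\bigwedge^{n-k}V^*$ carries $p$ to $p'$ while preserving orbits, closures, stabilizers, and kernels; hence $p$ and $p'$ always share the same stability type.

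Granting the Claim, the corollary follows by a soft genericity argument valid for both $\F=\R$ and $\F=\C$. Since $\SL_k\subseteq G\times\SL_k$, each of the three properties is constant on the fibers of $\pi_{k,V}$ (which are exactly the $\SL_k$-orbits), so for $\star\in\{\semi,\ps,\st\}$ we have $(V\otimes\F^k)^\star\cap U=\pi_{k,V}^{-1}(S_\star)$, and by the Claim $S_\star\subseteq\widehat\Gr(k,V)\setminus\{0\}$ is precisely the $G$-$\star$-locus there. Because $U$ is a nonempty Zariski-dense open subset of the irreducible space $V\otimes\F^k$ and $\pi_{k,V}$ is an open quotient map in the Zariski topology (Lemma~\ref{lem.quot.map}), $V\otimes\F^k$ is generically $\star$ if and only if $S_\star$ contains a nonempty Zariski-open subset of $\widehat\Gr(k,V)\setminus\{0\}$. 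The identical reasoning on the dual side relates generic $\star$-ness of $V^*\otimes\F^{n-k}$ to $S'_\star$, and the isomorphism of Grassmannian cones matches $S_\star$ with $S'_\star$ and preserves ``contains a nonempty Zariski-open''. Chaining the three equivalences yields the corollary.

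The heart of the matter, and the main obstacle, is the Claim: the natural $G$-equivariant, $\SL_k$-invariant extension $\Phi\colon V\otimes\F^k\to\bigwedge^k V$, $\sum_i v_i\otimes e_i\mapsto v_1\wedge\cdots\wedge v_k$ of $\pi_{k,V}$ is far from proper, since it collapses all rank-deficient tensors to $0$, so closures do not transport formally. For semistability one inclusion is immediate by continuity: $0\in\overline{O_T}$ gives $0=\Phi(0)\in\Phi(\overline{O_T})\subseteq\overline{O_p}$. For the converse I would argue by hand: if $g_n\cdot p\to 0$, put $f_i^{(n)}=g_nv_i$; these stay linearly independent (as $g_n\in\SL(V)$), their Gram matrix $M_n$ is positive definite with $\det M_n=\lVert g_n\cdot p\rVert^2\to 0$, and choosing $A_n\in\SL_k$ with $A_n^\dagger A_n=(\det M_n)^{1/k}M_n^{-1}$ (the positive-definite square root, which has determinant one) rebalances the vectors so that $\sum_j\lVert\tilde f_j^{(n)}\rVert^2=\tr(A_nM_nA_n^\dagger)=k(\det M_n)^{1/k}\to 0$; hence $(g_n,A_n)\cdot T\to 0$ and $T$ is unstable. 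This volume-rebalancing is the one genuinely non-formal ingredient and is exactly what makes the argument uniform over $\R$ and $\C$.

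For polystability I would transport closedness of orbits across $\pi_{k,V}$ using its local triviality (Lemma~\ref{lem.quot.map}) in the Euclidean topology together with the closedness of $\widehat\Gr(k,V)$ in $\bigwedge^k V$: if $O_p$ is closed, any $T_\infty\in\overline{O_T}$ has $\Phi(T_\infty)\in\overline{O_p}=O_p\subseteq\widehat\Gr(k,V)\setminus\{0\}$, so $T_\infty\in U$ and $\pi_{k,V}(T_\infty)\in O_p$, forcing $T_\infty\in O_T$ by the principal-$\SL_k$-bundle property; conversely, if $O_T$ is closed then $0\notin\overline{O_T}$, hence $0\notin\overline{O_p}$ by the semistable case, so a sequence $g_n\cdot p\to p_\infty\neq 0$ lifts through a local section $\sigma$ of $\pi_{k,V}$ near $p_\infty$ to points $R_n=\sigma(g_n\cdot p)\in\SL_k\cdot(g_nT)\subseteq O_T$ with $R_n\to\sigma(p_\infty)\in O_T$, giving $p_\infty\in\pi_{k,V}(O_T)=O_p$. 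Finally, for stability I would combine the isomorphism $\Stab_{G\times\SL_k}(T)\cong\Stab_G(p)$ of Lemma~\ref{stab.equal} with the fact that the relevant kernels have equal dimension: a short Lie-algebra computation, using that $d\rho$ lands in $\mathfrak{sl}(V)$ and that $\bigwedge^k$ is a faithful representation of $\mathfrak{sl}(V)$ for $0<k<n$, shows that the kernel of $G\times\SL_k$ on $V\otimes\F^k$, the kernel of $G$ on $\bigwedge^k V$, and $\ker\rho$ all have dimension $\dim\ker\rho$. Since stability means exactly that the stabilizer has the same dimension as the kernel, and polystability has already been matched, stability transports as well, completing the Claim.
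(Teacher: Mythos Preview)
Your proof is correct and takes a genuinely different route from the paper's. The paper first reduces to $\F=\C$ by invoking \cite[Proposition~2.23]{DM-mle}, and then dispatches the three cases by appealing to heavy external facts: generic semistability via the Sato--Kimura isomorphism of invariant rings (Lemma~\ref{lem.cas.inv.ring}), generic polystability via Popov's criterion (Theorem~\ref{theo:Popov}) together with the preservation of the s.g.p.\ (Corollary~\ref{cor.cas.stab}), and generic stability via Corollary~\ref{cor:crit gen stab} plus a kernel-dimension computation. In contrast, you prove a sharper \emph{pointwise} statement---your Claim---directly and uniformly over $\R$ and $\C$: the Gram-determinant rebalancing for semistability, the local-section lifting through the principal $\SL_k$-bundle for polystability, and Lemma~\ref{stab.equal} plus the kernel-dimension match for stability. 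You then transport genericity through the open quotient map $\pi_{k,V}$ and the $G$-equivariant isomorphism $\bigwedge^k V\cong\bigwedge^{n-k}V^*$.

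What each approach buys: the paper's argument is short and ties the statement to standard invariant-theoretic criteria, but it outsources the real case entirely and relies on results (Popov, Sato--Kimura, existence of s.g.p.) that are themselves nontrivial. Your argument is more self-contained, avoids the reduction to $\C$, and actually proves more---a pointwise orbit-by-orbit correspondence of stability types between $U$ and $U'$, not just a generic one. The rebalancing step $A_n^\dagger A_n=(\det M_n)^{1/k}M_n^{-1}$ is the key non-formal input and is exactly what makes the argument work over both fields simultaneously. One small remark: in your stability step you reduce ``$G_v/K$ finite'' to ``$\dim G_v=\dim K$''; this is justified because $K$ is normal in $G_v$ and the quotient is an algebraic group, so zero-dimensional implies finite over both $\R$ and~$\C$, but it is worth saying explicitly.
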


\begin{proof}
By \cite[Proposition~2.23]{DM-mle}, it suffices to prove the statement for $\F = \C$.
So, let us assume that~$\F = \C$.
Generic semistability is the same as having a non-trivial invariant ring.
Hence, it follows from Lemma~\ref{lem.cas.inv.ring} that castling transforms preserve generic semistability.
The fact that castling transforms preserve generic polystability follows from Corollary~\ref{cor.cas.stab} and Theorem~\ref{theo:Popov}.

That castling transforms preserve generic stability follows similarly from Corollaries~\ref{cor.cas.stab} and~\ref{cor:crit gen stab}, provided we can show that the kernels of the two actions have the same dimension.
To see this, let $K = \ker(\rho)$, where $\rho\colon G \rightarrow \GL(V)$ is the action of $G$ on $V$.
Now, let us consider the kernel of $\tilde{\rho}\colon G \times \SL_k \rightarrow \GL(V \otimes \C^k)$.
For $(g,A) \in G \times \SL_k$, we have $\tilde{\rho}(g,A) = \rho(g) \otimes A$.
So, if $(g,A)$ is in the kernel, then $\rho(g) = c \rm I$ and $A =  c^{-1} I$ for some $c \in \C^*$.
But $A \in \SL_k$, so $c$ must be an $k^{\text{th}}$ root of unity. For each such $c$, the subvariety $H_c = \{g \in G \ |\ \rho(g) = cI\}$ is either empty or a coset of~$K$.
Since the kernel is a finite union of $H_c \times \{c^{-1} I\}$, its dimension equals the dimension of $K$.
On the other hand, the kernel for the action of $G$ on $V^*$ is also $K$, so the same argument shows that the kernel for the action of $G \times \SL_{n-k}$ on $V^* \otimes \C^{n-k}$ also has the same dimension as~$K$.
\end{proof}

For complex Gaussian group models, we saw in Theorem~\ref{theo:AKRS} that invariant-theoretic stability notions characterize the boundedness of the log-likelihood function and the existence and uniqueness of MLEs precisely.
However, for real models, the relation between generic stability and almost sure existence of a unique MLE is less tight.
To bridge this gap, we will need the following results:

\begin{lemma}\label{lem:open castling}
Suppose $P \subseteq V \otimes \F^k$ is an open subset in the Euclidean (resp.\ Zariski) topology, then $(\pi_{n-k,V^*})^{-1} \pi_{k,V} (P \cap U)$ is a non-empty open subset of $U'$ in the Euclidean (resp.\ Zariski) topology.
\end{lemma}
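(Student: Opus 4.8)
The plan is to trace the subset $P \cap U$ through the chain of maps
\[
U \xrightarrow{\ \pi_{k,V}\ } \widehat\Gr(k,V) \setminus \{0\} \;\cong\; \widehat\Gr(n-k,V^*) \setminus \{0\} \xleftarrow{\ \pi_{n-k,V^*}\ } U',
\]
checking that openness is preserved at each arrow and that non-emptiness survives. Throughout I assume (as the statement tacitly requires) that $P$ is non-empty, since otherwise the conclusion would be false.

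First I would note that $P \cap U$ is open in $U$: the space $U$ carries the subspace topology from $V \otimes \F^k$, so intersecting the open set $P$ with $U$ yields an open subset of $U$, in both the Euclidean and the Zariski topology. Next, since $\pi_{k,V}$ is an \emph{open} map by Lemma~\ref{lem.quot.map} (in either topology), the image $\pi_{k,V}(P \cap U)$ is open in $\widehat\Gr(k,V)\setminus\{0\}$. The $\SL(V)$-equivariant isomorphism $\bigwedge^k V \cong \bigwedge^{n-k}(V^*)$ used for the middle bijection in~\eqref{eq:bijections} is linear, so it restricts to an isomorphism of affine varieties $\widehat\Gr(k,V)\setminus\{0\} \cong \widehat\Gr(n-k,V^*)\setminus\{0\}$, hence a homeomorphism in either topology; under this identification $\pi_{k,V}(P\cap U)$ is therefore open in $\widehat\Gr(n-k,V^*)\setminus\{0\}$. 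Finally, $\pi_{n-k,V^*}$ is a polynomial map and hence continuous in both topologies, so the preimage $(\pi_{n-k,V^*})^{-1}\pi_{k,V}(P\cap U)$ is open in $U'$. This settles openness.

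For non-emptiness I would use two facts. Identifying $V \otimes \F^k$ with $n \times k$ matrices, the complement of $U$ is the common zero locus of the $k\times k$ minors, a proper closed subvariety; hence $U$ is dense (in both topologies), and the non-empty open set $P$ must meet $U$, so $\pi_{k,V}(P\cap U)\neq\emptyset$. Since $\pi_{n-k,V^*}$ is surjective onto $\widehat\Gr(n-k,V^*)\setminus\{0\}$ — being a principal $\SL_{n-k}$-bundle over it by Lemma~\ref{lem.quot.map} — the preimage of any non-empty subset of the base is non-empty. Thus $(\pi_{n-k,V^*})^{-1}\pi_{k,V}(P\cap U)$ is non-empty.

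The only genuinely substantive ingredient is that $\pi_{k,V}$ is an open map, which is supplied by Lemma~\ref{lem.quot.map}; the remaining steps — continuity and surjectivity of $\pi_{n-k,V^*}$ and the homeomorphism between the two Grassmannian cones — are routine. The point needing mild care is the bookkeeping of the suppressed identification $\widehat\Gr(k,V)\setminus\{0\}\cong\widehat\Gr(n-k,V^*)\setminus\{0\}$ implicit in the notation $(\pi_{n-k,V^*})^{-1}\pi_{k,V}$, together with confirming that this identification, as well as each map in the chain, behaves well for \emph{both} topologies, so that a single argument handles the Euclidean and Zariski cases simultaneously.
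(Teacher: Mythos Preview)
Your proof is correct and follows the same route as the paper's own argument: establish that $P\cap U$ is open and non-empty (using that $U^c$ is a proper subvariety, hence has empty interior in either topology), then push forward via the open map $\pi_{k,V}$ from Lemma~\ref{lem.quot.map}, across the linear isomorphism of cones, and pull back along the continuous surjection $\pi_{n-k,V^*}$. The paper's version is terser---it stops after invoking openness of $\pi_{k,V}$ and leaves the homeomorphism of cones and the continuity/surjectivity of $\pi_{n-k,V^*}$ implicit---but you have simply spelled out what the paper takes for granted.
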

\begin{proof}
Let us first argue this for Euclidean topology.
Observe that $P \cap U$ is an open subset of $V \otimes \C^k$.
Further, since $U^c$ is a proper subvariety and hence has empty interior, we know that $P \cap U$ must be non-empty. Now, the statement follows since $\pi_{k,V}$ is an open map by Lemma~\ref{lem.quot.map}.
The argument for Zariski topology is analogous.
\end{proof}

An immediate corollary of the above lemma is the following:

\begin{corollary}\label{lem:castle compact}
Let $P = \{T \in V \otimes \F^k\ |\ \Stab_{G \times \SL_k}(T) \text{ is not compact} \}$.
Similarly, let $P' = \{S \in V^* \otimes \F^{n-k}\ |\ \Stab_{G \times \SL_{n-k}}(T) \text{ is not compact}\}$.
Then $P$ contains a Euclidean (resp.\ Zariski) open subset of $V \otimes \F^k$ if and only if $P'$ contains a Euclidean (resp.\ Zariski) open subset of $V^* \otimes \F^k$.
\end{corollary}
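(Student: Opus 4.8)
The plan is to reduce the statement to the clean set-theoretic identity
\[
(\pi_{n-k,V^*})^{-1}\,\pi_{k,V}(P \cap U) = P' \cap U',
\]
and then feed an open subset of $P$ through Lemma~\ref{lem:open castling}. The conceptual content is that \emph{non-compactness of the stabilizer is an invariant of the castling correspondence of orbits}; once this is established, everything else is bookkeeping, together with the observation that $U$ and $U'$ are open (both Euclidean and Zariski) in their ambient spaces, being complements of the proper subvarieties cut out by the vanishing of the relevant maximal minors.

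First I would record two facts about the subset $P \cap U$. (i) For $T \in U$ and $A \in \SL_k$ we have $\Stab_{G \times \SL_k}((1,A)\cdot T) = (1,A)\,\Stab_{G \times \SL_k}(T)\,(1,A)^{-1}$, so non-compactness of the stabilizer is constant along the fibers of $\pi_{k,V}$ (which are exactly the $\SL_k$-orbits in $U$); hence $P \cap U = \pi_{k,V}^{-1}(\pi_{k,V}(P \cap U))$ is $\pi_{k,V}$-saturated, and likewise $P' \cap U'$ is $\pi_{n-k,V^*}$-saturated. (ii) By Lemma~\ref{stab.equal} and the $G$-equivariant bijections~\eqref{eq:bijections}, if $T \in U$ and $S \in U'$ lie over the same point of $\widehat\Gr(k,V)\setminus\{0\} \cong \widehat\Gr(n-k,V^*)\setminus\{0\}$, then $\Stab_{G \times \SL_k}(T)$ and $\Stab_{G \times \SL_{n-k}}(S)$ are both isomorphic, as algebraic groups over $\F$, to $\Stab_G$ of that common point (equivalently, one invokes Corollary~\ref{cor.cas.stab.pres} directly). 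Combining (i) and (ii) yields the displayed identity, once we know that non-compactness is transported correctly.

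The crux — and the step I expect to require the most care — is (ii)'s claim that the isomorphism of stabilizers \emph{preserves compactness}. The isomorphism of Lemma~\ref{stab.equal} is biregular, i.e., a morphism of algebraic groups over $\F$ with morphism inverse; restricting to $\F$-points it is therefore a homeomorphism in the Euclidean topology, so one stabilizer is compact exactly when the other is. With this in hand, $T \in P \cap U$ iff its image point has non-compact $\Stab_G$ iff the corresponding $S \in U'$ satisfies $S \in P' \cap U'$, which is precisely the identity above.

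Finally I would conclude as follows. Suppose $P$ contains a non-empty Euclidean (resp.\ Zariski) open set $O$ of $V \otimes \F^k$. Applying Lemma~\ref{lem:open castling} to $O$ shows that $(\pi_{n-k,V^*})^{-1}\pi_{k,V}(O \cap U)$ is a non-empty open subset of $U'$ in the corresponding topology. Since $O \subseteq P$ gives $O \cap U \subseteq P \cap U$, monotonicity of images and preimages together with the identity above yields
\[
(\pi_{n-k,V^*})^{-1}\pi_{k,V}(O \cap U) \subseteq (\pi_{n-k,V^*})^{-1}\pi_{k,V}(P \cap U) = P' \cap U' \subseteq P'.
\]
As $U'$ is open in $V^* \otimes \F^{n-k}$, this exhibits a non-empty open subset of the ambient space contained in $P'$. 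The converse is identical after exchanging the two sides, using that castling is an involution, replacing $(V,k)$ by $(V^*,n-k)$ and back.
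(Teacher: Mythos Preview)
Your argument is correct and follows essentially the same route as the paper: take an open subset of $P$, push it through Lemma~\ref{lem:open castling}, and use Corollary~\ref{cor.cas.stab.pres} to land inside $P'$. Your version is slightly more explicit in formulating the saturated identity $(\pi_{n-k,V^*})^{-1}\pi_{k,V}(P\cap U)=P'\cap U'$ and in justifying directly that the biregular isomorphism of stabilizers transports compactness; the paper instead records the containment only and relegates the compactness point (inherent vs.\ subspace topology on the stabilizer) to a clarifying remark after the proof, resolving it by noting that stabilizers are closed, hence embedded, Lie subgroups.
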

\begin{proof}
It suffices to prove one direction.
Suppose $P$ contains a non-empty Euclidean (resp. Zariski) open subset $\widetilde{P}$.
Then, by Lemma~\ref{lem:open castling}, $(\pi_{n-k,V^*})^{-1} \pi_{k,V} (\widetilde{P} \cap U)$ is a Euclidean (resp. Zariski) open subset of $V^* \otimes \F^{n-k}$, and it is contained in $P'$ by Corollary~\ref{cor.cas.stab.pres}.
\end{proof}

We need to give a technical clarification in the above corollary with respect to notion of compactness.
There are two natural topologies one can give a Lie subgroup $H$ of a Lie group $G$.
The first is the inherent topology on $H$ by virtue of being a Lie group in itself, and the second is the subspace topology by virtue of being a subspace of $G$.
In the proof above, we are really using the inherent topology because the isomorphism of stabilizers furnished by Corollary~\ref{cor.cas.stab.pres} is an abstract isomorphism.
However, we will later need to use the lemma in the context of Corollary~\ref{cor:uniq->compact}, which refers to the subspace topology.
While for immersed Lie subgroups the inherent topology can differ from the subspace topology, the two topologies coincide for embedded Lie subgroups.
Since stabilizer subgroups are closed, they are embedded Lie subgroups and there is no ambiguity.

\subsection{Castling transforms for tensor actions}
We now discuss explicitly the relevance of castling transforms to tensor actions and hence to tensor normal models.
Here we are interested in the action of $\prod_{i=1}^k \SL_{d_i}$ on $\F^{d_1,\dots,d_k;m}$, which we succinctly denote by $\rho_{d_1,\dots,d_k;m}$. 
The ground field~$\F$ is assumed to be either $\R$ or $\C$.
If we need to specify it, we will add a subscript.

Let $G = \prod_{i=1}^{k-1} \SL_{d_i}$ and consider its natural action on $V = \F^{d_1,\dots,d_{k-1};m}$, which in our notation is $\rho_{d_1,\dots,d_{k-1};m}$.
Then, the action of $G \times \SL_{d_k}$ on $V \otimes \F^{d_k}$ is simply $\rho_{d_1,\dots,d_k;m}$.
It is well known that $V$ and $V^*$ are related by an automorphism on the group~$G$, which does not affect any of the notions of stability.%
\footnote{If we compose a representation~$\rho$ of~$\SL(d)$ with the automorphism $g \mapsto g^{-T}$, the result is isomorphic to the dual representation of $\rho$, and similarly for the product group $G$.}
Hence, we call $\rho_{d_1,\dots,N-d_k;m}$ the \emph{castling transform} of $\rho_{d_1,\dots,d_k;m}$, where~$N = \dim V = md_1\cdots d_{k-1}$ and we assume that $N > d_k$.
Thus Corollary~\ref{cor:castling} implies the following important result:

\begin{corollary}\label{cor.ten.cas.pres}
Let $d_1,\dots,d_k,m \in \Z_{>0}$ and suppose that $N = m \prod_{i=1}^{k-1} d_i > d_k$.
Then, $\rho_{d_1,\dots,d_k;m}$ is generically semistable (polystable, stable) if and only if $\rho_{d_1,\dots,N-d_k;m}$ is generically semistable (polystable, stable).
\end{corollary}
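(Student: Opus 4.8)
The plan is to recognize $\rho_{d_1,\dots,d_k;m}$ as exactly a ``tensor with $\F^{d_k}$'' action and then feed it into Corollary~\ref{cor:castling}, with the only real work being to match the castled side back to a tensor representation. First I would set $G = \prod_{i=1}^{k-1}\SL_{d_i}$ and let $\rho\colon G \to \GL(V)$ be its natural action on $V = \F^{d_1,\dots,d_{k-1};m}$, so that $\dim V = N = m\prod_{i=1}^{k-1}d_i$ and the action of $G \times \SL_{d_k}$ on $V \otimes \F^{d_k}$ is precisely $\rho_{d_1,\dots,d_k;m}$. Before invoking the castling machinery I must verify the standing hypothesis $\rho(G) \subseteq \SL(V)$: for $g_i \in \SL_{d_i}$, the determinant of its action on the factor $\F^{d_1}\otimes\cdots\otimes\F^{d_{k-1}}$ equals $(\det g_i)^{\prod_{j\neq i}d_j}=1$, and passing to $m$ copies and taking the product over $i$ keeps this determinant equal to one. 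The condition $N > d_k$ of the statement guarantees $0 < d_k < N = \dim V$, so the castling transform is defined.

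With these hypotheses checked, I would apply Corollary~\ref{cor:castling} with its parameter ``$k$'' taken to be $d_k$ and its ``$n$'' taken to be $N$. This immediately yields that $V \otimes \F^{d_k}$ is generically $G \times \SL_{d_k}$-semistable (polystable, stable) if and only if $V^* \otimes \F^{N-d_k}$ is generically $G \times \SL_{N-d_k}$-semistable (polystable, stable). At this point the left-hand representation is literally $\rho_{d_1,\dots,d_k;m}$, so the whole task reduces to identifying the right-hand representation $V^* \otimes \F^{N-d_k}$ with the tensor representation $\rho_{d_1,\dots,d_{k-1},N-d_k;m}$.

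For that last identification I would use that the tensor representation is self-dual up to an automorphism of the acting group. Concretely, pulling back $\rho$ along the automorphism $\sigma$ of $G$ that sends each $\SL_{d_i}$-component $g \mapsto g^{-T}$ produces a representation isomorphic to $V^*$, as recorded in the footnote above; equivalently, pulling $V^*$ back along $\sigma$ (which is an involution) recovers $V$. Extending by the identity on the $\SL_{N-d_k}$-factor gives an automorphism of $G \times \SL_{N-d_k}$ carrying $V^* \otimes \F^{N-d_k}$ to $V \otimes \F^{N-d_k} = \rho_{d_1,\dots,d_{k-1},N-d_k;m}$. Since an automorphism of the group merely permutes orbits and preserves stabilizers up to isomorphism, it leaves semistability, polystability, and stability (and their generic versions) untouched. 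Chaining this equivalence with the castling equivalence of the previous paragraph gives exactly the asserted statement, $\rho_{d_1,\dots,d_k;m}$ generically semistable (polystable, stable) $\iff$ $\rho_{d_1,\dots,N-d_k;m}$ generically semistable (polystable, stable).

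The genuinely delicate step is the final identification of $V^* \otimes \F^{N-d_k}$ with the tensor representation on the ``castled'' dimensions: one must be careful that replacing $V$ by $V^*$ really reproduces the same local dimensions $d_1,\dots,d_{k-1}$ with only $d_k$ changed to $N-d_k$, and that the group automorphism invoked does not perturb any of the three stability notions. Everything else is bookkeeping, since Corollary~\ref{cor:castling} already encapsulates the hard invariant-theoretic content, namely the $G$-equivariant orbit bijection, the preservation of stabilizers, and the kernel-dimension matching that underlies the stability assertion.
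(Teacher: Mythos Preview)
Your proposal is correct and follows essentially the same approach as the paper: set $G=\prod_{i=1}^{k-1}\SL_{d_i}$ acting on $V=\F^{d_1,\dots,d_{k-1};m}$, apply Corollary~\ref{cor:castling}, and then identify $V^*\otimes\F^{N-d_k}$ with $\rho_{d_1,\dots,d_{k-1},N-d_k;m}$ via the automorphism $g\mapsto g^{-T}$. If anything, you are slightly more careful than the paper, since you explicitly verify the standing hypothesis $\rho(G)\subseteq\SL(V)$, which the paper leaves implicit.
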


Given this result, we will make some definitions for later use.
For positive integers $d_1,\dots,d_k$ and~$m$, we call $(d_1,\dots,d_k;m)$ a \emph{datum} and $\rho_{d_1,\dots,d_k;m}$ the corresponding representation.
Observe that permuting the $d_i$ leaves the group and representation unchanged up to isomorphism, hence does not change the generic stability properties of the representation.

\begin{definition}
We say two data $(d_1,\dots,d_k;m)$ and $(d_1',\dots,d_k';m)$ are \emph{castling-equivalent} if $\rho_{d_1,\dots,d_k;m}$ and $\rho_{d'_1,\dots,d'_k;m}$ are related by a sequence of castling transforms (of the form described above) and permutations of the dimensions.
We say the datum $(d_1,\dots,d_k;m)$ is \emph{minimal} in its castling equivalence class if it minimizes $\prod_{i=1}^k d_i$.
\end{definition}


\begin{lemma}\label{lem:min-castle}
Consider the datum $(d_1,\dots,d_k;m)$.
Without loss of generality, we assume that $d_1 \leq d_2 \leq \dots \leq d_k$.
Let $N = m \cdot \smash{\prod_{i=1}^{k-1}} d_i$.
Then, if $\smash{\frac N2} < d_k < N$, the datum is not minimal in its castling equivalence class.
\end{lemma}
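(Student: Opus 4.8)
The plan is to show that a single castling transform applied to the largest factor produces a castling-equivalent datum with a strictly smaller product of dimensions. Since a datum is minimal precisely when it minimizes $\prod_{i=1}^k d_i$ over its castling equivalence class, exhibiting one castling-equivalent datum with a strictly smaller product immediately rules out minimality of $(d_1,\dots,d_k;m)$.

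Concretely, I would set $G = \prod_{i=1}^{k-1}\SL_{d_i}$ acting on $V = \F^{d_1,\dots,d_{k-1};m}$, so that $\rho_{d_1,\dots,d_k;m}$ is exactly the action of $G \times \SL_{d_k}$ on $V \otimes \F^{d_k}$, with $\dim V = N = m\prod_{i=1}^{k-1}d_i$. The hypothesis $d_k < N$ guarantees $0 < d_k < \dim V$, so the castling transform defined in the subsection on tensor actions is applicable and equals $\rho_{d_1,\dots,d_{k-1},N-d_k;m}$. Hence $(d_1,\dots,d_{k-1},N-d_k;m)$ lies in the castling equivalence class of $(d_1,\dots,d_k;m)$.

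The key step is the elementary observation that the replacement dimension $N - d_k$ is a positive integer strictly smaller than $d_k$: indeed $d_k < N$ forces $N - d_k \geq 1$ (as $N$ and $d_k$ are integers), while $\frac{N}{2} < d_k$ forces $N - d_k < \frac{N}{2} < d_k$. Consequently
\[
\prod_{i=1}^{k-1} d_i \cdot (N - d_k) \;<\; \prod_{i=1}^{k-1} d_i \cdot d_k \;=\; \prod_{i=1}^{k} d_i,
\]
where the strict inequality uses $\prod_{i=1}^{k-1} d_i \geq 1$. Thus the castling-equivalent datum $(d_1,\dots,d_{k-1},N-d_k;m)$ has strictly smaller product, so $(d_1,\dots,d_k;m)$ does not minimize the product over its class and is therefore not minimal.

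There is no substantive obstacle here: the argument is immediate once the castling transform is known to be well-defined. The only points requiring mild care are verifying applicability of the transform, namely $0 < d_k < N$, and confirming that the new dimension $N-d_k$ remains a positive integer — both of which are exactly what the two-sided hypothesis $\frac{N}{2} < d_k < N$ delivers, with the left inequality supplying the strict decrease of the product and the right inequality supplying well-definedness and positivity.
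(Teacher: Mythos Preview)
Your proposal is correct and follows essentially the same approach as the paper: apply the castling transform to the last factor to obtain $(d_1,\dots,d_{k-1},N-d_k;m)$, and observe that $N-d_k<d_k$ forces a strictly smaller product. The paper's proof is the same one-line argument, just stated more tersely.
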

\begin{proof}
We only need to show that if $\frac N2 < d_k < N$, then the datum is not minimal.
To see this, observe that we have a castling transform that takes $(d_1,\dots,d_k;m)$ to $(d_1,\dots,d_{k-1}, N-d_k;m)$ and the latter is smaller since $N - d_k < d_k$.
\end{proof}

\begin{remark}\label{rmk:d1 redundant}
If $d_1 = 1$, then $\rho_{d_1,d_2,\dots,d_k;m}$ and $\rho_{d_2,\dots,d_k;m}$ are equal up to isomorphism of the group and representation, so we can often assume without loss of generality that $d_i \geq 2$.
\end{remark}

Even though it will not be relevant to us, we observe that each castling equivalence class contains a unique minimal datum (up to permutation).
This follows from the fact that if any two data are related by (minimal) sequence of castling transforms, then the sequence of dimensions of representations produced by these transforms is monotonous, the proof of which is exactly the same as the proof of \cite[Proposition~29]{Manivel}.

\section{Stability for tensor actions}\label{sec:stability}
In this section, we will prove Theorem~\ref{thm:recursive}, which gives a recursive characterization of the generic stability properties for the tensor actions $\rho_{d_1,\cdots,d_k;m}$.
Without loss of generality, we may assume that $d_1 \leq d_2 \leq \dots \leq d_k$.
By Corollary~\ref{cor.ten.cas.pres}, we know that the properties we are looking are invariant under the castling transform in part~(3) of the theorem, so the majority of our work will be spent on the terminal cases.
We now prove each part of the theorem separately.

For the first part, we need a simple lemma.
It follows from the first fundamental theorem of invariant theory for the special linear group, a result that dates back to Weyl~\cite{Weyl}, but also has an elementary proof (see also \cite[p.~7, Example]{KP}).

\begin{lemma}\label{lem:fft}
Consider the action of $G = \SL_d$ on $V = \Mat_{d,r}$ by left multiplication.
If $d > r$, then every point $v \in V$ is $G$-unstable.
In contrast, if $d \leq r$, then $V$ is generically $G$-stable.
\end{lemma}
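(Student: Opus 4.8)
The plan is to treat the two regimes separately, giving in each case an argument that works verbatim over both $\F=\R$ and $\F=\C$. For the regime $d>r$, I want to show that every $M \in \Mat_{d,r}$ lies in the nullcone, i.e.\ $0 \in \overline{O}_M$. The key observation is that the columns of $M$ span a subspace $W = \mathrm{col}(M) \subseteq \F^d$ of dimension $s \le r < d$. First I would pick a basis $w_1,\dots,w_s$ of $W$, extend it to a basis $w_1,\dots,w_d$ of $\F^d$ with change-of-basis matrix $P \in \GL_d$, and form the one-parameter subgroup $g(t) = P\,\diag(t^{d-s},\dots,t^{d-s},t^{-s},\dots,t^{-s})\,P^{-1}$ having $s$ diagonal entries $t^{d-s}$ and $d-s$ entries $t^{-s}$. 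The product of the diagonal entries is $(t^{d-s})^s(t^{-s})^{d-s}=1$, so $g(t)\in\SL_d$, and since every column of $M$ lies in $W$ it is scaled by $t^{d-s}$; hence $g(t)M = t^{d-s}M \to 0$ as $t\to 0$ (taking $t\to 0^+$ in the real case, which keeps $g(t)$ real as the exponents are integers). Because $d-s\ge 1$, this exhibits $0$ in the orbit closure, so $M$ is unstable. Over $\C$ one could instead invoke the first fundamental theorem: when $d>r$ there are no $d\times d$ minors, the invariant ring is trivial, and the nullcone is all of $V$.

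For the regime $d \le r$, I would restrict to the Zariski-open locus $U \subseteq \Mat_{d,r}$ of matrices of full rank $d$, which is non-empty since $d\le r$ guarantees at least one non-vanishing $d\times d$ minor, and show that every $M \in U$ is stable. The action is faithful, so the kernel $K$ is trivial; and if $gM=M$ with $M$ of full row rank, then $g$ fixes $\mathrm{col}(M)=\F^d$ pointwise, forcing $g=I$, so $\Stab_{\SL_d}(M) = \{I\}$ is finite. It then remains only to prove that $O_M$ is closed.

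This closedness is the heart of the matter. Consider the maximal minors $p_I(M) = \det(M_I)$ for $I \subseteq [r]$ with $|I|=d$; since $\det((gM)_I) = \det(g)\,p_I(M) = p_I(M)$ for $g\in\SL_d$, the Zariski-closed set $Z_M = \{N \in \Mat_{d,r} : p_I(N) = p_I(M)\ \forall I\}$ contains $O_M$. I claim $O_M = Z_M$, which gives closedness immediately. Indeed, any $N \in Z_M$ has the same (and nonzero) Plücker vector as $M$, hence the same $d$-dimensional row span, so $N = hM$ for some $h \in \GL_d$; comparing minors gives $\det(h)\,p_I(M) = p_I(M)$ for all $I$, and since some $p_I(M)\ne 0$ we conclude $\det(h)=1$, i.e.\ $N \in O_M$. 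Thus $O_M = Z_M$ is Zariski-closed, hence Euclidean-closed, and with $M \ne 0$ and $\Stab/K$ finite we obtain that $M$ is stable; as $U$ is Zariski-open, $V$ is generically stable.

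The main obstacle is precisely this closedness of the generic orbit in the second regime; the $d>r$ case and the stabilizer computation are routine. The clean way through is the identification $O_M = Z_M$ via the maximal minors, using that full row rank makes the Plücker vector nonzero so that equality of minors forces both the same row span and determinant one. This is exactly the first fundamental theorem for $\SL_d$ specialized to the full-rank locus, and it has the merit of applying uniformly over $\R$ and $\C$.
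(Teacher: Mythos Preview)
Your proposal is correct. For $d>r$, both you and the paper use essentially the same one-parameter subgroup argument (the paper conjugates to put the range of $v$ in the span of the first $r$ standard basis vectors and uses $\diag(t^{d-r},\dots,t^{d-r},t^{-r},\dots,t^{-r})$; you use the actual rank $s$, which is a harmless refinement).

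For $d\le r$ the routes genuinely diverge. The paper reduces to the square case $d=r$ via Lemma~\ref{lem.increase} (stated only for $\F=\C$), observes that the $\SL_d$-orbit of an invertible square matrix is the fiber $\det^{-1}(\det v)$, hence closed with trivial stabilizer, and then appeals to a real--complex transfer result from~\cite{DM-mle} to cover $\F=\R$. You instead work directly on the full-rank locus for arbitrary $d\le r$ and identify the orbit $O_M$ with the Zariski-closed set $Z_M=\{N:p_I(N)=p_I(M)\ \forall I\}$ cut out by the maximal minors, using injectivity of the Pl\"ucker embedding to show $Z_M\subseteq O_M$. Your argument is uniform in $\F$ and self-contained, avoiding both the reduction lemma and the transfer principle; the paper's route is shorter once those auxiliary results are in hand. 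Both are perfectly valid, and your version is closer in spirit to the ``elementary proof'' the paper alludes to in the paragraph preceding the lemma.
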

\begin{proof}
Suppose $d>r$.
Then, any $v \in \Mat_{d,r}$ has rank at most $r$, so we can find $g\in\SL_d$ such that the range of $g v$ is a subspace of the span of the first $r<d$ standard basis vectors.
Then, $\varphi(t) := g^{-1} \diag(t^{d-r},\dots,t^{d-r}, t^{-r}, \dots,t^{-r}) g \in \SL_d$ for all~$t\neq0$, and $\varphi(t) v \to 0$ as $t\to0$.

Now suppose that $d \leq r$.
By Lemma~\ref{lem.increase}, it suffices to prove the claim in the case that $d=r$.
Suppose $v \in \Mat_{d,d}$ is invertible (a Zariski-open set).
Then its $\SL_d$-orbit is equal to $\det^{-1}(\det v)$, hence closed.
Since moreover its stabilizer is trivial, we conclude that $V$ is generically stable.

Note that Lemma~\ref{lem.increase} was stated only for $\F =\C$.
There are many ways to adapt the argument for $\F = \R$, e.g., one can use \cite[Proposition~2.23]{DM-mle}.
\end{proof}

\begin{proof}[Proof of Theorem~\ref{thm:recursive}, part (1)]
As a representation of $\SL_{d_k}$, the tensor space $\F^{d_1,\dots,d_k;m}$ is isomorphic to $\Mat_{d_k,md_1d_2\cdots d_{k-1}}$ and hence every point is unstable by Lemma~\ref{lem:fft}, since $d_k > d_1 \cdots d_{k-1} m$.
Hence every point is also unstable for the action of the larger group $G = \smash{\prod_{i=1}^k \SL_{d_i}}$.
\end{proof}

For the second part, we will need the following result.

\begin{lemma} \label{lem:stab.case2}
Let $\pi\colon H \rightarrow \SL_d \subseteq \GL_d$ be a $d$-dimensional representation of an algebraic group~$H$.
Consider the action of $G = H \times \SL_d$ on $\Mat_{d,d}$ given by $(h,g) \cdot A = \pi(h) A g^{-1}$.
For any full-rank matrix $A \in \Mat_{d,d}$, the stabilizer is given by $G_A = \{(h,A^{-1}\pi(h)A) \ |\ h \in H\}$.
In particular, the stabilizer in general position is isomorphic to~$H$.
\end{lemma}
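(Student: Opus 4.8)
The plan is to compute the stabilizer $G_A$ by hand for a fixed full-rank (invertible) matrix $A$ and then observe that invertibility is a generic condition. First I would write out the stabilizer condition: a pair $(h,g) \in H \times \SL_d$ fixes $A$ precisely when $\pi(h) A g^{-1} = A$. Since $A$ is invertible, I can solve for $g$ by multiplying on the right by $g$ and then on the left by $A^{-1}$, which turns this into the equivalent condition $g = A^{-1} \pi(h) A$. This already shows that $g$ is uniquely determined by $h$, so the projection $p\colon G_A \to H$ onto the first factor is injective; and since every $h \in H$ produces a valid pair, $p$ is also surjective. Hence $G_A = \{(h, A^{-1}\pi(h)A) \mid h \in H\}$, exactly as claimed.

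Before concluding I need to check that the element $g = A^{-1}\pi(h)A$ genuinely lies in $\SL_d$, and this is the one place where the hypothesis $\pi(H) \subseteq \SL_d$ is used: indeed $\det(g) = \det(A^{-1}\pi(h)A) = \det(\pi(h)) = 1$. I would then upgrade this set-theoretic description to an isomorphism of algebraic groups. The map $H \to G_A$ given by $h \mapsto (h, A^{-1}\pi(h)A)$ is a group homomorphism (a one-line check using $\pi(h_1)\pi(h_2) = \pi(h_1 h_2)$), and it is a morphism of varieties because its coordinates are regular functions of $h$, obtained from $\pi$ by conjugation with the fixed matrix $A$. Its inverse is the restriction of the projection $p$, which is manifestly a morphism. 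Therefore $p|_{G_A}\colon G_A \to H$ is an isomorphism of algebraic groups.

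Finally, for the statement about the stabilizer in general position, I would note that the locus of full-rank matrices is a non-empty Zariski-open subset of $\Mat_{d,d}$, being the complement of the vanishing locus of $\det$, and the computation above shows that the stabilizer is isomorphic to $H$ at every point of this locus. Consequently the stabilizer in general position is isomorphic to $H$. I do not anticipate any serious obstacle: the argument is an elementary direct computation, and the only point requiring care is the verification that the projection yields an isomorphism of \emph{algebraic} groups rather than merely of abstract groups, which I would handle exactly as in the proof of Lemma~\ref{stab.equal}.
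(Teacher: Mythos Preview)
Your proposal is correct and is exactly the direct computation the paper has in mind; the paper's own proof consists of the single word ``Straightforward.'' Your write-up fills in precisely the details one would expect, including the check that $A^{-1}\pi(h)A\in\SL_d$ and the upgrade to an isomorphism of algebraic groups.
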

\begin{proof}
Straightforward.
\end{proof}

One point to note is that the kernel of the tensor action $\rho = \rho_{d_1,\dots,d_k;m}$ is finite.
So stability is equivalent to having a closed orbit and finite stabilizer.
In particular, for $\F = \C$, Corollary~\ref{cor:crit gen stab} shows that generic stability of $\rho$ is the same as the stabilizer in general position being finite.

\begin{proof}[Proof of Theorem~\ref{thm:recursive}, part (2), for $\F = \C$]
Let us define $H = \SL_{d_1} \times \SL_{d_2} \times \dots \times \SL_{d_{k-1}}$ and $W = \C^{d_1,\dots,d_{k-1};m}$.
Then we can view $G \cong H \times \SL_{d_k}$ and $\C^{d_1,\dots,d_k;m} \cong W \otimes \C^{d_k} \cong \Mat_{d_k,d_k}$, since $d_k = d_1 \cdots d_{k-1} m$.
So, the stabilizer in general position is $H$ by Lemma~\ref{lem:stab.case2}, which is reductive.
Hence, $\rho = \rho_{d_1,\dots,d_k;m}$ is generically polystable by Theorem~\ref{theo:Popov}.
As discussed above, the kernel of $\rho$ is a finite group, so $\rho$ is generically stable if and only if the stabilizer in general position~$H$ is finite.
This happens precisely when $d_1 = d_2 = \dots = d_{k-1} = 1$.
\end{proof}

\begin{proof}[Proof of Theorem~\ref{thm:recursive}, part (2), for $\F = \R$]
This follows from \cite[Proposition~2.23]{DM-mle}.
\end{proof}

We already proved the third part of the theorem when we discussed the castling transforms of tensor actions.

\begin{proof} [Proof of Theorem~\ref{thm:recursive}, part (3)]
This follows from Corollary~\ref{cor.ten.cas.pres}.
\end{proof}

We now prove the fourth and last part of the theorem, which is perhaps the most complicated.
Here we wish to apply Theorems~\ref{thm:ave} and~\ref{thm:elashvili}.
Recall from Section~\ref{subsec:index} that the simple normal subgroups of $G = \SL_{d_1} \times \SL_{d_2} \times \dots \times \SL_{d_k}$ are just $\SL_{d_1},\SL_{d_2},\dots,\SL_{d_k}$.
To compute the index of $V = \C^{d_1,\dots,d_k;m}$ with respect to some~$\SL_{d_i}$, note that $V \cong (\C^{d_i})^{\oplus M}$ as an $\SL_{d_i}$-representation, where $M = \frac{md_1\cdots d_k}{d_i}$.
Now, the index of $\C^{d_i}$ with respect to $\SL_{d_i}$ is $\frac{1}{2d_i}$ and is additive.
It follows that the index of $\C^{d_1,\dots,d_k;m}$ with respect to $\SL_{d_i}$ is given by $\smash{\frac{M}{2d_i} = \frac{md_1d_2\cdots d_k}{2d_i^2}}$.
Since by assumption $d_1 \leq d_2 \leq \dots \leq d_k$, the smallest of these indices is the one for $\SL_{d_k}$, given by $\smash{\frac{md_1d_2\cdots d_{k-1}}{2d_k}}$.
When~$d_k \leq \frac12 m d_1 d_2 \cdots d_{k-1}$, as we assume in part~(4) of the theorem, all indices therefore are at least one, so Theorems~\ref{thm:ave} and~\ref{thm:elashvili} are applicable.

When $m=1$, then the representation of~$G$ on~$V$ is irreducible.
Elashvili has classified all irreducible representations of semisimple groups which are generically polystable, but not generically stable.
From the classification one can extract the following, see \cite[Theorem 9]{Elashvili} and also~\cite[p.~9]{BRVR}.

\begin{theorem}[\cite{Elashvili}]\label{thm:elashvili classification}
Consider the irreducible representation $V = \C^{d_1,\dots,d_k;m}$ of $G=\SL_{d_1}(\C) \times \cdots \times \SL_{d_k}(\C)$.
Assume that $2 \leq d_1 \leq \cdots \leq d_k \leq \frac{d_1 \cdots d_{k-1}}2$.
Then, $V$ satisfies the hypotheses of Theorem~\ref{thm:elashvili}, hence is generically $G$-polystable.
Moreover, $V$ is not generically $G$-stable if and only if $k=3$ and $(d_1,d_2,d_3) = (2,d,d)$ for some $d\geq2$.
\end{theorem}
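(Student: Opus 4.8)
The plan is to treat the polystability claim and the stability characterization separately, reducing both to the index computation carried out just before the statement. First I would note that since $V$ is \emph{irreducible} as a $G$-representation we must have $m=1$: the space $\C^{d_1,\dots,d_k;m}$ is the $m$-fold sum of the irreducible tensor product $\C^{d_1}\otimes\cdots\otimes\C^{d_k}$, which is irreducible only when $m=1$. Consequently the index of $V$ with respect to $\SL_{d_i}$ is $\frac{d_1\cdots d_k}{2d_i^2}$, and since $d_1\leq\cdots\leq d_k$ the smallest of these indices is the one for $\SL_{d_k}$, namely $\frac{d_1\cdots d_{k-1}}{2d_k}$. The hypothesis $d_k\leq\frac{d_1\cdots d_{k-1}}2$ says exactly that this smallest index is at least $1$, so all indices are $\geq 1$ and Theorem~\ref{thm:elashvili} yields generic $G$-polystability. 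This settles the first assertion.

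For stability I would use Corollary~\ref{cor:crit gen stab}: as the kernel of the tensor action is finite, $V$ is generically stable if and only if the stabilizer in general position $H$ has $\dim H=0$. If the inequality is \emph{strict}, i.e.\ $d_k<\frac{d_1\cdots d_{k-1}}2$, then $\frac{d_1\cdots d_{k-1}}{2d_k}>1$ and hence every index exceeds $1$, so Theorem~\ref{thm:ave} gives generic stability directly. Thus stability can only possibly fail on the boundary $d_1\cdots d_{k-1}=2d_k$, where the index of $\SL_{d_k}$ equals exactly $1$ and the AVE criterion no longer applies. The problem is therefore reduced to deciding stability for these boundary data.

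The ``if'' direction---failure of stability for $(2,d,d)$---I would settle by a dimension count, which also isolates why this family is exceptional: for $(d_1,d_2,d_3)=(2,d,d)$ one has
\[
\dim_\C G=3+2(d^2-1)=2d^2+1>2d^2=\dim_\C V,
\]
so a generic orbit, being contained in $V$, has dimension at most $2d^2$ and the generic stabilizer has $\dim H\geq\dim G-\dim V=1$; by Corollary~\ref{cor:crit gen stab} the representation is not generically stable, although it is polystable by the first part. For the ``only if'' direction I would appeal to Elashvili's classification \cite[Theorem~9]{Elashvili} of all irreducible representations of connected semisimple groups satisfying the index-$\geq1$ hypothesis of Theorem~\ref{thm:elashvili} that nonetheless fail to be generically stable, and intersect that list with the tensor representations obeying $2\leq d_1\leq\cdots\leq d_k$ and $d_1\cdots d_{k-1}=2d_k$. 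I expect this final step to be the main obstacle: for boundary data such as $(3,4,6)$ or $(2,2,2,4)$ one has $\dim_\C G<\dim_\C V$, so the dimension count provides no obstruction, and ruling out stability failures there genuinely relies on the external classification; the remaining work is to verify that no boundary datum other than $(2,d,d)$ with $k=3$ occurs on Elashvili's list.
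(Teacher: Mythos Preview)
Your proposal is correct and matches the paper's treatment. The paper does not give an independent proof of this theorem either: it verifies the index hypothesis exactly as you do, and for the stability characterization it simply cites \cite[Theorem~9]{Elashvili} (and~\cite{BRVR}) for the classification, just as you do for the ``only if'' direction. The only minor difference is that for the ``if'' direction the paper exhibits explicit positive-dimensional tori in the stabilizer of a generic point (immediately below the theorem statement), whereas you use the cleaner dimension count $\dim G>\dim V$; both arguments are valid and yield the same conclusion.
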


Note that this result proves part~(4) of the theorem when $\F = \C$ and $m=1$.
To deal with the case that $m\geq2$, we will still use of this theorem, together with a knowledge of the s.g.p.'s.

For $(d_1,d_2,d_3)=(2,2,2)$, the stabilizer of $v=e_1^{\otimes 3}+e_2^{\otimes 3}$ is a s.g.p.
It includes and has the same Lie algebra as the two-dimensional torus
$\{ (s,t,u) \in G : s, t, u \text{ diagonal}, stu = 1 \}$.

For $(d_1,d_2,d_3)=(2,d,d)$, $d>2$, the stabilizer of $v = e_1 \otimes I + e_2 \otimes A$, where $I$ denotes the $d\times d$ identity matrix and $A$ is a generic $d\times d$ diagonal matrix, is a s.g.p.
It includes and has the same Lie algebra as the $(d-1)$-dimensional torus $\{ (1,t,t^{-1}) \in G : t \text{ diagonal} \}$.

\begin{proof} [Proof of Theorem~\ref{thm:recursive}, part (4) for $\F = \C$]
Since $d_k \leq \smash{\frac{d_1 \cdots d_{k-1} m}2}$, the index of~$V = \C^{d_1,\dots,d_k;m}$ with respect to any simple normal subgroup of~$G$ is greater than or equal to one (as discussed above).
When the inequality is strict, then $\rho$ is generically stable by Theorem~\ref{thm:ave}.
Now suppose that $d_k = \smash{\frac{d_1 \cdots d_{k-1} m}2}$.
Then $\rho$ is still generically polystable by Theorem~\ref{thm:elashvili}.
We now characterize when the representation is generically stable.
If $d_1 = \cdots = d_{k-1} = 1$ then $d_k \leq \frac m2 < m$, so $\rho$ is generically stable by Lemma~\ref{lem:fft}.
Now assume that $d_{k-1} \geq 2$.
Then, $d_k = \frac{m d_1 \cdots d_{k-1}}2 \geq m$.
This means that if we consider the action of the larger group $H = \SL_{d_1} \times \dots \times \SL_{d_k} \times \SL_m$ on $V = \C^{d_1} \otimes \dots \otimes \C^{d_k} \otimes \C^m$ then the dimension $d_k$ is still the largest among the dimensions $d_1,d_2,\dots,d_k,m$.
Accordingly, we can apply Theorem~\ref{thm:elashvili classification} to find that $V$ is generically $H$-stable (hence also generically $G$-stable\footnote{One way to see this is by using Corollary~\ref{cor:crit gen stab}.}), except if $(d_1,\dots,d_k;m)$ is one of the following cases:
\begin{enumerate}[label=(\alph*)]
  \item $(1,\dots,1,2,d,d;1)$ for some $d\geq2$,
  \item $(1,\dots,1,2,2;2)$,
  \item $(1,\dots,1,d,d;2)$ for some $d>2$,
  \item $(1,\dots,1,2,d;d)$ for some $d>2$.
\end{enumerate}
In case~(a), we have $m=1$ and hence $G\cong H$, so $V$ is not generically $G$-stable either.
To deal with the case that $m=2$, we observe that an s.g.p.\ for $G$ can be obtained by intersecting a generic $H$-conjugate of an s.g.p.\ for $H$ with the subgroup~$G$.
From the description of the s.g.p.'s above, we can observe the following.
In case~(b), the s.g.p.~for $G$ has dimension one (the dimension drops by one compared to $H$), while in case~(c) it has dimension $d-1$ (same as for the $H$-action).
Thus we see that $V$ is in either case not generically $G$-stable.
In contrast, in case~(d) the s.g.p.\ for $G$ is finite, so $V$ is generically $G$-stable.%
\footnote{Alternately, cases~(b), (c), and (d) follow from the results on matrix normal models in~\cite{DM-mle}.}
This concludes the proof.
\end{proof}

\begin{proof}[Proof of Theorem~\ref{thm:recursive}, part (4) for $\F = \R$]
This follows from \cite[Proposition~2.23]{DM-mle}.
\end{proof}

Finally, we need to prove that if $\rho$ is not generically semistable then it is unstable.
For~$\F=\C$, this statement is contained in Lemma~\ref{lem.loc.const}.
For $\F=\R$, it then follows from \cite[Corollary 2.22 and Proposition~2.23]{DM-mle}.
This concludes the proof of Theorem~\ref{thm:recursive}.

\section{A uniform characterization}\label{sec:uniform}
In this section we prove Theorem~\ref{thm:main-inv}, which gives a non-recursive characterization.
Following~\cite{BRVR}, we define the following quantities for positive integers $k$, $d_1,\dots,d_k$, and $m$:
\begin{align*}
  R(d_1,\dots,d_k;m) &\coloneqq m \prod_{i=1}^k d_i + \sum_{n=1}^k (-1)^n G_n(d_1,\dots,d_k),
\shortintertext{where}
  G_n(d_1,\dots,d_k) &\coloneqq \sum_{1\leq i_1 < \ldots < i_n \leq k} \gcd(d_{i_1},\dots,d_{i_n})^2,
\shortintertext{as well as}
  \gmax(d_1,\dots,d_k) &\coloneqq \max_{i<j} \gcd(d_i,d_j).
\shortintertext{and}
  \Delta(d_1,\dots,d_k; m) &\coloneqq m \prod_{i=1}^k d_i - 1 - \sum_{i=1}^k (d_i^2 - 1).
\end{align*}
By convention, we define $\gmax(d) = 1$ for any $d \in \Z_{>0}$, and we always assume that $k\geq1$.

We saw earlier that generic semistability (polystability, stability) for tensor actions is symmetric in the $d_i$'s, as well as invariant under the castling transform in part~(3) of Theorem~\ref{thm:recursive}.
It is also invariant under removing dimensions $d_i$ that are equal to one.

It is not hard to verify that the quantities $R(d_1,\dots,d_k;m)$, $\gmax(d_1,\dots,d_k)$, and $\Delta(d_1,\dots,d_k;m)$ have the same invariance properties.
Hence, to prove Theorem~\ref{thm:main-inv}, it suffices to consider the case when $(d_1,\dots,d_k;m)$ is a minimal datum, and we may also assume that the $d_i$ are sorted.
Our analysis follows the same lines as the proof of~\cite[Proposition 5.3]{BRVR}.

\begin{lemma}\label{lem:R minimal}
Suppose $(d_1,\dots,d_k;m)$ is a minimal datum, and $d_1 \leq d_2 \leq \dots \leq d_k$.
Then:
\begin{enumerate}
\item $R < 0$ if and only if $d_k > md_1d_2\cdots d_{k-1}$;
\item $R = 0$ if and only if $d_k = md_1d_2\cdots d_{k-1}$;
\item $R > 0$ if and only if $d_k \leq \frac{1}{2} md_1d_2 \cdots d_{k-1}$.
\end{enumerate}
\end{lemma}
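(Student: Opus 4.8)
The plan is to use minimality to collapse the statement into three one-directional implications, rewrite $R$ as a single divisor sum, and then treat the three regimes separately; essentially all the work lives in the regime $d_k\le N/2$, where I abbreviate $N:=m\,d_1\cdots d_{k-1}$.

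First I would record the reduction. By Lemma~\ref{lem:min-castle}, minimality forbids $\tfrac N2<d_k<N$, so the three hypotheses $d_k>N$, $d_k=N$, and $d_k\le\tfrac N2$ are mutually exclusive and exhaust all minimal sorted data; since the sign of $R$ also trichotomizes, it suffices to prove the forward implications $d_k>N\Rightarrow R<0$, $d_k=N\Rightarrow R=0$, and $d_k\le\tfrac N2\Rightarrow R>0$. Next I would put $R$ in usable form. Setting $\Phi:=\sum_{\emptyset\ne S\subseteq[k]}(-1)^{|S|+1}\gcd(d_i:i\in S)^2$, one has $R=m\,d_1\cdots d_k-\Phi=Nd_k-\Phi$. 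Applying the identity $n^2=\sum_{e\mid n}J_2(e)$ for Jordan's totient $J_2\ge0$ and then inclusion--exclusion collapses $\Phi$ to $\Phi=\sum_e J_2(e)$, the sum over all $e$ dividing at least one $d_i$. This makes two estimates transparent: $\Phi\ge\sum_{e\mid d_k}J_2(e)=d_k^2$, and $\Phi\le\sum_i d_i^2-(k-1)$ (each divisor set of $d_i$ contributes at most $d_i^2$, while $e=1$ is overcounted $k-1$ times).

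These two estimates dispatch the easy cases and set up the hard one. If $d_k>N$ then $R\le Nd_k-d_k^2<0$. If $d_k=N$ then every $d_i$ with $i<k$ divides $N=d_k$, so the index set is exactly the divisors of $d_k$ and $\Phi=d_k^2=Nd_k$, whence $R=0$. The upper bound gives $R\ge\Delta$, which drives the case $d_k\le\tfrac N2$. Here I first discard all $d_i=1$ (this changes none of $R,N,\Delta$), so I may assume $d_i\ge2$; small $k$ is immediate, and $k=2,m=1$ is vacuous. For $m\ge2$ I would prove $\sum_i d_i^2\le m\,d_1\cdots d_k$ outright: the hypothesis yields $d_k^2\le\tfrac12 m\,d_1\cdots d_k$, and $\sum_{i<k}d_i^2\le d_k\sum_{i<k}d_i\le d_k\prod_{i<k}d_i\le\tfrac12 m\,d_1\cdots d_k$, using the elementary fact $\sum_{i<k}d_i\le\prod_{i<k}d_i$ for integers $\ge2$. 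Thus $\Delta\ge k-1\ge1$ and $R\ge\Delta>0$.

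The subcase $m=1$ (forcing $k\ge3$) is the crux, because $\sum d_i^2\le\prod d_i$ may fail. Instead I would peel off $d_k$: with $N=\prod_{i<k}d_i$ one gets $R=d_k(N-d_k)-E$, where $E$ sums $J_2(e)$ over those $e$ dividing some $d_i$ ($i<k$) but not $d_k$, so $E\le\sum_{i<k}(d_i^2-1)$. Since $N-d_k\ge\tfrac12 N$, this gives $R\ge\tfrac12 Nd_k-\sum_{i<k}d_i^2+(k-1)$, and whenever $2\sum_{i<k}d_i\le\prod_{i<k}d_i$ the first two terms are nonnegative and $R\ge k-1>0$. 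The inequality $2\sum_{i<k}d_i\le\prod_{i<k}d_i$ fails only for a short explicit list of prefixes $(d_1,\dots,d_{k-1})$, all with $k\le4$; for each such prefix the constraint $d_k\le\tfrac N2$ together with $d_k\ge d_{k-1}$ confines $d_k$ to finitely many values, the sole infinite family being $(2,d,d;1)$, and each is verified directly from the totient formula (for instance $R=d^2$ or $R=d^2-3$ for $(2,d,d;1)$), always positive.

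The step I expect to be the main obstacle is precisely this $m=1$ endgame: the crude bound $\Phi\le\sum_i d_i^2$ is too lossy exactly when dimensions coincide or divide one another (there $R\ge\Delta$ alone does not suffice, since $\Delta$ can be negative), so one must genuinely exploit the union-of-divisors structure of $\Phi$ via the Jordan-totient formula and the sharper estimate $E\le\sum_{i<k}\bigl(d_i^2-\gcd(d_i,d_k)^2\bigr)$, and then clear the finitely many truly exceptional small data by hand.
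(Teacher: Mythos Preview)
Your argument is correct and takes a genuinely different route from the paper. The paper never passes through the Jordan--totient identity; instead it splits the alternating gcd sum into the terms $A_n$ (not involving $d_k$) and $B_{n+1}$ (involving $d_k$), uses $A_n\ge B_{n+1}$, and for Case~(3) completes the square in $d_k$ and bounds each gcd by $d_{k-1}$ to get $R>d_{k-1}^2\bigl(d_1\cdots d_{k-2}\,m-1-2^{k-2}\bigr)$, which is nonnegative for $m\ge2$. Crucially, the paper does \emph{not} handle $m=1$ at all: it cites \cite[Proposition~5.3]{BRVR} wholesale for that subcase, and it also defers the tail of Case~(1) to \cite[Eq.~(9)]{BRVR}. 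Your reformulation $\Phi=\sum_{e\in D}J_2(e)$ (with $D=\bigcup_i\operatorname{div}(d_i)$) is more structural: it gives the bounds $d_k^2\le\Phi\le\sum_i d_i^2-(k-1)$ for free, which dispatch Cases~(1) and~(2) in one line each and yield $R\ge\Delta$ directly for Case~(3). The price you pay is the short finite verification at the end of the $m=1$ subcase; the paper avoids that by outsourcing $m=1$ entirely. It is worth noting that the paper's Remark immediately following the proof records essentially your observation (writing $\Phi$ as the cardinality of a union in $\Q/\Z$) and uses it to give an alternative short proof of Cases~(1) and~(2), but does not push it through Case~(3) as you do.
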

\begin{proof}
According to Lemma~\ref{lem:min-castle}, any minimal datum satisfies either $d_k > md_1d_2\cdots d_{k-1}$, $d_k = md_1d_2\cdots d_{k-1}$, or $d_k \leq \frac{1}{2} md_1d_2 \cdots d_{k-1}$.
If $d_1 = \dots = d_k = 1$ then the lemma is immediate, since $R = m - 1$.
Otherwise, we may assume that $d_1\geq2$ by removing all dimensions equal to one.
We may also assume that $m\geq2$, since when $m=1$ the lemma is already proved in~\cite[Proposition 5.3]{BRVR}.
Finally, observe that if we prove the ``if'' directions for all three statements, then the ``only if'' directions are automatic.
Hence, we proceed to prove the ``if'' directions in all three cases under the assumptions that $d_1\geq2$ and $m\geq 2$.

Let us write $B_n$ for the terms in $G_n$ that involve~$d_k$, and $A_n = G_n(d_1,\dots,d_{k-1})$ for all other terms.
Note that $A_k=0$ and $B_1 = d_k^2$.
Thus:
\begin{align}\label{eq:convenient}
  R(d_1,\dots,d_k;m)
= m \prod_{i=1}^k d_i - d_k^2 + \sum_{n=1}^{k-1} (-1)^n (A_n - B_{n+1})
\end{align}

\emph{Case (1):} Suppose $d_k > d_1 \cdots d_{k-1} m$.
Then $d_k = d_1 \cdots d_{k-1} m + \alpha$ for some $\alpha\geq1$, and using~\eqref{eq:convenient},
\begin{align*}
  R
&= -\alpha d_k + \sum_{n=1}^{k-1} (-1)^n (A_n - B_{n+1}) \\
&= -\alpha^2 - \alpha d_1 \cdots d_{k-1} m + \sum_{n=1}^{k-1} (-1)^n (A_n - B_{n+1})
\end{align*}
Clearly, $A_n \geq B_{n+1}$ for all $n$, so we can leave out the terms for odd $n$ and obtain the bound
\begin{align*}
  R
&\leq -\alpha^2 - \alpha d_1 \cdots d_{k-1} m + \sum_{n\geq2 \text{ even}} (A_n - B_{n+1}) \\
&\leq -\alpha^2 - \alpha d_1 \cdots d_{k-1} m + \sum_{n\geq2 \text{ even}} A_n \\
&< - 2 d_1 \cdots d_{k-1} + \sum_{n\geq2 \text{ even}} A_n,
\end{align*}
using that $m\geq2$ and $\alpha\geq1$.
Now we are in the same situation as in \cite[Eq.~(9)]{BRVR} and find that~$R<0$.

\smallskip

\emph{Case (2):} Suppose $d_k = d_1 \cdots d_{k-1} m$.
Here we have $B_{n+1} = A_n$ for all $n$, so using \eqref{eq:convenient},
\begin{align*}
  R = m \prod_{i=1}^k d_i - d_k^2 = 0.
\end{align*}

\smallskip

\emph{Case (3):} Suppose $d_k \leq \frac12 d_1 \cdots d_{k-1} m$.
If $k=1$ then $d_1 \leq \frac m2$ and
\begin{align*}
  R = md_1 - d_1^2 = d_1 (m - d_1) \geq \frac{m d_1}2 > 0.
\end{align*}
We now discuss the case that $k\geq2$.
Here,
\begin{align*}
  R
&= m \prod_{i=1}^k d_i - d_k^2 + \sum_{n=1}^{k-1} (-1)^n (A_n - B_{n+1}) \\
&= \frac14 d_1^2 \cdots d_{k-1}^2 m^2 - (\frac12 d_1 \cdots d_{k-1} m - d_k)^2  + \sum_{n=1}^{k-1} (-1)^n (A_n - B_{n+1}) \\
&\geq \frac14 d_1^2 \cdots d_{k-1}^2 m^2 - (\frac12 d_1 \cdots d_{k-1} m - d_{k-1})^2  + \sum_{n=1}^{k-1} (-1)^n (A_n - B_{n+1}) \\
&= d_{k-1}^2 \left( d_1 \cdots d_{k-2} m - 1 \right) + \sum_{n=1}^{k-1} (-1)^n (A_n - B_{n+1}),
\end{align*}
where the inequality follows because $d_{k-1} \leq d_k \leq \frac{1}{2} d_1\cdots d_{k-1}m$.
Leaving out the even terms, which are non-negative since $A_n \geq B_{n+1}$, we obtain
\begin{align*}
R
&\geq d_{k-1}^2 \left( d_1 \cdots d_{k-2} m - 1 \right) + \sum_{n=1}^{k-1} (-1)^n (A_n - B_{n+1}) \\
&\geq d_{k-1}^2 \left( d_1 \cdots d_{k-2} m - 1 \right) - \sum_{n\geq1 \text{ odd}} (A_n - B_{n+1}) \\
&> d_{k-1}^2 \left( d_1 \cdots d_{k-2} m - 1 \right) - \sum_{n\geq1 \text{ odd}} A_n.
\end{align*}
Each of the $\binom{k-1}n$ GCDs contributing to $A_n$ are $\leq d_{k-1}$, so
\begin{align*}
  \sum_{n\geq1 \text{ odd}} A_n
\leq d_{k-1}^2 \sum_{n\geq1 \text{ odd}} \binom{k-1}n
= d_{k-1}^2 2^{k-2}
\end{align*}
and hence
\begin{align}\label{eq:R bound}
  R
> d_{k-1}^2 \left( d_1 \cdots d_{k-2} m - 1 - 2^{k-2} \right)
\geq d_{k-1}^2 \left( 2^{k-2} m - 1 - 2^{k-2} \right)
\end{align}
For $m\geq2$ and $k\geq2$, it holds that
\begin{align}\label{eq:inner}
  2^{k-2} m - 1 - 2^{k-2}
\geq 2^{k-2} - 1
\geq 0,
\end{align}
and hence we conclude that $R>0$.
\end{proof}

\begin{remark}
Write $\mathcal{Z}(d_1,\dots,d_k) := \sum_n (-1)^{n+1} \sum_{i_1 < i_2 < \dots < i_n} {\rm gcd}(d_{i_1}, d_{i_2},\dots,d_{i_n})$.
Then, for $d_1,\dots,d_k \in \Z_{\geq 1}$, one can interpret $\mathcal{Z}(d_1,\dots,d_k)$ as the cardinality of $\bigcup_{i=1}^k \left(\Z[\frac{1}{d_i}]/\Z\right)$ in $\Q/\Z$.
In particular, $\mathcal{Z}(d_1,\dots,d_k) \geq 0$.
Further, observe that $R(d_1,\dots,d_k;m) = m \prod_{i=1}^k d_i - \mathcal{Z}(d_1^2,\dots,d_k^2)$.
\end{remark}

An alternate and short proof of the ``if" statements in cases $(1)$ and $(2)$ in the above theorem is as follows.
Observe that the quantity $R$ is invariant under the transformation $(d_1,\dots,d_k;m) \rightarrow (d_1,\dots,d_{k-1}, d_k^*;m)$ where $d_k^* = m\prod_{i=1}^{k-1}d_i - d_k$ even in the case when some of the entries are negative or zero.
Thus in case $(1)$ we get $R(d_1,\dots,d_k;m) = R(d_1,\dots,d_{k-1},d^*_k;m) < 0$ since $md_1\dots d_{k-1} d^*_k < 0$ and $\mathcal{Z}(d_1^2,\dots,d_{k-1}^2, (d^*_k)^2) \geq 0$.
In case $(2)$, using that $\mathcal{Z}(d_1^2,\dots,d_{k-1}^2, 0) = 0$, one can deduce $R(d_1,\dots,d_k;m) = R(d_1,\dots,d_{k-1},0;m) = 0$.

Now, we can prove Theorem~\ref{thm:main-inv}.

\begin{proof}[Proof of Theorem~\ref{thm:main-inv}]
Generic semistability (polystability, stability) for tensor actions is invariant under the castling transform in part~(3) of Theorem~\ref{thm:recursive} and under permuting the dimensions~$d_i$.
The same is true for the quantities $R$, $\Delta$, and $\gmax$.
So, we can assume that~$d_1 \leq d_2 \leq \dots \leq d_k$ and that $(d_1,\dots,d_k;m)$ is a minimal datum.

\smallskip

\emph{Case (1):}
Suppose $R > 0$.
Then we know from Lemma~\ref{lem:R minimal} that $d_k \leq \frac12 md_1 d_2 \cdots d_{k-1}$.
If $k = 1$, then $d_1 \leq \frac m 2$, so we must have $m\geq 2$.
Further, $\gmax = 1$, so $R > 0$ implies that $R \geq \gmax^2$.
Finally, $\rho$ is always generically stable because the action of $\SL_{d_1}$ on $(\C^{d_1})^{\oplus m}$ is generically stable as long as $m \geq d_1$ (we have $m \geq 2 d_1$).
This concludes the proof in case that $k=1$.

Now, we deal with $k\geq2$.
We may assume that $d_1 \geq 2$ by removing all dimensions equal to one (if all $d_i = 1$ then we can reduce to the case $k=1$ discussed above).
We now distinguish two cases:
\begin{itemize}
\item $m\geq 2$:
In this case we show that $R \geq \gmax^2$ and characterize equality.
If $k>2$ then \eqref{eq:inner} is not tight, and we see from \eqref{eq:R bound} that
\begin{align*}
  R > d_{k-1}^2 \geq \gmax^2.
\end{align*}
For $k=2$, we are in the matrix case.
Since $2 \leq d_1 \leq d_2 \leq \frac12 m d_1$, we find that
\begin{align*}
  R(d_1,d_2;m)
&= md_1d_2 - d_1^2 - d_2^2 + \gcd(d_1,d_2)^2
= (md_1 - d_2) d_2 - d_1^2 + \gmax^2 \\
&\geq \frac12 md_1^2 - d_1^2 + \gmax^2
= \left( \frac m2 - 1 \right) d_1^2 + \gmax^2
\geq \gmax^2,
\end{align*}
with equality if and only if $d_1 = d_2$ and $m=2$, in which case also $\gmax = d_1 = d_2 \geq 2$.

Thus we have proved that $R \geq \gmax^2$, with equality if and only if $k=2$ and $(d_1,d_2) = (d,d)$ for some $d\geq2$.
By part~(4) of Theorem~\ref{thm:recursive}, this is precisely the case where $\rho$ is generically polystable but not generically stable (when $d_k \leq \frac12m d_1 d_2 \cdots d_{k-1}$ and $m\geq2$).

\item $m=1$:
\cite[Proposition 6.1]{BRVR} shows that in this case $\Delta \geq -2$, with equality precisely in the case that $k=3$ and $(d_1,d_2,d_3)=(2,d,d)$ for some $d\geq 2$.
(If $\Delta > -2$ then in fact $\Delta \geq 2$, but we do not need this.)
By part~(4) of Theorem~\ref{thm:recursive}, this is precisely the case where $\rho$ is generically polystable but not generically stable (when $d_k \leq \frac12m d_1 d_2 \cdots d_{k-1}$ and $m=1$).
\end{itemize}

\smallskip

\emph{Case (2):}
Suppose $R = 0$.
Then we know from Lemma~\ref{lem:R minimal} that $d_k = md_1 d_2 \cdots d_{k-1}$.
By part~(2) of Theorem~\ref{thm:recursive}, $\rho$ is generically polystable, and generically stable if and only if $d_1 = \cdots = d_{k-1} = 1$.
When $k=1$, we have $\gmax=1$ (by definition) and this condition is always satisfied.
Otherwise, $d_k = md_1d_2 \cdots d_{k-1}$ means that $\gmax = \max_{i<j} \gcd(d_i,d_j) = \max_{i<k} d_i$.
Thus, we find that in either case, $\gmax=1$ if and only if $\rho$ is generically stable.

\smallskip

\emph{Case (3):}
Suppose $R < 0$.
By Lemma~\ref{lem:R minimal}, we know that $d_k > md_1 d_2 \cdots d_{k-1}$.
Hence $\rho$ is unstable by part~(1) of Theorem~\ref{thm:recursive}.
\end{proof}

\section{Maximum likelihood estimation for tensor normal models}\label{sec:mle}
In this section, we will prove Theorem~\ref{thm:main} which characterizes the boundedness of the likelihood function and the existence and uniqueness of MLEs for the tensor normal models.

The tensor normal models are the Gaussian group models corresponding to the tensor action.
Thus the results on generic stability for tensor actions translate directly to results on maximum likelihood estimation for tensor normal models via Theorem~\ref{theo:AKRS}.
This connection is perfect for $\F = \C$, whereas some more effort is required for $\F = \R$.

A technical point to note is that $G = \smash{\prod_{i=1}^k \SL_{d_i}}$ is not a subset of $\GL(V)$, $V = \F^{d_1,\dots,d_k;m}$, which is needed to apply Theorem~\ref{theo:AKRS} verbatim.
However, this is a small issue, as we may simply replace $G$ by its homomorphic image $\rho_{d_1,\dots,d_k;m}(G)$, and note that notions of semistability, polystability, and stability are the same for both groups.

\begin{proof} [Proof of Theorem~\ref{thm:main}]
We first consider the case of $\F = \C$.
Consider the action of $G = \prod_{i=1}^k \SL_{d_i}(\C)$ on $\C^{d_1,\dots,d_k}$.
The associated Gaussian group model is $\mathcal{M}_\C(d_1,\dots,d_k)$.
Thus, Corollary~\ref{cor:mle-gen.stable} implies that Theorem~\ref{thm:main-inv} translates precisely to Theorem~\ref{thm:main}.

We now discuss the relation between the real and the complex case.
For both $\F = \R$ and $\C$, Theorem~\ref{thm:main-inv} shows that generic semistability is equivalent to generic polystability.
Further, generic semistability (resp.\ polystability) over $\C$ is equivalent to generic semistability (resp.\ polystability) over $\R$, see \cite[Proposition~2.23]{DM-mle}.
Finally, for both $\F = \R$ and $\C$, generic semistability is equivalent to almost sure boundedness of log-likelihood function because the semistable locus (over $\F$) is either empty or a (non-empty) Zariski-open subset (in particular the complement of a measure zero subset), see \cite[Corollary~2.15, Proposition~2.21, Corollary~2.22]{DM-mle}.
In fact, we claim that the following are equivalent:
\begin{enumerate}
\item $\rho_{d_1,\dots,d_k;m}$ is generically semistable for $\F = \C$.
\item $\rho_{d_1,\dots,d_k;m}$ is generically semistable for $\F = \R$.
\item $\rho_{d_1,\dots,d_k;m}$ is generically polystable for $\F = \C$.
\item $\rho_{d_1,\dots,d_k;m}$ is generically polystable for $\F = \R$.

\item For the tensor normal model $\mathcal{M}_\C(d_1,\dots,d_k)$, we have almost sure boundedness of log-likelihood function for $m$ samples.
\item For the tensor normal model $\mathcal{M}_\R(d_1,\dots,d_k)$, we have almost sure boundedness of log-likelihood function for $m$ samples.
\item For the tensor normal model $\mathcal{M}_\C(d_1,\dots,d_k)$, an MLE exists almost surely for $m$ samples.
\item For the tensor normal model $\mathcal{M}_\R(d_1,\dots,d_k)$, an MLE exists almost surely for $m$ samples.
\end{enumerate}
The equivalence of (1)---(6) was discussed above. The implications $(3) \implies (7)$ and $(4) \implies (8)$ follow from Theorem~\ref{theo:AKRS} since the complement of a Zariski-open subset has Lebesgue measure zero. Further, it is also immediate that $(7) \implies (5)$ and $(8) \implies (6)$. This shows the equivalence of all eight statements.

Moreover, $\rho_{d_1,\dots,d_k;m}$ is generically stable over $\F = \C$ if and only if the same holds for $\F = \R$, see again~\cite[Proposition~2.23]{DM-mle}.
In either case, generic stability implies the almost sure existence of a unique MLE by Theorem~\ref{theo:AKRS}.
However, the converse is not necessarily true when $\F = \R$, and this is what needs to be investigated.

To summarize, the only cases we need to further study are the cases in which $\rho_{d_1,\dots,d_k;m}$ is generically polystable but not generically stable.
According to Theorem~\ref{thm:recursive}, these are the castling equivalence classes of the minimal data below:
\begin{enumerate}
\item $d_k = md_1d_2\cdots d_{k-1}$ and $d_1 \cdots d_{k-1} > 1$.
\item $(d_1,\dots,d_k,m) = (1,1,\dots,1,d,d;2)$ with $d \geq 2$.
\item $(d_1,\dots,d_k,m) = (1,1,\dots,1,2,d,d;1)$ with $d \geq 2$.
\end{enumerate}
To conclude the proof of Theorem~\ref{thm:main}, we need to show for these we do not have the almost sure existence of a unique MLE also over $\F=\R$.
By Corollary~\ref{cor:uniq->compact} and Corollary~\ref{lem:castle compact}, it suffices to prove that in any of these three minimal cases there is a Euclidean open subset consisting of points with non-compact stabilizers for each of the above minimal data.
Note that Euclidean open subsets have positive Lebesgue measure.

For case (1), observe that the proof of Lemma~\ref{lem:stab.case2} works even when the underlying field is $\R$.
So, in fact, there is a non-empty Zariski-open subset of $V$ (in particular, a set of positive measure) where the stabilizer is isomorphic to $\prod_{i=1}^{k-1} \SL_{d_i}(\R)$, which is non-compact unless $d_1 = \cdots = d_{k-1} = 1$.

We now address case (2) and distinguish two cases:
\begin{itemize}
\item $d\geq3$:
For generic $v \in \smash{\Mat_{d,d}^2 = (\R^d \otimes \R^d)^{\oplus 2}}$, we give a sequence of elements in the stabilizer with no convergent subsequence (hence proving that the stabilizer is not compact).
It was proved in \cite[Lemma~6.2]{DM-mle} that for generic $v \in \smash{\Mat_{d,d}^2}$, there exists $(g,h) \in G_v$ such that $g$ and $h$ has eigenvalues with absolute value not equal to $1$.
Since $G_v \subseteq \SL_d \times \SL_d$, this means that $\{(g^n,h^n)\}_{n \in \Z_{>0}}$ is a sequence of elements in $G_v$ with no convergent subsequence.
Hence $G_v$ is not compact. This gives in fact a Zariski open subset consisting of points with non-compact stabilizer.

\item $d=2$:
It is easy to see that the stabilizer of $v_{a,b} = \smash{\left(\begin{psmallmatrix} 1 &0 \\ 0 & 1 \end{psmallmatrix},\begin{psmallmatrix} a & 0 \\ 0 & b \end{psmallmatrix} \right) \in \Mat_{2,2}^2}$ is not compact for any $a,b\in\R$ (cf.~the discussion below Theorem~\ref{thm:elashvili classification}).
Now, let us consider $W = \{(A,B) \in \Mat_{2,2}^2 \ | \ \det(A) \neq 0, \det(tI -A^{-1}B) \text{ has distinct real roots}\}$.
Then, it is easy to see that every $w \in W$ is in the $\SL_d \times \SL_d$ orbit of $v_{a,b}$ for an appropriate choice of $a$ and $b$ (indeed, just the eigenvalues of $A^{-1}B$).
Next, observe that $W$ is a full-dimensional semi-algebraic set, indeed it is described by one Zariski-open conditions ($\det(A) \neq 0$) and one inequality (the discriminant of $\det(tI - A^{-1}B)$ is larger than zero).
Thus, $W$ is an Euclidean-open subset (hence, a set of positive Lebesgue measure), and every point in $W$ has a non-compact stabilizer.
\end{itemize}

Finally, case (3) follows from case (2) in view of Lemma~\ref{lem:res.comp} below.
\end{proof}

\begin{lemma}\label{lem:res.comp}
Let $H \subseteq G$ be a closed subgroup of an algebraic group and let $V$ be a rational representation of $G$. Let $v \in V$. If $G_v$ is compact, then so is $H_v$.
\end{lemma}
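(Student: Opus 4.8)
The plan is to reduce the statement to the elementary topological fact that a closed subset of a compact space is compact, after identifying $H_v$ with an intersection.

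First I would observe that $H_v$ is literally the intersection $H \cap G_v$ inside $G$. Indeed, for $h \in H$ we have $h \in H_v$ if and only if $h \cdot v = v$, and the latter says precisely that $h \in G_v$; since $h$ already lies in $H$, this gives $H_v = H \cap G_v$. Next I would use that $G_v$ is closed in $G$: it is the preimage of the closed set $\{v\}$ under the continuous orbit map $g \mapsto g \cdot v$, so it is closed (in the Euclidean topology, which is the relevant one since we invoke this lemma for $\F=\R$ in the proof of Theorem~\ref{thm:main}). By hypothesis $H$ is also closed in $G$, so $H \cap G_v$ is a closed subset of $G$, and in particular a closed subset of $G_v$. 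As $G_v$ is assumed compact, its closed subset $H_v = H \cap G_v$ is compact as well, which is exactly the claim.

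There is essentially no obstacle here; the only point requiring a word of care is the topology, in the same spirit as the clarification made after Corollary~\ref{lem:castle compact}. Since $H_v$ and $G_v$ are stabilizer subgroups, they are closed and hence \emph{embedded} Lie subgroups, for which the subspace topology inherited from $G$ coincides with the inherent Lie group topology; thus compactness of $H_v$ is unambiguous and the argument above applies without modification. The identical reasoning shows that if $G_v$ is finite (the $\F=\C$ analogue) then $H_v$ is finite, though we only need the compact case.
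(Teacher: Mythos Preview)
Your proof is correct and follows exactly the same approach as the paper: the paper's proof is the single line ``$H_v = G_v \cap H$ is a closed subset of $G_v$ and hence compact if $G_v$ is compact.'' You have simply added (reasonable but unnecessary) justification for why $G_v$ is closed and the remark on topologies.
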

\begin{proof}
$H_v = G_v \cap H$ is a closed subset of $G_v$ and hence compact if $G_v$ is compact.
\end{proof}

We end this section with a proof of Corollary~\ref{cor:threshold}.

\begin{proof}[Proof of Corollary~\ref{cor:threshold}]
Since Theorem~\ref{thm:main} does not differentiate between $\F = \R$ and $\F = \C$, it suffices to prove this in the case of $\F = \C$.
Here, statistical notions correspond precisely to stability notions by Corollary~\ref{cor:mle-gen.stable}, so we will make our arguments in the language of stability.
First, observe that $\lceil r \rceil \leq \mlt_b (= \mlt_e)$ because $\rho_{d_1,\dots,d_k;m}$ is unstable for unless $m \geq r$ by part~$(1)$ of Theorem~\ref{thm:main-inv}.

Now, let $c = \lceil r \rceil$, so $d_k = c d_1\cdots d_{k-1} - \alpha$ for some $0 \leq \alpha < d_1d_2\cdots d_{k-1}$.
To show $\mlt_u \leq c+1$, it suffices to show that $\rho_{d_1,\dots,d_k,c+1}$ is generically stable by Lemma~\ref{lem.increase}.

We see that $\rho_{d_1,\dots,d_k;c+1}$ is castling equivalent to $\rho_{d_1,\dots,d_{k-1},d_1d_2\cdots d_{k-1} + \alpha;c+1}$. It suffices to show that one of them is generically stable. Observe that both $A = d_k$ and $B = d_1d_2\cdots d_{k-1} + \alpha$ are larger than $d_{k-1}$, so the dimensions are already in order. Since $A + B = (c+1)d_1 \cdots d_{k-1}$, we get that either $A$ or $B$ is $\leq \frac{1}{2} (c+1)d_1\cdots d_{k-1}$. Hence, we get generic stability for $\rho_{d_1,\dots,d_k;c+1}$ by parts~(3) and (4) of Theorem~\ref{thm:recursive} unless $(d_1,\dots,d_k;c+1)$ (or $(d_1,\dots,d_{k-1},B;c+1)$) is one of $(2,d,d;1)$ or $(d,d;2)$.
The former is not possible because $c+1 \geq 2$ and the latter is not possible because $k \geq 3$.
\end{proof}

\section{Dimension of the GIT quotient}\label{sec:git quotient}
In this section, let the underlying field be $\F = \C$.
Let $V$ be a rational representation of a reductive group $G$.
Then, the GIT quotient $\gitPVG$ is defined as ${\rm Proj}(\C[V]^G)$, the projective variety associated to the ring of invariants (with its natural grading).

Given what we have computed, we can also compute the dimension of the GIT quotient for the action of $G = \prod_i \SL_{d_i}$ on $V = \F^{d_1,\dots,d_k;m}$. This relies on Rosenlicht's theorem \cite[Theorem~2]{Rosenlicht} (see also the proof of \cite[Lemma~3.1]{BRVR}).

\begin{theorem}[Rosenlicht]
Let $V$ be a rational representation of a connected semisimple group $G$.
Let $H$ be the stabilizer in general position.
Then, $\dim (\gitPVG) = \dim(\mathbb{P}V) - \dim(G) + \dim(H)$, where $\dim (\gitPVG) = -1$ if and only if $\gitPVG = \emptyset$.
\end{theorem}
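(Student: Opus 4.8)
The plan is to deduce the formula from the classical theorem of Rosenlicht \cite[Theorem~2]{Rosenlicht}, which computes the transcendence degree of the field of rational invariants, together with two bookkeeping steps: comparing the fraction field of the invariant ring with the field of rational invariants, and passing from the affine cone to its projectivization. Throughout I will use that $\C[V]^G$ is a finitely generated domain (Hilbert finiteness for the reductive group $G$), so that $\dim\operatorname{Spec}(\C[V]^G)$ equals the transcendence degree of its fraction field.

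First I would record that, because $G$ is connected semisimple, it has no nontrivial rational characters; in particular $\det\circ\rho$ is trivial, so $\rho(G)\subseteq\SL(V)$ and every one-dimensional rational $G$-module is trivial. The key consequence I will exploit is the field equality $\operatorname{Frac}(\C[V]^G)=\C(V)^G$. Given $f=p/q\in\C(V)^G$, multiplication by $f$ carries the finite-dimensional $G$-submodule $M=\langle G\cdot q\rangle\subseteq\C[V]$ isomorphically onto $N=\langle G\cdot p\rangle\subseteq\C[V]$, and triviality of the character group forces $f$ to be a ratio of genuine invariants (see \cite{Popov-Vinberg}). Consequently $\dim\operatorname{Spec}(\C[V]^G)=\operatorname{trdeg}_\C\operatorname{Frac}(\C[V]^G)=\operatorname{trdeg}_\C\C(V)^G$.

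Next I would invoke Rosenlicht's theorem: since $V$ is irreducible and $G$ is connected, $\operatorname{trdeg}_\C\C(V)^G=\dim V-\dim O$, where $O$ is a generic $G$-orbit. By the definition of the stabilizer in general position $H$ (Subsection~\ref{subsec:sgp}), a generic orbit has dimension $\dim O=\dim G-\dim H$, so that $\dim\operatorname{Spec}(\C[V]^G)=\dim V-\dim G+\dim H$. Finally, $\gitPVG=\operatorname{Proj}(\C[V]^G)$ for the standard grading, and since $\C[V]^G$ is nonnegatively graded with degree-zero part $\C$, one has $\dim\operatorname{Proj}(\C[V]^G)=\dim\operatorname{Spec}(\C[V]^G)-1$ whenever $\C[V]^G\neq\C$, while $\gitPVG=\emptyset$ precisely when $\C[V]^G=\C$, i.e.\ when $\dim\operatorname{Spec}(\C[V]^G)=0$. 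With the convention $\dim\emptyset=-1$, both cases are uniformly captured by $\dim\gitPVG=\dim\operatorname{Spec}(\C[V]^G)-1$. Combining with $\dim\Pp V=\dim V-1$ yields $\dim\gitPVG=\Pp V$-dimension minus $\dim G$ plus $\dim H$, i.e.\ $\dim\gitPVG=\dim\Pp V-\dim G+\dim H$, with $\dim\gitPVG=-1$ if and only if the quotient is empty.

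The main obstacle is the field equality $\operatorname{Frac}(\C[V]^G)=\C(V)^G$ of the second paragraph: without it one obtains only the inequality $\dim\operatorname{Spec}(\C[V]^G)\le\operatorname{trdeg}_\C\C(V)^G$, and the dimension formula can genuinely fail. This is exactly where semisimplicity (triviality of the character group) is essential; already for the reductive group $\C^*$ acting diagonally on $\C^2$ one has $\C[V]^G=\C$ but $\C(V)^G=\C(x/y)$, so the invariant ring has strictly smaller dimension than the codimension of the generic orbit. Everything else—the Hilbert finiteness of $\C[V]^G$, the identification $\dim O=\dim G-\dim H$, and the $\operatorname{Proj}$-versus-$\operatorname{Spec}$ dimension drop—is routine.
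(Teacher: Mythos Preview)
The paper does not give its own proof of this statement: it records the result as Rosenlicht's theorem, citing \cite[Theorem~2]{Rosenlicht} and referring the reader to the proof of \cite[Lemma~3.1]{BRVR}, and then simply uses the formula. So there is no in-paper proof to compare against.

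That said, your derivation is essentially the standard one and is correct. You combine the classical Rosenlicht theorem ($\operatorname{trdeg}_\C \C(V)^G = \dim V - \dim O$ for a generic orbit $O$), the identity $\dim O = \dim G - \dim H$, the field equality $\operatorname{Frac}(\C[V]^G)=\C(V)^G$ for connected semisimple $G$, and the $\operatorname{Proj}$-versus-$\operatorname{Spec}$ dimension drop. This is precisely the argument indicated by the references the paper cites; your discussion of why semisimplicity is essential (via the $\C^*$ on $\C^2$ counterexample) is a nice bonus that the paper does not spell out.
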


For the tensor action, this means that
\begin{align}\label{eq:rosenlicht concrete}
  \dim(\gitPVG) = \Delta(d_1,\dots,d_k;m) + \dim H,
\end{align}
where $\Delta = \Delta(d_1,\dots,d_k;m) = m \prod_{i=1}^k d_i - 1 - \sum_{i=1}^k (d_i^2 - 1)$ as defined above and where $H$ is the stabilizer in general position.

\begin{proof} [Proof of Theorem~\ref{thm:git}]
By Lemma~\ref{lem.cas.inv.ring}, the dimension of the GIT quotient is invariant under castling transforms, so we may assume that $(d_1,\dots,d_k;m)$ is minimal.
We handle each case separately:

\emph{Case (1):} Suppose $R < 0$.
Then $\rho$ is unstable by Theorem~\ref{thm:main-inv}.
This means that the invariant ring is given by $\C[V]^G = \C$ and that $\gitPVG$ is empty.

\emph{Case (2):} Suppose $R = 0$.
Then $m d_1 d_2 \cdots d_{k-1} = d_k$ by Lemma~\ref{lem:R minimal}.
We identify $V \cong \Mat_{d_k,d_k}$.
For the left-right action of $\SL_{d_k} \times \SL_{d_k}$, the ring of invariants is $\C[\det]$, where $\det$ denotes the determinant polynomial.
The same is true when we restrict to the second $\SL_{d_k}$, say.
Since $\{1\} \times \SL_{d_k} \subseteq G \subseteq \SL_{d_k} \times \SL_{d_k}$, the ring of invariants for $\rho$ is also $\C[\det]$.
Thus, $\gitPVG$ is a single point.

\emph{Case (3):} Suppose $R > 0$.
Whenever $\rho$ is generically stable, \eqref{eq:rosenlicht concrete} implies that the dimension of the GIT quotient is $\Delta$ (recall that the kernel of $\rho$ is zero-dimensional), while if $\rho$ is only generically polystable we need to add the dimension of the stabilizer in general position.
There are two cases to consider:
\begin{itemize}
\item $m=1$ and $\Delta = -2$:
In this case, $k=3$ and $(d_1,d_2,d_3) = (2,d,d)$ for some $d\geq 2$, as we saw in the proof of Theorem~\ref{thm:main-inv}.
If $d=2$ then the s.g.p.\ is two-dimensional, while if $d>2$ it is $(d-1)$-dimensional (see proof of Theorem~\ref{thm:recursive}, part~(4)).
Thus, since $\gmax = d$,
\begin{align*}
  \dim(\gitPVG)
= \Delta + \dim H
= - 2 + \begin{lrdcases}
2 & \text{ if } \gmax = 2 \\
\gmax - 1 & \text{ if } \gmax > 2
\end{lrdcases}
= \max(\gmax-3,0),
\end{align*}
which is also contained in \cite[Theorem 1.2]{BRVR}.

\item $m=2$ and $R = \gmax^2 > 1$:
In this case, similarly, $k=2$ and $(d_1,d_2) = (d,d)$ for some $d\geq 2$, again by the proof of Theorem~\ref{thm:main-inv}.
If $d=2$ then the s.g.p.\ is one-dimensional, while if $d>2$ then the s.g.p.~is $(d-1)$-dimensional (see proof of Theorem~\ref{thm:recursive}, part~(4)).
Thus $\dim H = \gmax - 1$ in either case and hence
\begin{equation*}
  \dim(\gitPVG)
= \Delta + \dim H
= 1 + \left( \gmax - 1 \right)
= \gmax. \qedhere
\end{equation*}
\end{itemize}
\end{proof}

\appendix
\section{Projections}\label{app:projection}

Let $\F$ be a field. By an affine $\F$-variety, we mean the zero locus in $\F^m$ of a collection of polynomials in $\F[x_1,\dots,x_m]$.

\begin{lemma}
Suppose $X$ and $Y$ are affine $\F$ varieties, then the projection map $\pi: X \times Y \rightarrow X$ is an open map in the Zariski topology.
\end{lemma}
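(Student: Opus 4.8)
The plan is to reduce to principal open sets and then recast the openness of the image as a purely $\F$-linear statement about finitely many polynomials, thereby avoiding any appeal to algebraic closedness. Since every Zariski-open subset of $X \times Y$ is a union of principal opens $D(f) = \{(x,y) : f(x,y) \neq 0\}$ for $f$ a polynomial function on $X \times Y$, and since the image of a union is the union of the images (so a union of opens), it suffices to prove that $\pi(D(f))$ is open for a single $f$. First I would write $f$ as a finite sum $f(x,y) = \sum_{i=1}^r a_i(x)\, b_i(y)$ with $a_i \in \F[X]$ and $b_i \in \F[Y]$, which is possible by grouping a representing polynomial according to its monomials in the $Y$-variables. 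For a fixed $x \in X$ the function $y \mapsto f(x,y)$ is the element $f_x \coloneqq \sum_i a_i(x)\, b_i$ of the coordinate ring $\F[Y]$, and a point $x$ fails to lie in $\pi(D(f))$ precisely when $f_x$ vanishes identically on $Y$, i.e.\ when $f_x = 0$ in $\F[Y]$. Here it is important that elements of $\F[Y]$ \emph{are} functions on $Y$, so this is genuine vanishing and requires no hypothesis on $\F$. Thus $X \setminus \pi(D(f)) = \{x \in X : \sum_i a_i(x)\, b_i = 0 \text{ in } \F[Y]\}$, and the task is to show this set is Zariski-closed in $X$.

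The heart of the argument is that the relation $\sum_i c_i b_i = 0$ carves out a \emph{linear} subspace $L \subseteq \F^r$ defined by finitely many linear forms with coefficients in $\F$. Indeed, the $\F$-linear map $\F^r \to \F[Y]$, $(c_i) \mapsto \sum_i c_i b_i$, has image the finite-dimensional span of $b_1,\dots,b_r$; after reindexing, let $b_1,\dots,b_s$ be a maximal $\F$-linearly independent subset and write $b_{s+k} = \sum_{j=1}^s \lambda_{kj} b_j$ with $\lambda_{kj} \in \F$. Then $\sum_i c_i b_i = \sum_{j=1}^s \bigl(c_j + \sum_k \lambda_{kj} c_{s+k}\bigr) b_j$, which vanishes if and only if each coefficient does, giving $s$ linear equations $\ell_t(c_1,\dots,c_r) = 0$ over $\F$ cutting out $L$. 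Substituting $c_i = a_i(x)$ yields $X \setminus \pi(D(f)) = \{x \in X : \ell_t(a_1(x),\dots,a_r(x)) = 0,\ 1 \leq t \leq s\}$. Since each $\ell_t(a_1,\dots,a_r)$ is an $\F$-linear combination of the $a_i$ and hence lies in $\F[X]$, this is the common zero locus of finitely many elements of $\F[X]$, so it is Zariski-closed and $\pi(D(f))$ is open.

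The one point that requires genuine care, and which I expect to be the main obstacle conceptually, is the insistence on an \emph{arbitrary} base field $\F$: one must resist the standard routes through generic flatness or a Chevalley-style fiber-dimension count over $\overline{\F}$, since these implicitly use algebraic closedness. The argument above sidesteps this entirely by working only with $\F$-linear relations among the finitely many functions $b_i \in \F[Y]$, all of which live over $\F$. With that observation in place, the remaining steps—reduction to principal opens, expanding $f$, and identifying the complement of the image as a vanishing locus—are routine.
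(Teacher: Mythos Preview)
Your proof is correct but takes a somewhat different route from the paper's. The paper does not reduce to principal opens or perform linear algebra in $\F[Y]$; instead, for an open set $U = \mathbb{V}(p_1,\dots,p_t)^c$ in $X \times Y$, it directly computes $X \setminus \pi(U) = \mathbb{V}\bigl(\{p_{i,b}\}_{b \in Y,\, 1 \leq i \leq t}\bigr)$, where $p_{i,b} \coloneqq p_i(-,b) \in \F[X]$ is the polynomial obtained by freezing the second argument at a point $b \in Y$, and notes that the common zero locus of any family of polynomials is closed (Hilbert's basis theorem is then invoked to pass to a finite subfamily, though this is not actually needed for openness). Your argument is longer but more constructive: by writing $f = \sum_i a_i(x)\, b_i(y)$ and extracting a maximal $\F$-linearly independent subset among the images of the $b_i$ in $\F[Y]$, you exhibit an explicit finite list of regular functions on $X$ cutting out the complement of $\pi(D(f))$. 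Both arguments ultimately rest on the same observation---that the vanishing of $f_x$ on $Y$ is an $\F$-linear condition on the coefficients $a_i(x)$---and neither uses algebraic closedness. The paper's version is terser and parametrizes the defining equations by points of $Y$; yours makes the finiteness concrete without appealing to Noetherianity.
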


\begin{proof}
We have $X = \mathbb{V}(f_1,\dots,f_r)$ and $Y = \mathbb{V}(g_1,\dots,g_s)$. Now, suppose $U = \mathbb{V}(p_1,\dots,p_t)^c$ is a Zariski-open subset of $X \times Y$. Then
\begin{align*}
\pi(U) &= \{a\ |\ \exists b \in Y: (a,b) \in U\} \\
& = \{a \ |\ \exists b \in Y, 1 \leq i \leq t: p_i(a,b) \neq 0\} \\
& = \mathbb{V}(\{p_{i,b}\}_{b \in Y, 1 \leq i \leq t})^c,
\end{align*}
where $p_{i,b} = p_i(-,b)$. Thus $\pi(U)$ is Zariski-open. Note that even though $p_{i,b}$ is an infinite collection of polynomials, one can extract a finite subset with the same zero locus by the Hilbert Basis Theorem.
\end{proof}

\end{document}